\newcommand{\email}[1]{\href{mailto:#1}{#1}}
\newtheorem{theorem}{Theorem}
\newtheorem{proposition}[theorem]{Proposition}
\newtheorem{lemma}[theorem]{Lemma}
\theoremstyle{remark}
\newtheorem{remark}[theorem]{Remark}
\theoremstyle{definition}
\newcommand{\st}{\,:\,}
\newcommand{\Real}{\mathbb{R}}
\DeclareRobustCommand{\bvec}[1]{\boldsymbol{#1}}
  \renewcommand{\bvec}[1]{#1}%
\newcommand{\uvec}[1]{\underline{\bvec{#1}}}
\newcommand{\cvec}[1]{\bvec{\mathcal{#1}}}
\DeclareRobustCommand{\btens}[1]{\boldsymbol{#1}}
  \renewcommand{\btens}[1]{#1}%
\newcommand{\ctens}[1]{\bvec{\mathcal{#1}}}
\newcommand{\Id}{\btens{I}_d}
\DeclareMathOperator{\card}{card}
\DeclareMathOperator{\tr}{tr}
\newcommand{\GRAD}{\boldsymbol{\nabla}}
\newcommand{\GRADs}{\GRAD_{\rm s}}
\newcommand{\DIV}{\boldsymbol{\nabla}{\cdot}}
\newcommand{\VDIV}{\boldsymbol{\nabla}{\cdot}}
\newcommand{\Hdiv}[1]{\bvec{H}(\operatorname{div};#1)}
\newcommand{\HZdiv}[1]{\bvec{H}_0(\operatorname{div};#1)}
\newcommand{\compl}{{\rm c}}
\newcommand{\Poly}[1]{\mathcal{P}^{#1}}
\newcommand{\vPoly}[1]{\cvec{P}^{#1}}
\newcommand{\tPoly}[1]{\ctens{P}^{#1}}
\newcommand{\Goly}[1]{\cvec{G}^{#1}}
\newcommand{\cGoly}[1]{\cvec{G}^{\compl,#1}}
\newcommand{\lproj}[2]{\pi_{\mathcal{P},#2}^{#1}}
\newcommand{\vlproj}[2]{\bvec{\pi}_{\cvec{P},#2}^{#1}}
\newcommand{\tlproj}[2]{\btens{\pi}_{\cvec{P},#2}^{#1}}
\newcommand{\Gproj}[2]{\bvec{\pi}_{\cvec{G},#2}^{#1}}
\newcommand{\elements}[1]{\mathcal{T}_{#1}}
\newcommand{\faces}[1]{\mathcal{F}_{#1}}
\newcommand{\FT}{\faces{T}}
\newcommand{\normal}{\bvec{n}}
\newcommand{\Mh}{\mathcal{M}_h}
\newcommand{\Th}{\elements{h}}
\newcommand{\Fh}{\faces{h}}
\newcommand{\boundary}{{\rm b}}
\newcommand{\Fhb}{\faces{h}^\boundary}
\newcommand{\norm}[2]{\|#2\|_{#1}}
\newcommand{\seminorm}[2]{|#2|_{#1}}
\newcommand{\vvvert}{\vert\kern-0.25ex\vert\kern-0.25ex\vert}
\newcommand{\term}{\mathfrak{T}}
\newcommand{\Cf}[1]{C_{{\rm f},#1}}
\newcommand{\hCf}[1]{\widehat{C}_{{\rm f},#1}}
\newcommand{\Uh}{\uvec{U}_h^k}
\newcommand{\UT}{\uvec{U}_T^k}
\newcommand{\UhD}{\uvec{U}_{h,0}^k}
\newcommand{\Ph}{P_h^k}
\newcommand{\Xdiv}[1]{\underline{\bvec{X}}_{\operatorname{div},#1}^k}
\newcommand{\Ih}{\uvec{I}_h^k}
\newcommand{\IT}{\uvec{I}_T^k}
\newcommand{\stokes}{\mathrm{S}}
\newcommand{\darcy}{\mathrm{D}}
\newcommand{\GT}{\btens{G}_T^k}
\newcommand{\PST}{\bvec{P}_{\stokes,T}^{k+1}}
\newcommand{\DT}{D_T^k}
\newcommand{\PDT}{\bvec{P}_{\darcy,T}^k}
\newcommand{\tPDT}{\widetilde{\bvec{P}}_{\darcy,T}^k}
\newcommand{\Err}{\mathcal{E}_h^k}
\newcommand{\Errsto}{\mathcal{E}_{\stokes,h}^k}
\newcommand{\Errdar}{\mathcal{E}_{\darcy,h}^k}
\newcommand{\Errc}{\mathcal{E}_{\mathrm{c},h}^k}
\newcommand{\Errrhs}{\mathcal{E}_{\mathrm{rhs},h}^k}
\newcommand{\tv}[1]{\langle#1\rangle}
\newcommand{\regT}{\lambda_T}
\begin{document}

\title{A polytopal method for the Brinkman problem robust in all regimes}
\author[1]{Daniele A. Di Pietro}
\affil[1]{IMAG, Univ Montpellier, CNRS, Montpellier, France, \email{daniele.di-pietro@umontpellier.fr}}
\author[2]{J\'{e}r\^{o}me Droniou}
\affil[2]{School of Mathematics, Monash University, Melbourne, Australia, \email{jerome.droniou@monash.edu}}

\maketitle

\begin{abstract}
  In this work we develop a discretisation method for the Brinkman problem that is uniformly well-behaved in all regimes (as identified by a local dimensionless number with the meaning of a friction coefficient) and supports general meshes as well as arbitrary approximation orders.
  The method is obtained combining ideas from the Hybrid High-Order and Discrete de Rham methods, and its robustness rests on a potential reconstruction and stabilisation terms that change in nature according to the value of the local friction coefficient.
  We derive error estimates that, thanks to the presence of cut-off factors, are valid across all the regimes and provide extensive numerical validation.
  \smallskip\\
  \textbf{MSC:} 65N30, 65N08, 76S05, 76D07
  \smallskip\\
  \textbf{Key words:} Brinkman, Darcy, Stokes, Hybrid High-Order methods, Discrete de Rham methods
\end{abstract}



\section{Introduction}

The Brinkman problem governs the flow of a viscous fluid in an inhomogeneous material where fractures, bubbles, or channels are present within a porous matrix.
Mathematically, this problem translates into a system of partial differential equations with saddle-point structure which can be regarded as a superposition of the Stokes and Darcy systems.
As pointed out in \cite{Mardal.Tai.ea:02}, the construction of finite element approximations that are uniformly well-behaved across the entire range of (Stokes- or Darcy-dominated) regimes is not straightforward; a representative, but by far not exhaustive, list of references is \cite{Burman.Hansbo:05,Burman.Hansbo:07,Juntunen.Stenberg:10,Konno.Stenberg:11,Anaya.Gatica.ea:15,Evans.Hughes:13,Caceres.Gatica.ea:17,Vacca:18,Araya.Harder.ea:17,Zhao.Chung.ea:20}.
In \cite{Botti.Di-Pietro.ea:18}, we introduced a numerical method for the Brinkman problem on matching simplicial meshes and derived what appears to be the first error estimate accounting for the local regime through a dimensionless number which can be interpreted as a friction coefficient.
Thanks to the presence of cutoff factors, this error estimate holds in all situations, including the Stokes problem as well as the singular limit corresponding to the pure Darcy problem.

In this work, we provide a positive answer to an open question left in the above reference, namely whether similar robustness features and error estimates can be obtained on general polytopal meshes.
As for the original method of \cite{Botti.Di-Pietro.ea:18}, the discretisation of the Stokes term is inspired by Hybrid High-Order (HHO) methods \cite{Di-Pietro.Ern.ea:14,Di-Pietro.Ern:15,Di-Pietro.Droniou:20} while, for the Darcy and forcing terms, a novel construction inspired by discrete de Rham methods \cite{Di-Pietro.Droniou.ea:20,Di-Pietro.Droniou:21*1} (see also \cite{Di-Pietro.Ern:17} for an antecedent) replaces the one based on the Raviart--Thomas--N\'ed\'elec space \cite{Raviart.Thomas:77,Nedelec:80}.
The first central element in this construction is a discrete vector potential that changes in nature depending on the value of the local friction coefficient.
The other key ingredient are regime-dependent stabilisation terms.
Thanks to these novel tools, we are able to derive a robust estimate of the adjoint error for the discrete divergence, which is the pivot result for the extension of the techniques of \cite{Botti.Di-Pietro.ea:18} to polytopal meshes.
The resulting error estimate, stated in Theorem~\ref{thm:error.estimate} below, is valid on the entire range of values of the local friction coefficient, from $0$ (pure Stokes) to $+\infty$ (pure Darcy).

The rest of the work is organised as follows.
In Section~\ref{sec:setting} we briefly recall the continuous and discrete settings.
In Section~\ref{sec:scheme} we formulate the numerical scheme and state the main stability and convergence results.
Extensive numerical validation of these results on a variety of meshes and regimes for analytical solutions is provided in Section~\ref{sec:numerical.tests}, where a more physical three-dimensional test case is also considered.
Finally, the proofs of the main results are collected in Section~\ref{sec:analysis}.


\section{Setting}\label{sec:setting}

\subsection{Continuous problem}

Let $\Omega\subset\Real^d$, $d\in\{2,3\}$, denote a bounded connected open polytopal (i.e., polygonal if $d=2$ and polyhedral if $d=3$) domain with boundary $\partial\Omega$.
For the sake of simplicity, and without loss of generality, we assume that $\Omega$ has unit diameter.
Let two functions $\mu:\Omega\to\Real$ and $\nu:\Omega\to\Real$ be given.
In what follows, we assume that there exist real numbers $\underline{\mu},\overline{\mu}$, and $\overline{\nu}$ such that, almost everywhere in $\Omega$,
\begin{equation}\label{eq:mu.nu:bounds}
  0<\underline{\mu}\le\mu\le\overline{\mu},\qquad
  0\le\nu\le\overline{\nu}.
\end{equation}
Let $\bvec{f}:\Omega\to\Real^d$ and $g:\Omega\to\Real$ denote volumetric source terms.
The Brinkman problem reads:
Find the velocity $\bvec{u}:\Omega\to\Real^d$ and the pressure $p:\Omega\to\Real$ such that
\begin{subequations}\label{eq:strong}
  \begin{alignat}{2}
    \label{eq:strong:momentum}
    -\VDIV(\mu\GRAD\bvec{u}) + \nu\bvec{u} + \GRAD p &= \bvec{f} &\qquad&\text{in $\Omega$},
    \\ \label{eq:strong:mass}
    \DIV\bvec{u} &= g &\qquad&\text{in $\Omega$},
    \\ \label{eq:bc}
    \bvec{u} &= \bvec{0} &\qquad&\text{on $\partial\Omega$},
    \\ \label{eq:zero.mean.pressure}
    \int_\Omega p &= 0.
  \end{alignat}
\end{subequations}
A few simplifications are made to make the exposition more compact while retaining all the difficulties related to the robustness across the entire range of values of $\mu$ and $\nu$.
First of all, in \eqref{eq:strong:momentum} we have considered a viscous term expressed in terms of the full gradient instead of its symmetric part $\GRADs$.
The modifications to replace $\GRAD$ with $\GRADs$ are standard in the HHO literature; see, e.g., \cite{Di-Pietro.Ern:15,Botti.Di-Pietro.ea:19*3} and \cite[Chapter~7]{Di-Pietro.Droniou:20}.
Second, we assume henceforth that both $\mu$ and $\nu$ are piecewise constant on a polytopal partition $P_\Omega$ of the domain.
The extension to coefficients that vary smoothly inside each element, and are possibly full tensors, is also standard; see, in particular, \cite[Section~4.2]{Di-Pietro.Droniou:20}.

\subsection{Discrete setting}

\subsubsection{Mesh and notation for inequalities up to a constant}

We consider polytopal meshes $\Mh\coloneq\Th\cup\Fh$ matching the geometrical requirements detailed in \cite[Definition 1.4]{Di-Pietro.Droniou:20}, with $\Th$ set of elements and $\Fh$ set of faces.
To avoid dealing with jumps of the problem coefficients $\mu$ and $\nu$ inside mesh elements, we additionally assume that $\Th$ is compatible with $P_\Omega$, meaning that, for each $T\in\Th$, there exists $\omega\in P_\Omega$ such that $T\subset\omega$.
We then set $\mu_T\coloneq \mu_{|T}$ and $\nu_T\coloneq \nu_{|T}$ for all $T\in\Th$, noticing that these constant values are uniquely defined in each element.
For any $Y\in\Mh$, we denote by $h_Y$ its diameter, so that $h=\max_{T\in\Th}h_T>0$.
For every mesh element $T\in\Th$, we denote by $\FT$ the subset of $\Fh$ containing the faces that lie on the boundary $\partial T$ of $T$.
For any mesh face $F\in\Fh$, we fix once and for all a unit normal vector $\normal_F$ and, for any mesh element $T\in\Th$ such that $F\in\FT$, we let $\omega_{TF}\in\{-1,+1\}$ denote the orientation of $F$ relative to $T$, selected so that $\omega_{TF}\normal_F$ points out of $T$.
Boundary faces lying on $\partial\Omega$  are collected in the set $\Fhb$.

Our focus being on the $h$-convergence analysis, we assume that $\Mh$ belongs to a sequence of refined polytopal meshes that is regular in the sense of \cite[Definition 1.9]{Di-Pietro.Droniou:20}.
This implies, in particular, that the number of faces of each mesh element is bounded from above by an integer independent of $h$; see \cite[Lemma 1.12]{Di-Pietro.Droniou:20}.

From this point on, $a \lesssim b$ means $a\le Cb$ with $C$ only depending on $\Omega$, the mesh regularity parameter, and the polynomial degree $k$ of the scheme defined in Section~\ref{sec:scheme}.
We stress that this means, in particular, that $C$ is independent of the problem parameters $\mu$ and $\nu$.

\subsubsection{Polynomial spaces}

Given $Y\in\Th\cup\Fh$ and an integer $m\ge 0$, we denote by $\Poly{m}(Y)$ the space spanned by the restriction to $Y$ of $d$-variate polynomials of total degree $\le m$.
The symbols $\vPoly{m}(Y;\Real^d)$ and $\tPoly{m}(Y;\Real^{d\times d})$ respectively denote the sets of vector- and tensor-valued functions over $Y$ whose components are in $\Poly{m}(Y)$.
For $T\in\Th$, we will need the following direct decomposition of $\vPoly{m}(T;\Real^d)$ (see, e.g., \cite[Corollary 7.4]{Arnold:18}):
\[
  \vPoly{m}(T;\Real^d)
  = \Goly{m}(T) \oplus \cGoly{m}(T),
  \]
  with
  \begin{equation}\label{eq:Goly.cGoly}
    \text{%
      $\Goly{m}(T)\coloneq\GRAD\Poly{m+1}(T)$\quad and\quad
      $\cGoly{m}(T)\coloneq\begin{cases}
      (\bvec{x} - \bvec{x}_T)^\bot\Poly{m-1}(T) & \text{if $d=2$},
      \\
      (\bvec{x} - \bvec{x}_T)\times\vPoly{m-1}(T;\Real^3) & \text{if $d=3$},
      \end{cases}$
    }
  \end{equation}
where $\bvec{x}_T$ is a point such that $T$ is star-shaped with respect to a ball centered at $\bvec{x}_T$ and of radius $r_T$ such that $h_T\lesssim r_T$ and, in the case $d=2$, for any $\bvec{v}\in\Real^2$ we denote by $\bvec{v}^\bot$ the vector obtained rotating $\bvec{v}$ by $-\frac\pi2$ radians.
Given a polynomial (sub)space $\mathcal{X}^m(Y)$ on $Y\in\Th\cup\Fh$, the corresponding $L^2$-orthogonal projector is denoted by $\pi_{\mathcal{X},Y}^m$.
Boldface fonts will be used when the elements of $\mathcal{X}^m(Y)$ are vector-valued.
The set of broken polynomials of total degree $\le m$ on the mesh is denoted by $\Poly{m}(\Th)$, and the corresponding $L^2$-orthogonal projector by $\lproj{m}{h}$.

\subsubsection{Local friction coefficient}

The regime inside each mesh element $T\in\Th$ is identified by the following dimensionless number, which can be interpreted as a friction coefficient:
\begin{equation}\label{eq:CfT}
  \Cf{T}\coloneq\frac{\nu_T h_T^2}{\mu_T}.
\end{equation}
Elements for which $\Cf{T}<1$ are in the Stokes-dominated regime, while elements for which $\Cf{T}\ge1$ are in the Darcy-dominated regime.
The values $\Cf{T} = 0$ and $\Cf{T} = +\infty$ correspond to pure Stokes and pure Darcy, respectively.
Notice that $\Cf{T} = +\infty$ is a singular limit which, despite requiring to modify the continuous formulation \eqref{eq:strong}, can be handled seamlessly by the method developed in the next section; see Remark~\ref{rem:pure.darcy} below.


\section{A robust numerical scheme for the Brinkman problem}\label{sec:scheme}

\subsection{Spaces}

Let an integer $k\ge 0$ be fixed.
We define the following HHO space:
\begin{multline*}
  \Uh\coloneq\Big\{
  \uvec{v}_h = \big(
  (\bvec{v}_T)_{T\in\Th}, (\bvec{v}_F)_{F\in\Fh}
  \big):
  \\
  \text{%
    $\bvec{v}_T\in\vPoly{k}(T;\Real^d)$ for all $T\in\Th$
    and $\bvec{v}_F\in\vPoly{k}(F;\Real^d)$ for all $F\in\Fh$%
  }
  \Big\}.
\end{multline*}
The meaning of the polynomial components in $\Uh$ is provided by the interpolator $\Ih:\bvec{H}^1(\Omega;\Real^d)\to\Uh$ such that, for all $\bvec{v}\in\bvec{H}^1(\Omega;\Real^d)$,
\begin{equation}\label{eq:def.Ih}
  \Ih\bvec{v}\coloneq\big(
  (\vlproj{k}{T}\bvec{v})_{T\in\Th},
  (\vlproj{k}{F}\bvec{v})_{F\in\Th},
  \big)\in\Uh,
\end{equation}
where it is understood that $L^2$-orthogonal projectors are applied to restrictions or traces as needed.
The restrictions of $\Uh$, $\uvec{v}_h\in\Uh$, and $\Ih$ to a mesh element $T$, respectively denoted by $\UT$, $\uvec{v}_T\in\UT$, and $\IT$, are obtained collecting the components attached to $T$ and its faces.

In what follows, given a logical proposition $P$, we denote by $\tv{P}$ its truth value such that
\begin{equation}\label{eq:tv}
  \tv{P} \coloneq \begin{cases}
    0 & \text{if $P$ is false,}
    \\
    1 & \text{if $P$ is true.}
  \end{cases}
\end{equation}
We define the following $L^2$-like product in $\Uh$:
For all $(\uvec{w}_h,\uvec{v}_h)\in\Uh\times\Uh$,
\begin{equation}\label{eq:prod.U}
  \begin{gathered}
    (\uvec{w}_h, \uvec{v}_h)_{\bvec{U},h}
    \coloneq\sum_{T\in\Th}(\uvec{w}_T, \uvec{v}_T)_{\bvec{U},T}%
    \quad\text{with}\quad%
    \\
    (\uvec{w}_T, \uvec{v}_T)_{\bvec{U},T}
    \coloneq\regT\int_T\bvec{w}_T\cdot\bvec{v}_T
    + h_T\sum_{F\in\FT}\tv{\text{$\Cf{T}<1$ or $F\not\in\Fhb$}}\int_F\bvec{w}_F\cdot\bvec{v}_F,
  \end{gathered}
\end{equation}
where $\regT$ is a factor depending only on the regularity of the element $T$ (but independent of its diameter $h_T$) and selected so as to balance out the element and face contributions in $(\cdot,\cdot)_{\bvec{U},T}$ (see Section \ref{sec:numerical.tests}); in particular, we have $1\lesssim \lambda_T\lesssim 1$.
The corresponding local and global seminorms are obtained setting, for $\bullet\in\Th\cup\{h\}$,
\begin{equation}\label{eq:norm.U}
  \norm{\bvec{U},\bullet}{\uvec{v}_\bullet}\coloneq (\uvec{v}_\bullet, \uvec{v}_\bullet)_{\bvec{U},\bullet}^{\nicefrac12}.
\end{equation}
The following boundedness property of the interpolator in the $\norm{\bvec{U},h}{{\cdot}}$-norm follows from the definition of this norm along with the uniform boundedness of the $L^2$-orthogonal projectors $\vlproj{k}{Y}$, $Y\in\Mh$, and continuous trace inequalities (cf.\ \cite[Lemma~1.31]{Di-Pietro.Droniou:20}):
For all $T\in\Th$ and all $\bvec{v}\in\bvec{H}^1(T;\Real^d)$,
\begin{equation}\label{eq:boundedness:IT:norm.U}
  \norm{\bvec{U},T}{\IT\bvec{v}}\lesssim
  \norm{\bvec{L}^2(T;\Real^d)}{\bvec{v}} + h_T\seminorm{\bvec{H}^1(T;\Real^d)}{\bvec{v}}.
\end{equation}

The velocity and pressure spaces, respectively incorporating the boundary and zero-average conditions, are
\[
\UhD\coloneq\left\{
\uvec{v}_h\in\Uh\st\text{%
  $\bvec{v}_F = \bvec{0}$ for all $F\in\Fhb$
}
\right\},\qquad
\Ph\coloneq\Poly{k}(\Th)\cap L^2_0(\Omega),
\]
where, as usual, $L^2_0(\Omega)=\left\{q\in L^2(\Omega)\,:\,\int_\Omega q=0\right\}$.

\begin{remark}[Boundary degrees of freedom]
  Note that the degrees of freedom on the boundary faces of a vector in $\Uh$ may not be controlled by the seminorms $\norm{\bvec{U},\bullet}{{\cdot}}$. This is, however, not an issue as the final problem will be set on $\UhD$ (see also Remark \ref{rem:pure.darcy} for the handling of boundary values in the limiting case of the pure Darcy problem).
\end{remark}

\subsection{Viscous term}

Let $T\in\Th$ be fixed.
For the discretisation of the viscous term, we define the \emph{discrete gradient} $\GT:\UT\to\tPoly{k}(T;\Real^{d\times d})$ and the \emph{Stokes potential} $\PST:\UT\to\vPoly{k+1}(T;\Real^d)$ such that, for all $\uvec{v}_T\in\UT$,
\begin{equation}\label{eq:GT}
  \int_T\GT\uvec{v}_T:\btens{\tau}
  = -\int_T\bvec{v}_T\cdot\VDIV\btens{\tau}
  + \sum_{F\in\FT}\omega_{TF}\int_F\bvec{v}_F\cdot\btens{\tau}\normal_F
  \qquad
  \forall\btens{\tau}\in\tPoly{k}(T;\Real^{d\times d}),
\end{equation}
and
\begin{equation}\label{eq:PST}
  \GRAD\PST\uvec{v}_T = \Gproj{k}{T}\GT\uvec{v}_T,\qquad
  \int_T\PST\uvec{v}_T = \int_T\bvec{v}_T,
\end{equation}
with $\Gproj{k}{T}$ applied to tensor-valued fields acting row-wise.
Likewise, in the formulas above, $\VDIV$ and $\GRAD$ are understood to act row-wise.

The Stokes term in \eqref{eq:strong:momentum} is discretised through the bilinear form $a_{\mu,h}:\Uh\times\Uh\to\Real$ such that, for all $(\uvec{w}_h, \uvec{v}_h)\in\Uh\times\Uh$,
\begin{equation}\label{eq:a.mu.h}
  a_{\mu,h}(\uvec{w}_h, \uvec{v}_h)\coloneq\sum_{T\in\Th}\mu_T a_{\stokes,T}(\uvec{w}_T, \uvec{v}_T),
\end{equation}
where, for all $T\in\Th$,
\begin{equation}\label{eq:a.stokes.T}
  a_{\stokes,T}(\uvec{w}_T, \uvec{v}_T)
  \coloneq\int_T\GT\uvec{w}_T:\GT\uvec{v}_T
  + \frac{\min(1,\Cf{T}^{-1})}{h_T^2}(
  \uvec{w}_T - \IT\PST\uvec{w}_T,
  \uvec{v}_T - \IT\PST\uvec{v}_T
  )_{\bvec{U},T}.
\end{equation}

\begin{remark}[Stabilisation]
  In the above local bilinear form, the stabilisation contribution penalises separately the element difference $\bvec{w}_T - \vlproj{k}{T}\PST\uvec{w}_T$ and face differences $\bvec{w}_F - \vlproj{k}{F}\PST\uvec{w}_T$, $F\in\FT$, respectively through the first and second term in $(\cdot,\cdot)_{\bvec{U},T}$ (cf.~\eqref{eq:prod.U}).
  This is akin to what is often done in Virtual Element methods (see, e.g., \cite{Beirao-da-Veiga.Brezzi.ea:13} for an introduction), whereas the original HHO stabilisation of \cite{Di-Pietro.Ern.ea:14} involves penalisations at faces only; see \cite[Section 2.1.4]{Di-Pietro.Droniou:20} for a broader discussion on this subject. 
  Notice, however, the special treatment of boundary faces in the bilinear form $(\cdot,\cdot)_{\bvec{U},T}$, which are present only for Stokes-dominated regions: this ensures the applicability and robustness of the scheme in the pure Darcy limit (see Remark \ref{rem:pure.darcy} below), and justifies the choice of this particular bilinear form to define the stabilisation terms for both the Stokes and Darcy terms (see \eqref{eq:a.darcy.T} below).
\end{remark}

We define the following induced seminorms: For all $\uvec{v}_h\in\Uh$,
\begin{equation}\label{eq:norm.mu.stokes}
  \norm{\mu,h}{\uvec{v}_h}\coloneq a_{\mu,h}(\uvec{v}_h,\uvec{v}_h)^{\nicefrac12}
  \quad\text{and}\quad
  \text{
    $\norm{\stokes,T}{\uvec{v}_T}\coloneq a_{\stokes,T}(\uvec{v}_T,\uvec{v}_T)^{\nicefrac12}$ for all $T\in\Th$.
  }
\end{equation}

\begin{lemma}[Norm equivalence]\label{lem:hho.stabilisation}
  For all $T\in\Th$ and all $\uvec{v}_T\in\UT$, it holds
  \begin{equation}\label{eq:seminorm.1.T:upper.bound}
    \norm{\stokes,T}{\uvec{v}_T}^2
    \lesssim
    \norm{\btens{L}^2(T;\Real^{d\times d})}{\GRAD\bvec{v}_T}^2
    + \frac{1}{h_T}\sum_{F\in\FT}\norm{\bvec{L}^2(F;\Real^d)}{\bvec{v}_T - \bvec{v}_F}^2.
  \end{equation}
  Assuming, moreover, $\Cf{T}< 1$, we also have
  \begin{equation}\label{eq:seminorm.1.T:lower.bound}
    \norm{\btens{L}^2(T;\Real^{d\times d})}{\GRAD\bvec{v}_T}^2
    + \frac{1}{h_T}\sum_{F\in\FT}\norm{\bvec{L}^2(F;\Real^d)}{\bvec{v}_T - \bvec{v}_F}^2
   \lesssim \norm{\stokes,T}{\uvec{v}_T}^2.
  \end{equation}
    
\end{lemma}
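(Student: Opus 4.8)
The plan is to reduce both inequalities to standard HHO norm-equivalence results by carefully tracking the $\Cf{T}$-dependent cut-off factors that enter through $(\cdot,\cdot)_{\bvec{U},T}$ and through the $\min(1,\Cf{T}^{-1})$ weight in $a_{\stokes,T}$. The starting observation is that, by the definition \eqref{eq:a.stokes.T} of $\norm{\stokes,T}{\cdot}$, we have
\[
  \norm{\stokes,T}{\uvec{v}_T}^2
  = \norm{\btens{L}^2(T;\Real^{d\times d})}{\GT\uvec{v}_T}^2
  + \frac{\min(1,\Cf{T}^{-1})}{h_T^2}\norm{\bvec{U},T}{\uvec{v}_T - \IT\PST\uvec{v}_T}^2.
\]
For the first term I would invoke the boundedness of $\GT$: testing \eqref{eq:GT} with $\btens{\tau}=\GT\uvec{v}_T$, integrating the first term by parts back, and using discrete trace/inverse inequalities (valid under mesh regularity) gives $\norm{\btens{L}^2(T)}{\GT\uvec{v}_T}^2 \lesssim \norm{\btens{L}^2(T)}{\GRAD\bvec{v}_T}^2 + h_T^{-1}\sum_{F\in\FT}\norm{\bvec{L}^2(F)}{\bvec{v}_T-\bvec{v}_F}^2$; this is the classical estimate and is in fact an equality-up-to-constants, so it serves both \eqref{eq:seminorm.1.T:upper.bound} and \eqref{eq:seminorm.1.T:lower.bound}.

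For the stabilisation term in \eqref{eq:seminorm.1.T:upper.bound}, the key steps are: (i) bound the weight by $\min(1,\Cf{T}^{-1})\le 1$; (ii) expand $\norm{\bvec{U},T}{\uvec{v}_T-\IT\PST\uvec{v}_T}^2$ using \eqref{eq:prod.U}, further bounding the truth-value factors on boundary faces by $1$, so that $h_T^{-2}\norm{\bvec{U},T}{\uvec{v}_T-\IT\PST\uvec{v}_T}^2 \lesssim h_T^{-2}\regT\norm{\bvec{L}^2(T)}{\bvec{v}_T - \vlproj{k}{T}\PST\uvec{v}_T}^2 + h_T^{-1}\sum_{F\in\FT}\norm{\bvec{L}^2(F)}{\bvec{v}_F - \vlproj{k}{F}\PST\uvec{v}_T}^2$; (iii) apply the standard HHO consistency/boundedness estimates for $\PST$ (which is the $\GRAD$-based analogue of the classical potential reconstruction) together with $1\lesssim\regT\lesssim 1$ to absorb these into the right-hand side of \eqref{eq:seminorm.1.T:upper.bound}. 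Concretely, $\vlproj{k}{T}\PST\uvec{v}_T - \bvec{v}_T$ and $\vlproj{k}{F}\PST\uvec{v}_T - \bvec{v}_F$ are controlled, via the definition of $\PST$ and approximation properties, by exactly the two quantities appearing on the right of \eqref{eq:seminorm.1.T:upper.bound}; this is where I would cite the relevant HHO stability lemma (e.g.\ \cite[Lemma~2.18]{Di-Pietro.Droniou:20}) adapted to the present stabilisation.

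For the lower bound \eqref{eq:seminorm.1.T:lower.bound}, the extra hypothesis $\Cf{T}<1$ is what makes it work: it forces $\min(1,\Cf{T}^{-1})=1$ and, crucially, it removes the special treatment of boundary faces in \eqref{eq:prod.U}, so that $(\cdot,\cdot)_{\bvec{U},T}$ becomes the \emph{full} $L^2$-like product over $T$ and all of $\FT$. Then $h_T^{-2}\norm{\bvec{U},T}{\uvec{v}_T-\IT\PST\uvec{v}_T}^2 \gtrsim h_T^{-1}\sum_{F\in\FT}\norm{\bvec{L}^2(F)}{(\bvec{v}_F-\bvec{v}_T)-(\vlproj{k}{F}\PST\uvec{v}_T-\vlproj{k}{T}\PST\uvec{v}_T)}^2$, and combining this with $\norm{\btens{L}^2(T)}{\GT\uvec{v}_T}^2 \gtrsim \norm{\btens{L}^2(T)}{\GRAD\bvec{v}_T}^2$ and a triangle inequality, plus the fact that $\GRAD\PST\uvec{v}_T = \Gproj{k}{T}\GT\uvec{v}_T$ controls $\vlproj{k}{F}\PST\uvec{v}_T-\vlproj{k}{T}\PST\uvec{v}_T$ in terms of $\GT\uvec{v}_T$, recovers the two terms on the left of \eqref{eq:seminorm.1.T:lower.bound}. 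This is again a minor variant of the standard HHO argument. The main obstacle, and the only genuinely non-routine point, is the bookkeeping in step (ii)–(iii) above: one must verify that the \emph{element} penalisation term (absent from the original face-only HHO stabilisation) is still controlled by $\GRAD\bvec{v}_T$ and the face jumps, which follows because $\vlproj{k}{T}\PST\uvec{v}_T-\bvec{v}_T$ is itself bounded by those quantities — essentially a local Poincaré-type inequality for the reconstruction — but this requires being careful that no hidden dependence on $\Cf{T}$ or $h_T$ creeps in.
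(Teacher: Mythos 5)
Your proposal is correct and follows essentially the same route as the paper: the classical two-sided relation between $\GT\uvec{v}_T$, $\GRAD\bvec{v}_T$ and the face jumps, a triangle-inequality decomposition of $\bvec{v}_T-\bvec{v}_F$ through $\vlproj{k}{T}\PST\uvec{v}_T$ and $\vlproj{k}{F}\PST\uvec{v}_T$, control of $\GRAD\PST\uvec{v}_T$ by $\GT\uvec{v}_T$ via the $L^2$-orthogonal projection property in \eqref{eq:PST}, and the observation that $\Cf{T}<1$ both forces $\min(1,\Cf{T}^{-1})=1$ and reinstates the boundary faces in $(\cdot,\cdot)_{\bvec{U},T}$ (the paper, like you, only details the lower bound and leaves the upper bound to the reader). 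One slip to fix: the isolated claim $\norm{\btens{L}^2(T;\Real^{d\times d})}{\GT\uvec{v}_T}^2\gtrsim\norm{\btens{L}^2(T;\Real^{d\times d})}{\GRAD\bvec{v}_T}^2$ in your lower-bound paragraph is false as written --- the face-jump term must be kept on the right-hand side, as in the ``equality-up-to-constants'' you correctly state earlier, and is then absorbed using the bound on the jumps that you derive from the stabilisation term.
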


\begin{proof}
  For the sake of brevity, we only prove \eqref{eq:seminorm.1.T:lower.bound}.
  The proof of \eqref{eq:seminorm.1.T:upper.bound} hinges on similar arguments, together with the fact that $\min(1,\Cf{T}^{-1})\le 1$, and is left to the reader.
  Taking $\btens{\tau} = \GRAD\bvec{v}_T$ in the definition \eqref{eq:GT} of $\GT$, integrating by parts the first term in the right-hand side, and using Cauchy--Schwarz and discrete trace inequalities (see \cite[Lemma~1.32]{Di-Pietro.Droniou:20}) as in the proof of \cite[Eq.~(2.25)]{Di-Pietro.Droniou:20}, we get, after simplifying and raising to the square,
  \begin{equation}\label{eq:est.norm.grad.vT}
    \norm{\btens{L}^2(T;\Real^{d\times d})}{\GRAD\bvec{v}_T}^2
    \lesssim\norm{\btens{L}^2(T;\Real^{d\times d})}{\GT\uvec{v}_T}^2
    + h_T^{-1}\sum_{F\in\FT}\norm{\bvec{L}^2(F;\Real^d)}{\bvec{v}_T - \bvec{v}_F}^2.
  \end{equation}
  To estimate the second term, for any $F\in\FT$, we insert $\pm(\vlproj{k}{T}\PST\uvec{v}_T - \vlproj{k}{F}\PST\uvec{v}_T)$ and use triangle inequalities to get
  \begin{equation}\label{eq:est.vT-vF}
    \begin{aligned}
      &h_T^{-1}\norm{\bvec{L}^2(F;\Real^d)}{\bvec{v}_T - \bvec{v}_F}^2
      \\
      &\quad\lesssim
      h_T^{-1}\norm{\bvec{L}^2(F;\Real^d)}{\bvec{v}_T - \vlproj{k}{T}\PST\uvec{v}_T}^2
      + h_T^{-1}\norm{\bvec{L}^2(F;\Real^d)}{\bvec{v}_F - \vlproj{k}{F}\PST\uvec{v}_T}^2
      \\
      &\qquad
      + h_T^{-1}\norm{\bvec{L}^2(F;\Real^d)}{\vlproj{k}{F}(\PST\uvec{v}_T - \vlproj{k}{T}\PST\uvec{v}_T)}^2
      \\
      &\quad\lesssim
      h_T^{-2}\norm{\bvec{L}^2(T;\Real^d)}{\bvec{v}_T - \vlproj{k}{T}\PST\uvec{v}_T}^2
      + h_T^{-1}\norm{\bvec{L}^2(F;\Real^d)}{\bvec{v}_F - \vlproj{k}{F}\PST\uvec{v}_T}^2
      \\
      &\qquad
      + h_T^{-2}\norm{\bvec{L}^2(T;\Real^d)}{\PST\uvec{v}_T - \vlproj{k}{T}\PST\uvec{v}_T}^2
      \\
      &\quad\lesssim
      h_T^{-2}\norm{\bvec{U},T}{\uvec{v}_T - \IT\PST\uvec{v}_T}^2
      + \norm{\btens{L}^2(T;\Real^{d\times d})}{\GRAD\PST\uvec{v}_T}^2
      \\
      &\quad\lesssim
      \frac{\min(1,\Cf{T}^{-1})}{h_T^2}\norm{\bvec{U},T}{\uvec{v}_T - \IT\PST\uvec{v}_T}^2
      + \norm{\btens{L}^2(T;\Real^{d\times d})}{\GT\uvec{v}_T}^2
      = \norm{\stokes,T}{\uvec{v}_T}^2,
    \end{aligned}
  \end{equation}
  where we have used the $L^2$-boundedness of $\vlproj{k}{F}$ along with discrete trace inequalities in the second passage,
  the definition \eqref{eq:norm.U} of $\norm{\bvec{U},T}{{\cdot}}$ along with $\Cf{T}<1$ for the first two terms and the approximation properties of $\vlproj{k}{T}$ for the last term in the third passage,
  and concluded noticing that $1 = \min(1,\Cf{T}^{-1})$ and that $\GRAD\PST\uvec{v}_T$ is by definition the $L^2$-orthogonal projection of $\GT\uvec{v}_T$ on $\Goly{k}(T)^d$ (see \eqref{eq:PST}), so that $\norm{\btens{L}^2(T;\Real^{d\times d})}{\GRAD\PST\uvec{v}_T}\le\norm{\btens{L}^2(T;\Real^{d\times d})}{\GT\uvec{v}_T}$.
  Plugging \eqref{eq:est.vT-vF} into \eqref{eq:est.norm.grad.vT} and using the fact that $\card(\FT)\lesssim 1$ by mesh regularity, we get
  $\norm{\btens{L}^2(T;\Real^{d\times d})}{\GRAD\bvec{v}_T}^2\lesssim\norm{\stokes,T}{\uvec{v}_T}^2$,
  which is the sought estimate for the first term in the left-hand side of \eqref{eq:seminorm.1.T:lower.bound}.
  The fact second term is $\lesssim\norm{\stokes,T}{\uvec{v}_T}^2$ is an immediate consequence of \eqref{eq:est.vT-vF} along with $\card(\FT)\lesssim 1$.
\end{proof}

\begin{remark}[HHO stabilisation]
  It is not difficult to check that the bilinear form $\UT\times\UT\ni(\uvec{w}_T,\uvec{v}_T)\mapsto(\uvec{w}_T - \IT\PST\uvec{w}_T, \uvec{v}_T - \IT\PST\uvec{v}_T)_{\bvec{U},T}$ matches \cite[Assumption~8.10]{Di-Pietro.Droniou:20} if $\Cf{T}<1$.
  As a matter of fact, this bilinear form is clearly positive-semidefinite, it satisfies the requested seminorm equivalence by \eqref{eq:seminorm.1.T:upper.bound} and \eqref{eq:seminorm.1.T:lower.bound}, and is polynomially consistent since it only depends on its arguments through the difference operators defined by \cite[Eq.~(8.30)]{Di-Pietro.Droniou:20}.
\end{remark}

\subsection{Darcy term}

Let again $T\in\Th$.
The discretisation of the Darcy and coupling terms hinges on the \emph{discrete divergence} $\DT:\UT\to\Poly{k}(T)$ such that
\begin{equation}\label{eq:DT}
  \DT\uvec{v}_T \coloneq \tr(\GT\uvec{v}_T)\qquad\forall\uvec{v}_T\in\UT.
\end{equation}
Based on this operator, we define the \emph{Darcy potential} $\PDT:\UT\to\vPoly{k}(T;\Real^d)$ such that, for all $\uvec{v}_T\in\UT$ and all $(q,\bvec{w})\in\Poly{k+1}(T)\times\cGoly{k}(T)$,
\begin{equation}\label{eq:PDT}
  \int_T\PDT\uvec{v}_T\cdot(\GRAD q + \bvec{w})
  = -\int_T\DT\uvec{v}_T~q
  + \sum_{F\in\FT}\omega_{TF}\int_F(\bvec{v}_F\cdot\normal_F)~q
  + \int_T\bvec{v}_T\cdot\bvec{w}.
\end{equation}
This Darcy potential will play a key role in the discretisation of the source term, to ensure that the scheme is fully robust in the whole range of friction coefficients; see Remark \ref{rem:disc.source}.

\begin{remark}[Link with DDR]\label{rem:link.Xdivh}
  Recall the following Discrete De Rham $\Hdiv{\Omega}$-like space (in dimension $d=2$, this space corresponds to the two-dimensional DDR space for the curl rotated by $\nicefrac{\pi}{2}$):
  \begin{multline*}
  \Xdiv{h}\coloneq\Big\{
  \uvec{v}_h = \big(
  (\bvec{v}_T)_{T\in\Th}, (v_F)_{F\in\Fh}
  \big)\st
  \\
  \text{%
    $\bvec{v}_T\in\Goly{k-1}(T)\oplus\cGoly{k}(T)$
    for all $T\in\Th$ and
    $v_F\in\Poly{k}(F)$ for all $F\in\Fh$
  }
  \Big\}.
  \end{multline*}
  Noticing that $\Goly{k-1}(T)\oplus\cGoly{k}(T)\subset \vPoly{k}(T;\Real^d)$ (cf.\ \eqref{eq:Goly.cGoly}), this space naturally injects into $\Uh$ through the mapping $\Xdiv{h}\ni\uvec{v}_h\mapsto\big((\bvec{v}_T)_{T\in\Th},(v_F\normal_F)_{F\in\Fh}\big)\in\Uh$.
  It can be checked that the discrete divergence \eqref{eq:DT} and the Darcy potential \eqref{eq:PDT} only depend on the polynomial components shared by $\UT$ and $\Xdiv{T}$, and that, in the three-dimensional case, they coincide with the corresponding DDR operators respectively defined by \cite[Eqs.~(3.32) and~(4.9)--(4.10)]{Di-Pietro.Droniou:21*1} (in the two-dimensional case and accounting for the above-mentioned rotation of the spaces, $\DT$ corresponds to the two-dimensional DDR face curl and $\PDT$ to the tangential face reconstruction rotated by a right angle, see \cite[Eqs.~(3.19) and~(3.22)--(3.23)]{Di-Pietro.Droniou:21*1}).
\end{remark}

Accounting for the previous remark and recalling \cite[Eq.~(4.12) and (4.13)]{Di-Pietro.Droniou:21*1} for the three-dimensional case (\cite[Eqs.~(3.24) and (3.25)]{Di-Pietro.Droniou:21*1} for the two-dimensional case), it holds
\begin{alignat}{4}\label{eq:lproj.k-1.PDT}
  \vlproj{k-1}{T}\PDT\uvec{v}_T &= \vlproj{k-1}{T}\bvec{v}_T
  &\qquad&\forall\uvec{v}_T\in\UT,
  \\
  \label{eq:polynomial.consistency:PDT}
  \PDT\IT\bvec{v} &= \bvec{v}
  &\qquad&\forall\bvec{v}\in\vPoly{k}(T;\Real^d).
\end{alignat}
The approximation properties of $\PDT$ in the $L^2$-norm have been studied in \cite[Theorem~6]{Di-Pietro.Droniou:21*1}.
The following proposition extends the above results to general Hilbert seminorms.

\begin{proposition}[Approximation properties of the Darcy potential]
  Let an integer $r\in\{0,\ldots,k\}$ be given.
  Then, for all $T\in\Th$, all $\bvec{v}\in\bvec{H}^{r+1}(T;\Real^d)$, and all $m\in\{0,\ldots,r+1\}$,
  \begin{equation}\label{eq:approximation:PDT}
    \seminorm{\bvec{H}^m(T;\Real^d)}{\bvec{v} - \PDT\IT\bvec{v}}
    \lesssim h_T^{r+1-m}\seminorm{\bvec{H}^{r+1}(T;\Real^d)}{\bvec{v}}.
  \end{equation}
\end{proposition}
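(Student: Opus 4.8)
The plan is to prove the estimate by the standard polynomial-consistency-plus-boundedness argument, localised to a single element $T$. First I would fix $\bvec{v} \in \bvec{H}^{r+1}(T;\Real^d)$ and $m \in \{0,\ldots,r+1\}$, and introduce the $L^2$-orthogonal projection $\vlproj{k}{T}\bvec{v} \in \vPoly{k}(T;\Real^d)$ as a pivot. Using the polynomial consistency \eqref{eq:polynomial.consistency:PDT}, which gives $\PDT\IT(\vlproj{k}{T}\bvec{v}) = \vlproj{k}{T}\bvec{v}$, together with the linearity of $\PDT\IT$, I would write
\[
  \bvec{v} - \PDT\IT\bvec{v}
  = (\bvec{v} - \vlproj{k}{T}\bvec{v})
  - \PDT\IT(\bvec{v} - \vlproj{k}{T}\bvec{v}),
\]
so that, by the triangle inequality,
\[
  \seminorm{\bvec{H}^m(T;\Real^d)}{\bvec{v} - \PDT\IT\bvec{v}}
  \le \seminorm{\bvec{H}^m(T;\Real^d)}{\bvec{v} - \vlproj{k}{T}\bvec{v}}
  + \seminorm{\bvec{H}^m(T;\Real^d)}{\PDT\IT(\bvec{v} - \vlproj{k}{T}\bvec{v})}.
\]
The first term is controlled directly by the classical approximation properties of the $L^2$-orthogonal projector on a mesh element (e.g.\ \cite[Theorem~1.45]{Di-Pietro.Droniou:20}), which give $\seminorm{\bvec{H}^m(T;\Real^d)}{\bvec{v} - \vlproj{k}{T}\bvec{v}} \lesssim h_T^{r+1-m}\seminorm{\bvec{H}^{r+1}(T;\Real^d)}{\bvec{v}}$ since $r \le k$.

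For the second term I would proceed in two substeps. The quantity $\PDT\IT(\bvec{v} - \vlproj{k}{T}\bvec{v})$ is a polynomial in $\vPoly{k}(T;\Real^d)$, so by a discrete inverse inequality on $T$ (valid thanks to mesh regularity, cf.\ \cite[Lemma~1.44]{Di-Pietro.Droniou:20}) its $\bvec{H}^m$-seminorm is bounded by $h_T^{-m}$ times its $\bvec{L}^2$-norm. It therefore suffices to prove the $L^2$-stability bound
\[
  \norm{\bvec{L}^2(T;\Real^d)}{\PDT\IT\bvec{w}}
  \lesssim \norm{\bvec{L}^2(T;\Real^d)}{\bvec{w}}
  + h_T\seminorm{\bvec{H}^1(T;\Real^d)}{\bvec{w}}
\]
for all $\bvec{w} \in \bvec{H}^1(T;\Real^d)$; applying this with $\bvec{w} = \bvec{v} - \vlproj{k}{T}\bvec{v}$ and invoking the approximation properties of $\vlproj{k}{T}$ once more (this time in $\bvec{L}^2$ and in $\bvec{H}^1$) closes the estimate with the correct power $h_T^{r+1-m}$.

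To establish this $L^2$-stability of $\PDT\IT$, I would test the defining relation \eqref{eq:PDT} (rewritten with $\bvec{v}_T = \vlproj{k}{T}\bvec{w}$, $\bvec{v}_F = \vlproj{k}{F}\bvec{w}$) against $\GRAD q + \bvec{w}'$ running over the full decomposition $\vPoly{k}(T;\Real^d) = \Goly{k}(T) \oplus \cGoly{k}(T)$, choosing $\GRAD q + \bvec{w}'$ to equal $\PDT\IT\bvec{w}$ itself (its gradient part lives in $\Goly{k}(T) = \GRAD\Poly{k+1}(T)$ up to the choice of $q$, its complementary part in $\cGoly{k}(T)$). Each of the three right-hand-side terms in \eqref{eq:PDT} must then be bounded: the element terms $\int_T \DT(\cdot)\,q$ and $\int_T \bvec{v}_T \cdot \bvec{w}'$ are handled by Cauchy–Schwarz together with the $\bvec{L}^2$-stability of $\vlproj{k}{T}$ and of $\DT\IT$ (which follows from $\DT = \tr\GT$ and the $\bvec{L}^2$-boundedness of $\GT\IT$, itself a consequence of \eqref{eq:GT}, integration by parts and a continuous trace inequality as in \eqref{eq:boundedness:IT:norm.U}); the face term $\sum_{F}\omega_{TF}\int_F (\bvec{v}_F \cdot \normal_F)\,q$ is controlled by Cauchy–Schwarz, a continuous trace inequality for $\bvec{w}$ on the faces, the $\bvec{L}^2(F)$-stability of $\vlproj{k}{F}$, a discrete trace inequality for the polynomial $q$, and $\card(\FT)\lesssim 1$. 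Balancing the $h_T$-powers (one face integral contributing $h_T^{1/2}\cdot h_T^{-1/2}$ factors) yields exactly the $\norm{\bvec{L}^2(T;\Real^d)}{\bvec{w}} + h_T\seminorm{\bvec{H}^1(T;\Real^d)}{\bvec{w}}$ right-hand side, after dividing through by $\norm{\bvec{L}^2(T;\Real^d)}{\PDT\IT\bvec{w}}$ and using that $\|\GRAD q\|$ and $\|\bvec{w}'\|$ are each $\le \|\PDT\IT\bvec{w}\|$ by orthogonality of the decomposition. The main obstacle I anticipate is the bookkeeping in this last step: one must relate $q$ (defined only up to a constant, and only its gradient being fixed) back to $\|\GRAD q\|_{\bvec{L}^2(T)}$, control $\|q\|_{L^2(T)}$ appearing in the face term via a Poincaré–Wirtinger inequality on $T$ with an $h_T$-explicit constant, and keep every trace/inverse constant uniform in $h_T$ — all of which rely on mesh regularity but require care to assemble into precisely the claimed scaling. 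Alternatively, if one prefers to avoid re-deriving the $L^2$ case, steps two and three can be shortcut by quoting \cite[Theorem~6]{Di-Pietro.Droniou:21*1} for the $m=0$ bound and then only running the inverse-inequality argument of step two, at the cost of a slightly less self-contained proof.
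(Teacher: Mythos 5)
Your argument is correct and is essentially the paper's own proof unfolded: both rest on the polynomial consistency \eqref{eq:polynomial.consistency:PDT} plus the $L^2$-boundedness $\norm{\bvec{L}^2(T;\Real^d)}{\PDT\IT\bvec{w}}\lesssim\norm{\bvec{L}^2(T;\Real^d)}{\bvec{w}}+h_T\seminorm{\bvec{H}^1(T;\Real^d)}{\bvec{w}}$, which the paper packages through the abstract projector lemma \cite[Lemma~1.43]{Di-Pietro.Droniou:20} and obtains by citing \cite{Di-Pietro.Droniou:21*1} rather than re-deriving it, exactly as in your fallback option. The only caveat concerns your optional direct derivation of that $L^2$ bound: the decomposition $\Goly{k}(T)\oplus\cGoly{k}(T)$ is direct but not $L^2$-orthogonal, so the claim that $\norm{\bvec{L}^2(T;\Real^d)}{\GRAD q}$ and $\norm{\bvec{L}^2(T;\Real^d)}{\bvec{w}'}$ are each bounded by $\norm{\bvec{L}^2(T;\Real^d)}{\PDT\IT\bvec{w}}$ ``by orthogonality'' must instead be justified by the uniform (mesh-regularity-dependent) norm equivalence for this splitting --- a known fact, and one the citation route sidesteps entirely.
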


\begin{proof}  
  By \cite[Proposition~1.35]{Di-Pietro.Droniou:20}, $\PDT\IT:\bvec{H}^1(T;\Real^d)\to\vPoly{k}(T;\Real^d)$ is a projector owing to \eqref{eq:polynomial.consistency:PDT}.
  By \cite[Lemma~1.43]{Di-Pietro.Droniou:20}, it then suffices to prove that, for all $\bvec{v}\in\bvec{H}^1(T;\Real^d)$,
  \begin{alignat}{4}\label{eq:approximation:PDT:condition.m=0}
    \norm{\bvec{L}^2(T;\Real^d)}{\PDT\IT\bvec{v}}
    &\lesssim
    \norm{\bvec{L}^2(T;\Real^d)}{\bvec{v}}
    + h_T\seminorm{\bvec{H}^1(T;\Real^d)}{\bvec{v}}
    &\qquad&\text{if $m=0$},
    \\\label{eq:approximation:PDT:condition.m>=1}
    \seminorm{\bvec{H}^1(T;\Real^d)}{\PDT\IT\bvec{v}}
    &\lesssim\seminorm{\bvec{H}^1(T;\Real^d)}{\bvec{v}}
    &\qquad&\text{if $m\ge 1$}.
  \end{alignat}
  To prove \eqref{eq:approximation:PDT:condition.m=0}, it suffices to recall Remark~\ref{rem:link.Xdivh} and use \cite[Eqs.~(4.24) and~(4.28)]{Di-Pietro.Droniou:21*1} for the three-dimensional case (or \cite[Eqs.~(4.23)]{Di-Pietro.Droniou:21*1} for the two-dimensional case).
  To prove \eqref{eq:approximation:PDT:condition.m>=1}, we write
  \[
  \begin{alignedat}{4}
    \seminorm{\bvec{H}^1(T;\Real^d)}{\PDT\IT\bvec{v}}
    &= \seminorm{\bvec{H}^1(T;\Real^d)}{\PDT\IT(\bvec{v} - \vlproj{0}{T}\bvec{v})}
    \\
    &\lesssim
    h_T^{-1}\norm{\bvec{L}^2(T;\Real^d)}{\PDT\IT(\bvec{v} - \vlproj{0}{T}\bvec{v})}
    &\qquad&
    \text{\cite[Eq.~(1.46)]{Di-Pietro.Droniou:20}}
    \\
    &\lesssim
    h_T^{-1}\norm{\bvec{L}^2(T;\Real^d)}{\bvec{v} - \vlproj{0}{T}\bvec{v}}
    + \seminorm{\bvec{H}^1(T;\Real^d)}{\bvec{v} - \vlproj{0}{T}\bvec{v}}
    &\qquad&
    \text{Eq.~\eqref{eq:approximation:PDT:condition.m=0}}
    \\
    &\lesssim\seminorm{\bvec{H}^1(T;\Real^d)}{\bvec{v}},
  \end{alignedat}
  \]
  where the first line follows using the polynomial consistency \eqref{eq:polynomial.consistency:PDT} of $\PDT$ to write $0 = \seminorm{\bvec{H}^1(T;\Real^d)}{\vlproj{0}{T}\bvec{v}} = \seminorm{\bvec{H}^1(T;\Real^d)}{\PDT\IT\vlproj{0}{T}\bvec{v}}$,
  while the conclusion follows from a Poincar\'e--Wirtinger inequality on the zero-average function $\bvec{v} - \vlproj{0}{T}\bvec{v}$.
\end{proof}

Let $\tPDT:\UT\to\vPoly{k}(T;\Real^d)$ be such that
\begin{equation}\label{eq:tPDT}
  \tPDT\uvec{v}_T
  \coloneq\tv{\Cf{T}<1}\bvec{v}_T + \tv{\Cf{T}\ge 1}\PDT\uvec{v}_T
  \qquad\forall\uvec{v}_T\in\UT.
\end{equation}
The Darcy term in \eqref{eq:strong:momentum} is discretised by means of the bilinear form $a_{\nu,h}:\Uh\times\Uh\to\Real$ such that, for all $(\uvec{w}_h,\uvec{v}_h)\in\Uh\times\Uh$,
\begin{equation}\label{eq:a.nu}
  a_{\nu,h}(\uvec{w}_h,\uvec{v}_h)
  \coloneq\sum_{T\in\Th}\nu_Ta_{\darcy,T}(\uvec{w}_T,\uvec{v}_T)
\end{equation}
with, for all $T\in\Th$,
\begin{equation}\label{eq:a.darcy.T}
  a_{\darcy,T}(\uvec{w}_T,\uvec{v}_T)\coloneq
  \int_T\tPDT\uvec{w}_T\cdot\tPDT\uvec{v}_T + \min(1,\Cf{T})(
  \uvec{w}_T - \IT\PDT\uvec{w}_T,
  \uvec{v}_T - \IT\PDT\uvec{v}_T
  )_{\bvec{U},T}.
\end{equation}

We define the following induced norms: For all $\uvec{v}_h\in\Uh$,
\begin{equation}\label{eq:norm.nu.darcy}
  \norm{\nu,h}{\uvec{v}_h}\coloneq a_{\nu,h}(\uvec{v}_h,\uvec{v}_h)^{\nicefrac12}
  \quad\text{and\quad $\norm{\darcy,T}{\uvec{v}_T}\coloneq a_{\darcy,T}(\uvec{v}_T,\uvec{v}_T)^{\nicefrac12}$ for all $T\in\Th$.
  }
\end{equation}

\subsection{Coupling}

The coupling terms in \eqref{eq:strong:momentum} and \eqref{eq:strong:mass} are discretised by the bilinear form $b_h:\Uh\times\Poly{k}(\Th)\to\Real$ such that, for all $(\uvec{v}_h,q_h)\in\Uh\times\Poly{k}(\Th)$,
\begin{equation}\label{eq:bh}
  b_h(\uvec{v}_h,q_h)
  \coloneq-\sum_{T\in\Th}\int_T\DT\uvec{v}_T~q_T,
\end{equation}
where $q_T$ denotes the restriction of $q_h$ to $T$.
Recalling \cite[Eq.~(8.36)]{Di-Pietro.Droniou:20}, it holds:
For all $\bvec{v}\in\bvec{H}^1(\Omega;\Real^d)$,
\begin{equation}\label{eq:consistency:bh}
  b_h(\Ih\bvec{v},q_h) = -\int_\Omega\DIV\bvec{v}~q_h
  \qquad\forall q_h\in\Poly{k}(\Th).
\end{equation}

\subsection{Discrete problem and main results}

The discrete problem reads:
Find $(\uvec{u}_h,p_h)\in\UhD\times\Ph$ such that
\begin{equation}\label{eq:discrete}
  \begin{alignedat}{4}
    a_{\mu,h}(\uvec{u}_h,\uvec{v}_h)
    + a_{\nu,h}(\uvec{u}_h,\uvec{v}_h)
    + b_h(\uvec{v}_h,p_h)
    &= \sum_{T\in\Th}\int_T\bvec{f}\cdot\tPDT\uvec{v}_T
    &\qquad&\forall\uvec{v}_h\in\UhD,
    \\
    -b_h(\uvec{u}_h,q_h)
    &= \int_\Omega gq_h
    &\qquad&\forall q_h\in\Ph.
  \end{alignedat}
\end{equation}
The equivalent variational formulation is:
Find $(\uvec{u}_h,p_h)\in\UhD\times\Ph$ such that
\begin{equation}\label{eq:discrete:variational}
  \mathcal{A}_h((\uvec{u}_h,p_h),(\uvec{v}_h,q_h))
  = \sum_{T\in\Th}\int_T\bvec{f}\cdot\tPDT\uvec{v}_T
  + \int_\Omega gq_h,
\end{equation}
with $\mathcal{A}_h:\big(\Uh\times\Ph\big)^2\to\Real$ such that, for all $(\uvec{w}_h,r_h)$ and all $(\uvec{v}_h,q_h)$ in $\Uh\times\Ph$,
\begin{equation}\label{eq:Ah}
  \mathcal{A}_h((\uvec{w}_h,r_h),(\uvec{v}_h,q_h))
  \coloneq
  a_{\mu,h}(\uvec{w}_h,\uvec{v}_h)
  + a_{\nu,h}(\uvec{w}_h,\uvec{v}_h)
  + b_h(\uvec{v}_h,r_h)
  -b_h(\uvec{w}_h,q_h).
\end{equation}

Recalling \eqref{eq:norm.mu.stokes} and \eqref{eq:norm.nu.darcy}, we equip the space $\UhD$ with the following natural energy norm: For all $\uvec{v}_h\in\UhD$,
\begin{equation}\label{eq:norm.mu.nu.h}
  \norm{\mu,\nu,h}{\uvec{v}_h}\coloneq\left(
  \norm{\mu,h}{\uvec{v}_h}^2 + \norm{\nu,h}{\uvec{v}_h}^2
  \right)^{\nicefrac12}
\end{equation}
and, given a linear form $\ell_h:\UhD\to\Real$, we denote its dual norm by
\[
\norm{\mu,\nu,h,*}{\ell_h}\coloneq\sup_{\uvec{v}_h\in\UhD\setminus\{\uvec{0}\}}\frac{\ell_h(\uvec{v}_h)}{\norm{\mu,\nu,h}{\uvec{v}_h}}.
\]
The bilinear form $a_{\mu,h} + a_{\nu,h}$ is $\norm{\mu,\nu,h}{{\cdot}}$-coercive with unit coercivity constant.
The well-posedness of \eqref{eq:discrete} then classically follows from the theory of mixed methods (see, e.g., \cite[Lemma~A.11]{Di-Pietro.Droniou:20}) thanks to the inf-sup condition on $b_h$ stated in the following lemma.
\begin{lemma}[Inf-sup condition on $b_h$]\label{lem:inf-sup}
  Letting $\beta\coloneq\left(\overline{\mu} + \overline{\nu}\right)^{-\nicefrac12}$, it holds, for all $q_h\in\Ph$,
  \[
  \beta\norm{L^2(\Omega)}{q_h}
  \lesssim
  \sup_{\uvec{v}_h\in\UhD\setminus\{\uvec{0}\}}\frac{b_h(\uvec{v}_h,q_h)}{\norm{\mu,\nu,h}{\uvec{v}_h}}.
  \]
\end{lemma}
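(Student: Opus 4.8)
The plan is to build, for a fixed $q_h\in\Ph$, an explicit Fortin-type test function $\uvec{v}_h\in\UhD$ realising the inf-sup bound. First I would invoke the continuous surjectivity of the divergence: since $q_h\in L^2_0(\Omega)$, there exists $\bvec{v}\in\bvec{H}^1_0(\Omega;\Real^d)$ with $\DIV\bvec{v}=q_h$ and $\seminorm{\bvec{H}^1(\Omega;\Real^d)}{\bvec{v}}\lesssim\norm{L^2(\Omega)}{q_h}$ (the constant depending only on $\Omega$, hence absorbed into $\lesssim$). Set $\uvec{v}_h\coloneq\Ih\bvec{v}$, which lies in $\UhD$ because $\bvec{v}$ has zero trace and the face interpolates are $L^2$-projections of that trace. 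By the commutation/consistency property \eqref{eq:consistency:bh}, $b_h(\uvec{v}_h,q_h)=-\int_\Omega\DIV\bvec{v}\,q_h=-\int_\Omega q_h^2=-\norm{L^2(\Omega)}{q_h}^2$; replacing $\bvec{v}$ by $-\bvec{v}$ if needed, this gives a test function with $b_h(\uvec{v}_h,q_h)=\norm{L^2(\Omega)}{q_h}^2$.

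The remaining and main task is to bound $\norm{\mu,\nu,h}{\uvec{v}_h}$ from above by $\beta^{-1}\norm{L^2(\Omega)}{q_h}$, up to the hidden constant. I would estimate the two pieces of \eqref{eq:norm.mu.nu.h} separately, element by element. For the Stokes contribution $\norm{\mu,h}{\Ih\bvec{v}}^2=\sum_{T}\mu_T\norm{\stokes,T}{\IT\bvec{v}}^2$, I apply the upper bound \eqref{eq:seminorm.1.T:upper.bound}: the full-gradient term gives $\norm{\btens{L}^2(T;\Real^{d\times d})}{\GRAD(\vlproj{k}{T}\bvec{v})}\lesssim\seminorm{\bvec{H}^1(T;\Real^d)}{\bvec{v}}$ by $L^2$-stability of $\vlproj{k}{T}$ in the $H^1$-seminorm (\cite[Eq.~(1.46)]{Di-Pietro.Droniou:20} or the analogous commutation estimate), and the face differences $h_T^{-1/2}\norm{\bvec{L}^2(F;\Real^d)}{\vlproj{k}{T}\bvec{v}-\vlproj{k}{F}\bvec{v}}$ are controlled by $\seminorm{\bvec{H}^1(T;\Real^d)}{\bvec{v}}$ via the usual insertion of $\pm\bvec{v}$, trace inequalities, and approximation properties of the projectors (exactly the computation carried out in \eqref{eq:est.vT-vF}). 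Hence $\norm{\mu,h}{\Ih\bvec{v}}^2\lesssim\sum_T\mu_T\seminorm{\bvec{H}^1(T;\Real^d)}{\bvec{v}}^2\le\overline{\mu}\,\seminorm{\bvec{H}^1(\Omega;\Real^d)}{\bvec{v}}^2\lesssim\overline{\mu}\,\norm{L^2(\Omega)}{q_h}^2$.

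For the Darcy contribution $\norm{\nu,h}{\Ih\bvec{v}}^2=\sum_T\nu_T\norm{\darcy,T}{\IT\bvec{v}}^2$, I treat the two stabilisation regimes of $\tPDT$ (cf.\ \eqref{eq:tPDT}) uniformly. The potential term $\int_T\tPDT\IT\bvec{v}\cdot\tPDT\IT\bvec{v}$ equals $\norm{\bvec{L}^2(T;\Real^d)}{\vlproj{k}{T}\bvec{v}}^2$ when $\Cf{T}<1$ and $\norm{\bvec{L}^2(T;\Real^d)}{\PDT\IT\bvec{v}}^2$ when $\Cf{T}\ge1$; in both cases it is $\lesssim\norm{\bvec{L}^2(T;\Real^d)}{\bvec{v}}^2+h_T^2\seminorm{\bvec{H}^1(T;\Real^d)}{\bvec{v}}^2$ using $L^2$-stability of $\vlproj{k}{T}$ and \eqref{eq:approximation:PDT:condition.m=0}. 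The stabilisation term $\min(1,\Cf{T})\,\norm{\bvec{U},T}{\IT\bvec{v}-\IT\PDT\IT\bvec{v}}^2$ is bounded, using $\min(1,\Cf{T})\le1$ and \eqref{eq:boundedness:IT:norm.U} applied to $\bvec{v}-\PDT\IT\bvec{v}$, by $\lesssim\norm{\bvec{L}^2(T;\Real^d)}{\bvec{v}-\PDT\IT\bvec{v}}^2+h_T^2\seminorm{\bvec{H}^1(T;\Real^d)}{\bvec{v}-\PDT\IT\bvec{v}}^2$, which by the approximation property \eqref{eq:approximation:PDT} (with $r=0$, $m\in\{0,1\}$) is $\lesssim h_T^2\seminorm{\bvec{H}^1(T;\Real^d)}{\bvec{v}}^2\le\norm{\bvec{L}^2(T;\Real^d)}{\bvec{v}}^2+h_T^2\seminorm{\bvec{H}^1(T;\Real^d)}{\bvec{v}}^2$. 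Summing over $T$ and using $h_T\le h\lesssim1$ (unit-diameter $\Omega$) together with a global Poincaré inequality $\norm{\bvec{L}^2(\Omega;\Real^d)}{\bvec{v}}\lesssim\seminorm{\bvec{H}^1(\Omega;\Real^d)}{\bvec{v}}$ on $\bvec{H}^1_0$, I get $\norm{\nu,h}{\Ih\bvec{v}}^2\lesssim\overline{\nu}\,\seminorm{\bvec{H}^1(\Omega;\Real^d)}{\bvec{v}}^2\lesssim\overline{\nu}\,\norm{L^2(\Omega)}{q_h}^2$. Adding the two bounds yields $\norm{\mu,\nu,h}{\uvec{v}_h}\lesssim(\overline{\mu}+\overline{\nu})^{\nicefrac12}\norm{L^2(\Omega)}{q_h}=\beta^{-1}\norm{L^2(\Omega)}{q_h}$, and dividing $b_h(\uvec{v}_h,q_h)=\norm{L^2(\Omega)}{q_h}^2$ by this bound gives the claim.

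The main obstacle is obtaining a bound on $\norm{\mu,\nu,h}{\Ih\bvec{v}}$ that is genuinely uniform in the friction coefficients, i.e.\ tracking that every cutoff factor $\min(1,\Cf{T}^{\pm1})$ is harmless (always $\le1$, so it only helps), that the regime-switch in $\tPDT$ and in the boundary-face treatment of $(\cdot,\cdot)_{\bvec{U},T}$ does not spoil the continuous-interpolation estimates, and that the coefficients $\mu_T,\nu_T$ factor out cleanly into the single constant $\overline{\mu}+\overline{\nu}$ so that the $\beta$ in the statement is exactly $(\overline{\mu}+\overline{\nu})^{-1/2}$; the Stokes/Darcy interpolation estimates themselves are routine consequences of the boundedness and approximation properties already recorded above.
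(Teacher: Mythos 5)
Your proposal is correct and is exactly the argument the paper invokes: the classical Fortin construction based on the continuous surjectivity of $\DIV:\bvec{H}^1_0(\Omega;\Real^d)\to L^2_0(\Omega)$, the commutation property \eqref{eq:consistency:bh}, and the $\norm{\mu,\nu,h}{{\cdot}}$-boundedness of $\Ih$. The only difference is presentational: the paper isolates the bound $\beta\norm{\mu,\nu,h}{\Ih\bvec{v}}\lesssim\norm{\bvec{H}^1(\Omega;\Real^d)}{\bvec{v}}$ as a separate proposition (\eqref{eq:boundedness:IT:norm.mu.nu}) and then cites the standard Fortin lemma, whereas you re-derive that boundedness inline by essentially the same term-by-term estimates.
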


\begin{proof}
  See Section \ref{sec:analysis:stability}.
\end{proof}

Let $H^m(\Th)$ be spanned by square-integrable scalar-valued functions on $\Omega$ whose restriction to every mesh element $T\in\Th$ is in $H^m(T)$, and denote by $\bvec{H}^m(\Th;\Real^d)$ its vector-valued counterpart.
Thanks to the presence of cut-off factors, the following error estimate is robust across the entire range of (local) regimes.

\begin{theorem}[Error estimate]\label{thm:error.estimate}
  Denote by $(\bvec{u},p)\in\bvec{H}^1_0(\Omega;\Real^d)\times L^2_0(\Omega)$ the unique solution to the standard weak formulation of \eqref{eq:strong} and by $(\uvec{u}_h,p_h)\in\UhD\times\Ph$ the unique solution of the numerical scheme \eqref{eq:discrete} (or, equivalently, \eqref{eq:discrete:variational}).
  Then, recalling the notation \eqref{eq:tv} for the truth value of a logical proposition and assuming, for some $r\in\{0,\ldots,k\}$, $\bvec{u}\in\bvec{H}^{r+2}(\Th;\Real^d)$, $p\in H^1(\Omega)$, and, for all $T\in\Th$, $p\in H^{r+1+\tv{\Cf{T}\ge 1}}(T)$, it holds,
  \begin{equation}\label{eq:error.estimate}
    \begin{aligned}
      &
      \norm{\mu,\nu,h}{\uvec{u}_h - \Ih\bvec{u}}^2+\norm{L^2(\Omega)}{p_h - \lproj{k}{h} p}^2
      \\
      &\quad
      \lesssim\frac{1}{\gamma^2}\Bigg[
        \begin{aligned}[t]
          &\sum_{T\in\Th}\mu_T\min(1,\Cf{T}^{-1})h_T^{2(r+1)}\seminorm{\bvec{H}^{r+2}(T;\Real^d)}{\bvec{u}}^2
          + \sum_{T\in\Th}\nu_T\min(1,\Cf{T})h_T^{2(r+1)}\seminorm{\bvec{H}^{r+1}(T;\Real^d)}{\bvec{u}}^2
          \\
          &+ \sum_{T\in\Th}\left(
          \mu_T^{-1}\tv{\Cf{T}<1}h_T^{2(r+1)}\seminorm{H^{r+1}(T)}{p}^2
          + \nu_T^{-1}\tv{\Cf{T}\ge 1}h_T^{2(r+1)}\seminorm{H^{r+2}(T)}{p}^2    
          \right)
          \Bigg],
        \end{aligned}
    \end{aligned}
  \end{equation}
  where $\gamma^{-2}\coloneq 4\beta^{-4} + 8\beta^{-2} + 1$ with $\beta$ as in Lemma~\ref{lem:inf-sup}, while, for all $T\in\Th$, $\nu_T^{-1}\tv{\Cf{T}\ge 1}\coloneq 0$ if $\nu_T = 0$.
\end{theorem}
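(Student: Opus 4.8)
The plan is to follow the classical Strang-type / third-Strang-lemma route for mixed methods, but carefully tracking the regime-dependent cut-off factors at every stage. First I would derive the \emph{error equation}: setting $\uvec{e}_h \coloneq \uvec{u}_h - \Ih\bvec{u}$ and $\epsilon_h \coloneq p_h - \lproj{k}{h}p$, I would plug these into $\mathcal{A}_h$ and use the discrete problem \eqref{eq:discrete:variational} to obtain
\[
  \mathcal{A}_h((\uvec{e}_h,\epsilon_h),(\uvec{v}_h,q_h))
  = \mathcal{E}_h(\uvec{v}_h,q_h)
  \qquad\forall(\uvec{v}_h,q_h)\in\UhD\times\Ph,
\]
where the consistency error $\mathcal{E}_h$ collects the mismatch between the exact momentum/mass balance and the discrete bilinear forms evaluated at $\Ih\bvec{u}$ and $\lproj{k}{h}p$. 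Using the commutation property \eqref{eq:consistency:bh} of $b_h$, the mass-equation part of $\mathcal{E}_h$ vanishes, so $\mathcal{E}_h$ reduces to a momentum residual: the difference between $\sum_T\int_T\bvec{f}\cdot\tPDT\uvec{v}_T$ (with $\bvec{f}$ replaced via \eqref{eq:strong:momentum}) and $a_{\mu,h}(\Ih\bvec{u},\uvec{v}_h)+a_{\nu,h}(\Ih\bvec{u},\uvec{v}_h)+b_h(\uvec{v}_h,\lproj{k}{h}p)$. This naturally splits into a Stokes consistency term $\mathcal{E}_{\stokes,h}$, a Darcy consistency term $\mathcal{E}_{\darcy,h}$, a coupling/pressure consistency term $\mathcal{E}_{\mathrm{c},h}$, and a right-hand-side consistency term $\mathcal{E}_{\mathrm{rhs},h}$ arising because the source is tested against $\tPDT\uvec{v}_T$ rather than against $\bvec{v}_T$ — the latter is precisely why the Darcy potential was introduced (cf.\ Remark~\ref{rem:disc.source}).

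Next I would bound each of these consistency terms in the dual norm $\norm{\mu,\nu,h,*}{{\cdot}}$. For $\mathcal{E}_{\stokes,h}$ I would use the standard HHO consistency estimate for the viscous bilinear form (integration by parts against $\GT$, the definition of $\PST$, and the norm equivalence of Lemma~\ref{lem:hho.stabilisation}), producing a bound in terms of $\mu_T^{1/2}\min(1,\Cf{T}^{-1})^{1/2}h_T^{r+1}\seminorm{\bvec{H}^{r+2}(T;\Real^d)}{\bvec{u}}$; the factor $\min(1,\Cf{T}^{-1})$ comes from the stabilisation scaling in \eqref{eq:a.stokes.T}. For $\mathcal{E}_{\darcy,h}$ I would use the approximation properties \eqref{eq:approximation:PDT} of $\PDT$ together with \eqref{eq:lproj.k-1.PDT}, \eqref{eq:polynomial.consistency:PDT} and \eqref{eq:tPDT}, distinguishing the two regimes: when $\Cf{T}<1$, $\tPDT\uvec{v}_T=\bvec{v}_T$ and the relevant regularity of $\bvec{u}$ is $H^{r+1}$ with weight $\nu_T^{1/2}\min(1,\Cf{T})^{1/2}$; when $\Cf{T}\ge1$, $\tPDT=\PDT$ and one exploits the extra projection consistency. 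The pressure/coupling term $\mathcal{E}_{\mathrm{c},h}$ and the source term $\mathcal{E}_{\mathrm{rhs},h}$ should be grouped: testing $\GRAD p$ against $\tPDT\uvec{v}_T$ and integrating by parts, the volumetric part combines with $b_h(\uvec{v}_h,\lproj{k}{h}p)$ via \eqref{eq:DT}, leaving a boundary-type residual controlled by $\mu_T^{-1/2}\tv{\Cf{T}<1}h_T^{r+1}\seminorm{H^{r+1}(T)}{p}$ in the Stokes regime and $\nu_T^{-1/2}\tv{\Cf{T}\ge1}h_T^{r+1}\seminorm{H^{r+2}(T)}{p}$ in the Darcy regime — the one-order-higher regularity requirement on $p$ in Darcy elements (visible in the hypothesis $p\in H^{r+1+\tv{\Cf{T}\ge1}}(T)$) is exactly what makes this term controllable when only the $\nu_T$-weighted energy norm is available there. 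This is the step I expect to be the main obstacle: pairing the pressure-gradient consistency with the Darcy potential so that the estimate comes out with the correct, regime-appropriate negative power of $\mu_T$ or $\nu_T$, since a naive bound would lose robustness in the pure-Darcy limit where $\mu_T^{-1}$ blows up relative to the energy norm.

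Finally I would convert the dual-norm bound on $\mathcal{E}_h$ into the stated error estimate via the stability of the saddle-point system. Taking $(\uvec{v}_h,q_h)=(\uvec{e}_h,\epsilon_h)$ in the error equation and using that $a_{\mu,h}+a_{\nu,h}$ is $\norm{\mu,\nu,h}{{\cdot}}$-coercive with unit constant yields control of $\norm{\mu,\nu,h}{\uvec{e}_h}$; then the inf-sup condition of Lemma~\ref{lem:inf-sup} applied to $\epsilon_h$, combined with the error equation tested against a suitable $\uvec{v}_h$ realising the supremum and the boundedness of $b_h$, yields control of $\norm{L^2(\Omega)}{\epsilon_h}$ in terms of $\beta^{-1}$ and $\norm{\mu,\nu,h,*}{\mathcal{E}_h}$. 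Combining the two, squaring, and tracking constants gives the prefactor $\gamma^{-2}=4\beta^{-4}+8\beta^{-2}+1$; collecting the per-element consistency bounds under the sum over $T\in\Th$ produces exactly the right-hand side of \eqref{eq:error.estimate}. The convention $\nu_T^{-1}\tv{\Cf{T}\ge1}\coloneq0$ when $\nu_T=0$ is consistent because $\Cf{T}\ge1$ forces $\nu_T>0$, so that term never actually arises with a vanishing denominator.
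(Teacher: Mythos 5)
Your proposal is correct and follows essentially the same route as the paper: a third-Strang-lemma argument combining the $\gamma$-inf-sup stability of $\mathcal{A}_h$ (coercivity of $a_{\mu,h}+a_{\nu,h}$ plus the inf-sup condition on $b_h$) with the same four-way decomposition of the consistency error into Stokes, Darcy, coupling, and right-hand-side contributions, each bounded with the regime-dependent cut-off factors exactly as in the paper's consistency lemmas. The only cosmetic difference is that you spell out the saddle-point stability argument by hand where the paper cites an abstract lemma, and you suggest grouping $\Errc$ and $\Errrhs$ where the paper keeps them separate; neither affects the substance.
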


\begin{proof}
  See Section \ref{sec:analysis:convergence}.
\end{proof}
\begin{remark}[Robustness of the error estimate and application to the Darcy problem]\label{rem:pure.darcy}
  In the spirit of \cite[Remark~13]{Botti.Di-Pietro.ea:18}, the presence of the cutoff factors $\min(1,\Cf{T}^{-1})$, $\min(1,\Cf{T})$, $\mu_T^{-1}\tv{\Cf{T}<1}$, and $\nu_T^{-1}\tv{\Cf{T}\ge 1}$ makes the above estimate robust across the entire range $\Cf{T}\in\lbrack 0,+\infty)$.
  
  The case $\Cf{T} = +\infty$ corresponds to the pure Darcy problem, which is the singular limit obtained assuming $\min_\Omega\nu>0$ and $\Cf{T} = +\infty$ for all $T\in\Th$.
  In this case, a more in-depth discussion is in order.
  Denoting by $\gamma_{\normal}$ the normal trace operator on $\partial\Omega$, the space for the velocity becomes $\HZdiv{\Omega}\coloneq\{\bvec{v}\in\Hdiv{\Omega}\st\text{$\gamma_{\normal}(\bvec{v}) = 0$ on $\partial\Omega$}\}$, and the weak formulation of \eqref{eq:strong} yields the Darcy problem in mixed form.
  The error estimate \eqref{eq:error.estimate} remains valid under the regularity assumption $\bvec{u}\in\bvec{H}^{r+1}(\Th;\Real^d)$, and provided the following conventions are adopted:
  $\mu_T^{-1}\tv{\Cf{T}<1}\coloneq 0$ and, for any $\bvec{v}\in\HZdiv{\Omega}\cap\bvec{H}^1(\Th;\Real^d)$, all the components of the boundary values of $\Ih\bvec{v}$ are forced to zero, i.e., $(\Ih\bvec{v})_F \coloneq \bvec{0}$ for all $F\in\Fhb$.
  Notice that the tangential components of the velocity on boundary faces do not appear in the formulation of the method when $\mu = 0$.
    To check this fact:
  \begin{itemize}
  \item Concerning the Darcy contribution $a_{\darcy,T}$ (cf.\ \eqref{eq:a.darcy.T}), recall Remark~\ref{rem:link.Xdivh} for the consistent term while, for the stabilisation term, notice that, by \eqref{eq:prod.U}, boundary faces are not present in $(\cdot,\cdot)_{\bvec{U},T}$ since $\Cf{T}\ge 1$ for all $T\in\Th$ ;
  \item Concerning the coupling term $b_h$ (cf.\ \eqref{eq:bh}), notice that the following equivalent formulation results applying the definition \eqref{eq:GT} of $\GT$ with $\btens{\tau} = q_T\Id\coloneq (q_h)_{|T}\Id$ for all $T\in\Th$:
    \[
    b_h(\uvec{v}_h,q_h)
    = \sum_{T\in\Th}\left(
      \int_T\bvec{v}_T\cdot\GRAD q_T
      - \sum_{F\in\FT}\omega_{TF}\int_F(\bvec{v}_F\cdot\normal_F)~q_T
      \right),
      \]
      clearly showing that $b_h$ is independent of the tangential component of $\bvec{v}_F$ for all $F\in\Fh$.
  \end{itemize}
  
  The method obtained for the pure Darcy problem has more unknowns than, say, the mixed method of \cite{Di-Pietro.Ern:17} or a similar one that could be obtained starting from the space $\Xdiv{h}$ of \cite{Di-Pietro.Droniou:21*1}.
  In particular, the tangential components of interface unknowns are not present in the consistency term of $a_{\darcy,T}$ (see again Remark~\ref{rem:link.Xdivh}), but are controlled by the stabilisation term.
  Despite this difference in the discrete space for the flux, the estimate for the error on $\bvec{u}$ resulting from \eqref{eq:error.estimate} in the pure Darcy case is analogous to the one given in \cite[Theorem~6]{Di-Pietro.Ern:17} (where the highest regularity case corresponding to $r = k$ is considered).
\end{remark}


\section{Numerical tests}\label{sec:numerical.tests}

  In this section we numerically assess the convergence properties of the scheme \eqref{eq:discrete} for different values of the friction coefficient (including the limit cases) and on both standard and genuinely polyhedral meshes.

The code used for the numerical tests is part of the open source C++ \texttt{HArDCore3D} library; see \url{https://github.com/jdroniou/HArDCore}.
In order to reduce the size of the global linear systems, static condensation was applied the scheme \eqref{eq:discrete} in accordance with the principles outlined in \cite[Appendix B]{Di-Pietro.Droniou:20}; see \cite[Section 6]{Di-Pietro.Ern.ea:16*1} for a discussion specific to the Stokes equations and \cite{Botti.Di-Pietro:22} for a study of the effect of static condensation on $p$-multilevel preconditioners for the Stokes problem.
Specifically, we have chosen to locally eliminate all element degrees of freedom except for the average value of the pressure inside each element. The linear systems were solved using the \texttt{Intel MKL PARDISO} library (see \url{https://software.intel.com/en-us/mkl}). 

The parameter $\regT$ in \eqref{eq:prod.U} was chosen as $\frac{h_T^3}{|T|}\card(\FT)$, to give a larger weight to the element contribution in \eqref{eq:norm.U} when $T$ is elongated or has many faces: this compensates the relatively larger contribution, in these circumstances, of the boundary terms in this local norm. We have also applied scalings to the stabilisation terms (detailed in each section).
Introducing scalings in the stabilisation terms is not strictly necessary to observe the convergence of the scheme at the expected rates, but we noticed that they improve the magnitudes of the relative errors. Understanding the optimal scaling of stabilisations involved in polytopal methods is an ongoing subject of investigation; here, these numbers were found by quick trial and error on inexpensive tests (low degree $k$, coarse meshes), before being used in all the tests below.

\subsection{Convergence in various regimes}\label{sec:conv.various.regimes}

Following \cite{Botti.Di-Pietro.ea:18}, we consider a constant viscosity $\mu$ and inverse permeability $\nu$, and we evaluate the relative velocity--pressure error
\[
E_{\bvec{u},p}=\frac{\left(\norm{\mu,\nu,h}{\uvec{u}_h-\Ih\bvec{u}}^2+\norm{L^2(\Omega)}{p_h-\lproj{k}{h}p}^2\right)^{\nicefrac12}}{\left(\norm{\mu,\nu,h}{\Ih\bvec{u}}^2+\norm{L^2(\Omega)}{\lproj{k}{h}p}^2\right)^{\nicefrac12}}
\]
when the nature of the exact solution $(\bvec{u},p)$ is determined by the global friction coefficient $\Cf{\Omega}=\nicefrac{\nu}{\mu}$, with the convention $\Cf{\Omega}=+\infty$ if $\mu=0$. Specifically, we consider the domain $\Omega=(0,1)^3$ and, setting $\chi_\stokes(\Cf{\Omega}) \coloneq \exp(-\Cf{\Omega})$, the pressure and velocity are chosen as
\begin{align}
p(x,y,z)={}&\sin(2\pi x)\sin(2\pi y)\sin(2\pi z)\quad\forall (x,y,z)\in\Omega,\label{eq:trigo.p}\\
\bvec{u}={}&\chi_\stokes(\Cf{\Omega})\bvec{u}_\stokes+(1-\chi_\stokes(\Cf{\Omega}))\bvec{u}_\darcy,\nonumber
\end{align}
where $\bvec{u}_\stokes$ and $\bvec{u}_\darcy$ are the velocities obtained in the Stokes ($\Cf{\Omega}=0$) and Darcy ($\Cf{\Omega}=+\infty$) limits, and are given by
\[
\begin{gathered}
  \bvec{u}_\stokes(x,y,z)
  =\frac12\begin{bmatrix}
  \sin(2\pi x)\cos(2\pi y)\cos(2\pi z)\\
  \cos(2\pi x)\sin(2\pi y)\cos(2\pi z)\\
  -2\cos(2\pi x)\cos(2\pi y)\sin(2\pi z)
  \end{bmatrix}\quad\forall (x,y,z)\in\Omega,\\
  \bvec{u}_\darcy=\begin{cases}
  -\nu^{-1}\GRAD p & \text{if $\nu>0$} ,\\
  \bvec{0} & \text{otherwise}.
  \end{cases}
\end{gathered}
\]
We notice that $\DIV \bvec{u}_\stokes=0$ and that $\nu\bvec{u}_\darcy+\GRAD p=0$; these are expected relations, respectively, for a solution of the incompressible Stokes equation, and for a solution of the Darcy equation in mixed form (when gravity is neglected).
  The meshes used for the test correspond to the families of Voronoi meshes ``Voro-small-0'', of tetrahedral meshes ``Tetgen-Cube-0'', and of random hexahedral meshes ``Random-Hexahedra'' available on the \texttt{HArDCore3D} repository. The stabilisation term in the Stokes contribution \eqref{eq:a.stokes.T} has been scaled by 3, and the stabilisation term in the Darcy contribution \eqref{eq:a.darcy.T} by 0.3.
The errors as functions of $h$ are presented in Figures \ref{fig:conv.voro}, \ref{fig:conv.tets} and \ref{fig:conv.hexa}, showing that the predicted convergence is observed in practice for all the considered mesh families and polynomial degrees, and that both the orders of convergence and the magnitudes of errors are robust in all regimes.
\begin{figure}[h!]\centering
  \ref{br.conv.voro}
  \vspace{0.50cm}\\
  \begin{minipage}{0.45\textwidth}
    \begin{tikzpicture}[scale=0.85]
      \begin{loglogaxis} [legend columns=4, legend to name=br.conv.voro]  
        \logLogSlopeTriangle{0.90}{0.4}{0.1}{1}{black};
        \addplot [mark=star, red] table[x=MeshSize,y=EnergyError] {dat/convergence/Voro-small-0_k0_sv1_sp1/data_rates.dat};
        \addlegendentry{$k=0$;}
        \logLogSlopeTriangle{0.90}{0.4}{0.1}{2}{black};
        \addplot [mark=*, blue] table[x=MeshSize,y=EnergyError] {dat/convergence/Voro-small-0_k1_sv1_sp1/data_rates.dat};
        \addlegendentry{$k=1$;}
        \logLogSlopeTriangle{0.90}{0.4}{0.1}{3}{black};
        \addplot [mark=o, olive] table[x=MeshSize,y=EnergyError] {dat/convergence/Voro-small-0_k2_sv1_sp1/data_rates.dat};
        \addlegendentry{$k=2$}
        \logLogSlopeTriangle{0.90}{0.4}{0.1}{4}{black};
        \addplot [mark=o, black] table[x=MeshSize,y=EnergyError] {dat/convergence/Voro-small-0_k3_sv1_sp1/data_rates.dat};
        \addlegendentry{$k=3$}
      \end{loglogaxis}            
    \end{tikzpicture}
    \subcaption{$\mu=\nu=1$}
  \end{minipage}
  \begin{minipage}{0.45\textwidth}
    \begin{tikzpicture}[scale=0.85]
      \begin{loglogaxis} 
        \logLogSlopeTriangle{0.90}{0.4}{0.1}{1}{black};
        \addplot [mark=star, red] table[x=MeshSize,y=EnergyError] {dat/convergence/Voro-small-0_k0_sv1_sp0/data_rates.dat};
        \logLogSlopeTriangle{0.90}{0.4}{0.1}{2}{black};
        \addplot [mark=*, blue] table[x=MeshSize,y=EnergyError] {dat/convergence/Voro-small-0_k1_sv1_sp0/data_rates.dat};
        \logLogSlopeTriangle{0.90}{0.4}{0.1}{3}{black};
        \addplot [mark=o, olive] table[x=MeshSize,y=EnergyError] {dat/convergence/Voro-small-0_k2_sv1_sp0/data_rates.dat};
        \logLogSlopeTriangle{0.90}{0.4}{0.1}{4}{black};
        \addplot [mark=o, black] table[x=MeshSize,y=EnergyError] {dat/convergence/Voro-small-0_k3_sv1_sp0/data_rates.dat};
      \end{loglogaxis}            
    \end{tikzpicture}
    \subcaption{$\mu=1$, $\nu=0$}
  \end{minipage}
  \\[0.5em]
  \begin{minipage}{0.45\textwidth}
    \begin{tikzpicture}[scale=0.85]
      \begin{loglogaxis} 
        \logLogSlopeTriangle{0.90}{0.4}{0.1}{1}{black};
        \addplot [mark=star, red] table[x=MeshSize,y=EnergyError] {dat/convergence/Voro-small-0_k0_sv0_sp1/data_rates.dat};
        \logLogSlopeTriangle{0.90}{0.4}{0.1}{2}{black};
        \addplot [mark=*, blue] table[x=MeshSize,y=EnergyError] {dat/convergence/Voro-small-0_k1_sv0_sp1/data_rates.dat};
        \logLogSlopeTriangle{0.90}{0.4}{0.1}{3}{black};
        \addplot [mark=o, olive] table[x=MeshSize,y=EnergyError] {dat/convergence/Voro-small-0_k2_sv0_sp1/data_rates.dat};
        \logLogSlopeTriangle{0.90}{0.4}{0.1}{4}{black};
        \addplot [mark=o, black] table[x=MeshSize,y=EnergyError] {dat/convergence/Voro-small-0_k3_sv0_sp1/data_rates.dat};
      \end{loglogaxis}            
    \end{tikzpicture}
    \subcaption{$\mu=0$, $\nu=1$}
  \end{minipage}
  \caption{Tests of Section \ref{sec:conv.various.regimes}, Voronoi meshes: errors $E_{u,p}$ with respect to the mesh size $h$}
  \label{fig:conv.voro}
\end{figure}
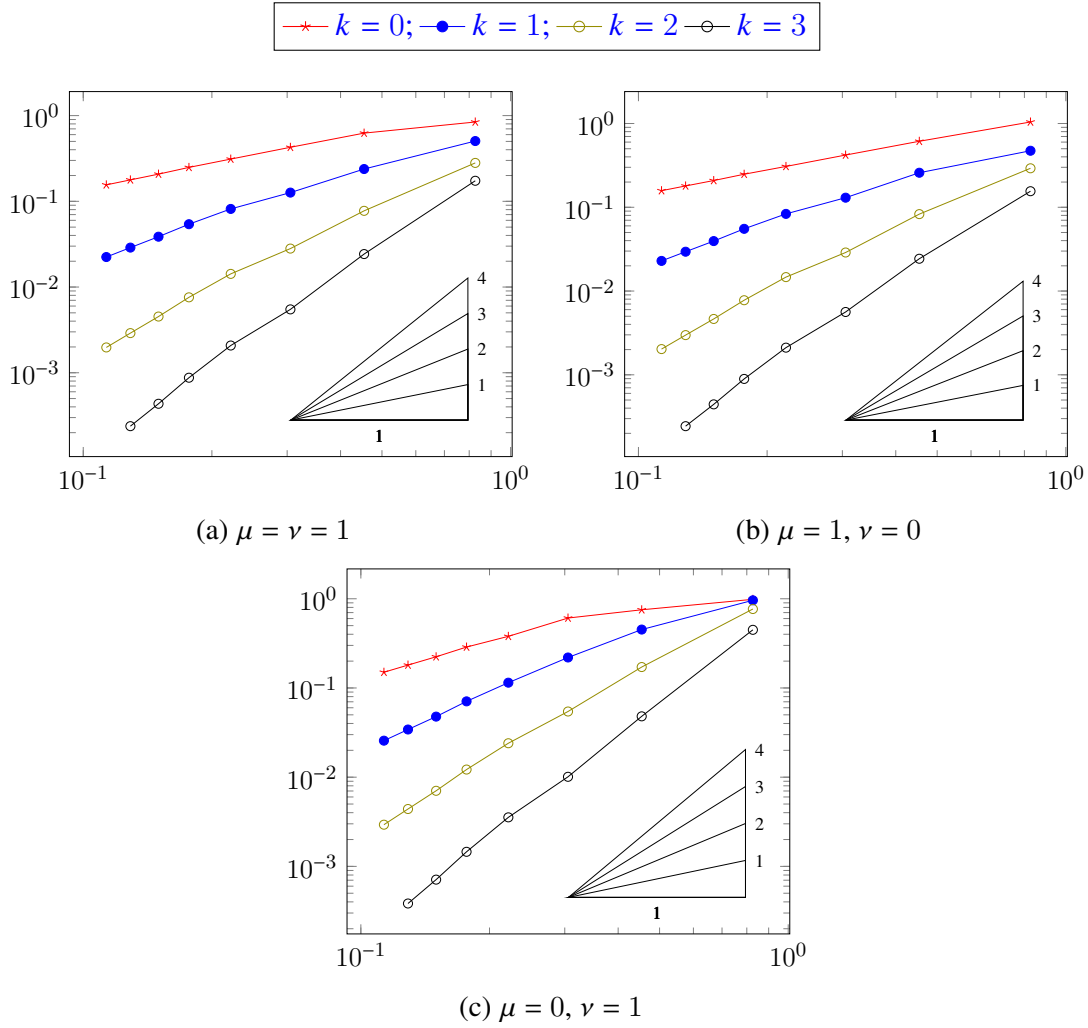

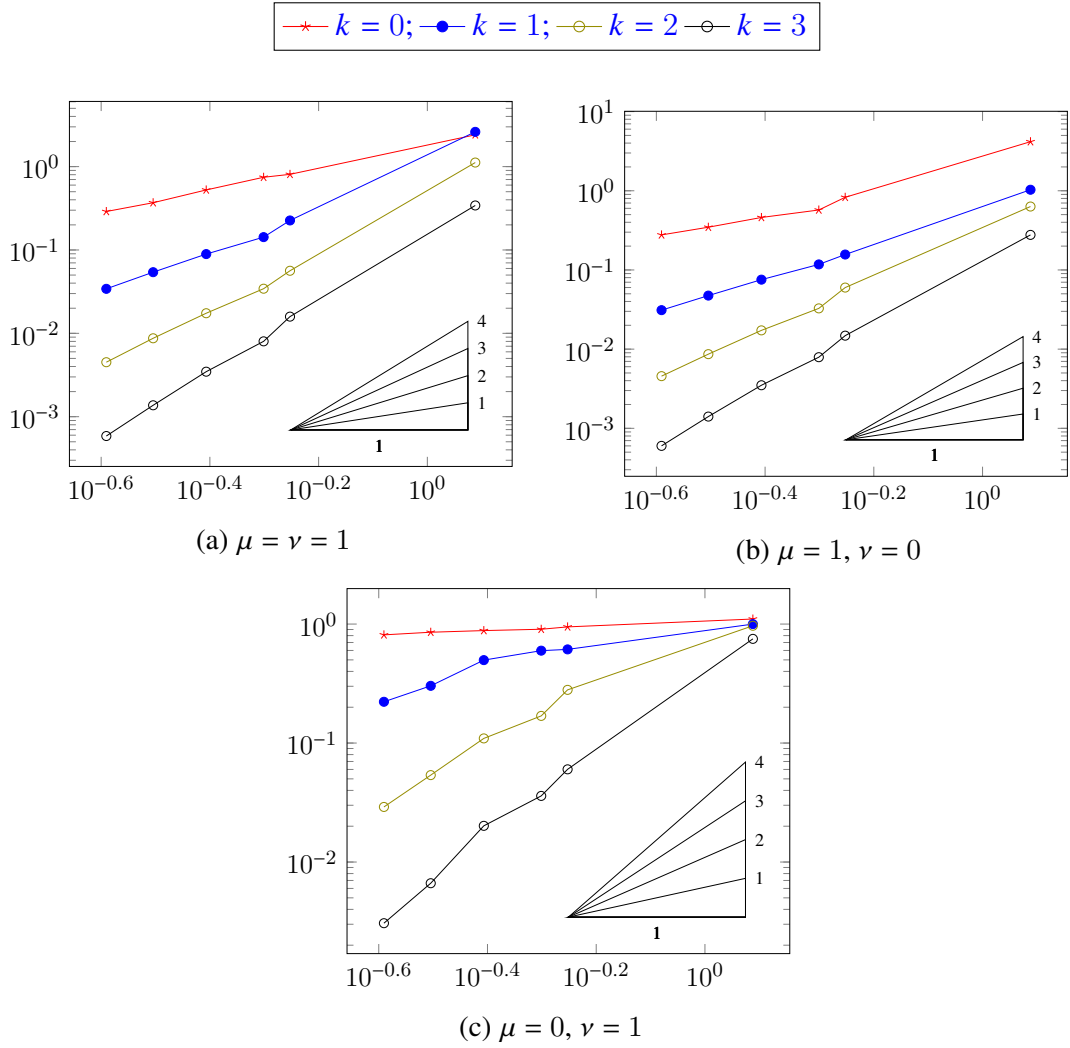
\begin{figure}[!h]\centering
  \ref{br.conv.tets}
  \vspace{0.50cm}\\
  \begin{minipage}{0.45\textwidth}
    \begin{tikzpicture}[scale=0.85]
      \begin{loglogaxis} [legend columns=4, legend to name=br.conv.tets]  
        \logLogSlopeTriangle{0.90}{0.4}{0.1}{1}{black};
        \addplot [mark=star, red] table[x=MeshSize,y=EnergyError] {dat/convergence/Tetgen-Cube-0_k0_sv1_sp1/data_rates.dat};
        \addlegendentry{$k=0$;}
        \logLogSlopeTriangle{0.90}{0.4}{0.1}{2}{black};
        \addplot [mark=*, blue] table[x=MeshSize,y=EnergyError] {dat/convergence/Tetgen-Cube-0_k1_sv1_sp1/data_rates.dat};
        \addlegendentry{$k=1$;}
        \logLogSlopeTriangle{0.90}{0.4}{0.1}{3}{black};
        \addplot [mark=o, olive] table[x=MeshSize,y=EnergyError] {dat/convergence/Tetgen-Cube-0_k2_sv1_sp1/data_rates.dat};
        \addlegendentry{$k=2$}
        \logLogSlopeTriangle{0.90}{0.4}{0.1}{4}{black};
        \addplot [mark=o, black] table[x=MeshSize,y=EnergyError] {dat/convergence/Tetgen-Cube-0_k3_sv1_sp1/data_rates.dat};
        \addlegendentry{$k=3$}
      \end{loglogaxis}            
    \end{tikzpicture}
    \subcaption{$\mu=\nu=1$}
  \end{minipage}
  \begin{minipage}{0.45\textwidth}
    \begin{tikzpicture}[scale=0.85]
      \begin{loglogaxis}
        \logLogSlopeTriangle{0.90}{0.4}{0.1}{1}{black};
        \addplot [mark=star, red] table[x=MeshSize,y=EnergyError] {dat/convergence/Tetgen-Cube-0_k0_sv1_sp0/data_rates.dat};
        \logLogSlopeTriangle{0.90}{0.4}{0.1}{2}{black};
        \addplot [mark=*, blue] table[x=MeshSize,y=EnergyError] {dat/convergence/Tetgen-Cube-0_k1_sv1_sp0/data_rates.dat};
        \logLogSlopeTriangle{0.90}{0.4}{0.1}{3}{black};
        \addplot [mark=o, olive] table[x=MeshSize,y=EnergyError] {dat/convergence/Tetgen-Cube-0_k2_sv1_sp0/data_rates.dat};
        \logLogSlopeTriangle{0.90}{0.4}{0.1}{4}{black};
        \addplot [mark=o, black] table[x=MeshSize,y=EnergyError] {dat/convergence/Tetgen-Cube-0_k3_sv1_sp0/data_rates.dat};
      \end{loglogaxis}            
    \end{tikzpicture}
    \subcaption{$\mu=1$, $\nu=0$}
  \end{minipage}
  \\[0.5em]
  \begin{minipage}{0.45\textwidth}
    \begin{tikzpicture}[scale=0.85]
      \begin{loglogaxis}
        \logLogSlopeTriangle{0.90}{0.4}{0.1}{1}{black};
        \addplot [mark=star, red] table[x=MeshSize,y=EnergyError] {dat/convergence/Tetgen-Cube-0_k0_sv0_sp1/data_rates.dat};
        \logLogSlopeTriangle{0.90}{0.4}{0.1}{2}{black};
        \addplot [mark=*, blue] table[x=MeshSize,y=EnergyError] {dat/convergence/Tetgen-Cube-0_k1_sv0_sp1/data_rates.dat};
        \logLogSlopeTriangle{0.90}{0.4}{0.1}{3}{black};
        \addplot [mark=o, olive] table[x=MeshSize,y=EnergyError] {dat/convergence/Tetgen-Cube-0_k2_sv0_sp1/data_rates.dat};
        \logLogSlopeTriangle{0.90}{0.4}{0.1}{4}{black};
        \addplot [mark=o, black] table[x=MeshSize,y=EnergyError] {dat/convergence/Tetgen-Cube-0_k3_sv0_sp1/data_rates.dat};
      \end{loglogaxis}            
    \end{tikzpicture}
    \subcaption{$\mu=0$, $\nu=1$}
  \end{minipage}
  \caption{Tests of Section \ref{sec:conv.various.regimes}, tetrahedral meshes: errors $E_{u,p}$ with respect to the mesh size $h$}
  \label{fig:conv.tets}
\end{figure}

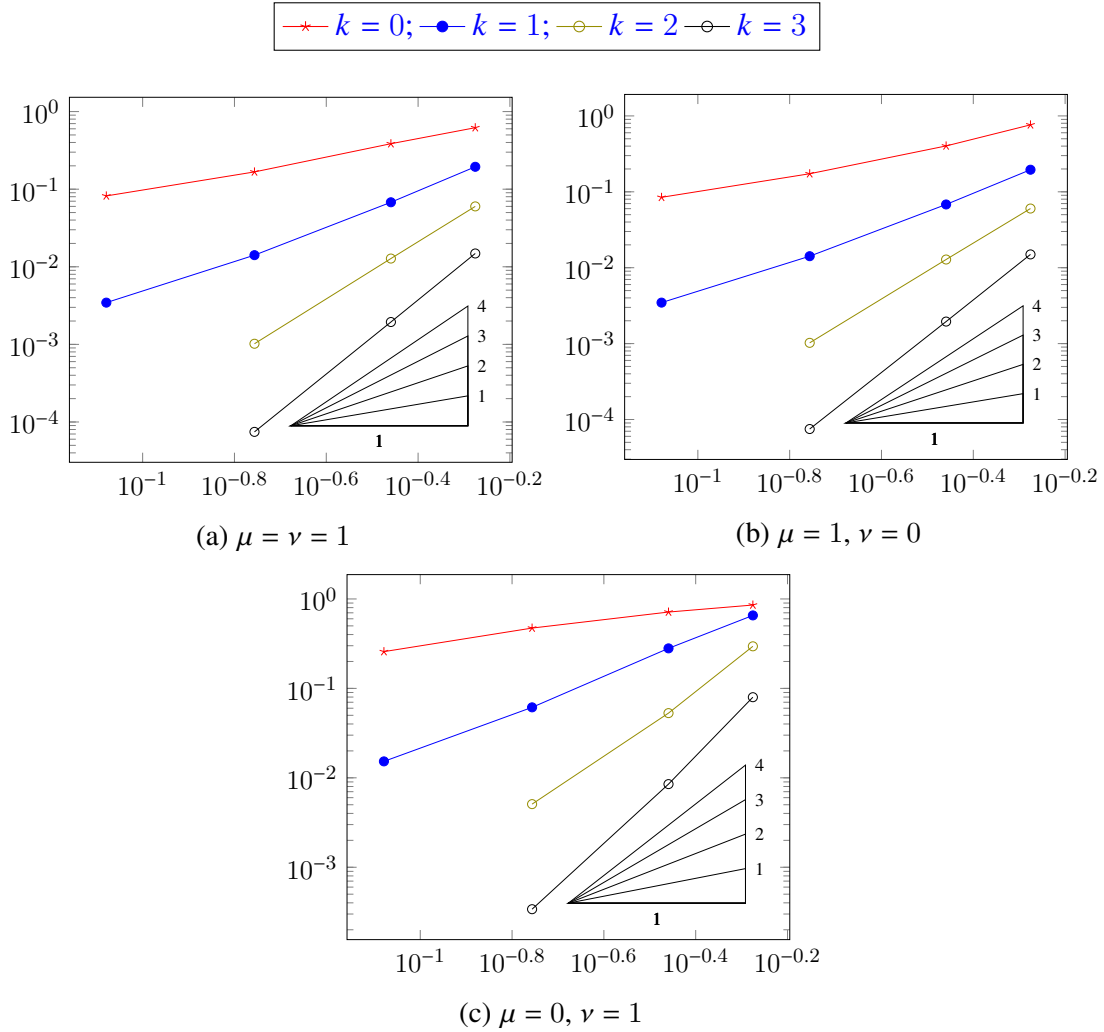
\begin{figure}[!h]\centering
  \ref{br.conv.hexa}
  \vspace{0.50cm}\\
  \begin{minipage}{0.45\textwidth}
    \begin{tikzpicture}[scale=0.85]
      \begin{loglogaxis} [legend columns=4, legend to name=br.conv.hexa]  
        \logLogSlopeTriangle{0.90}{0.4}{0.1}{1}{black};
        \addplot [mark=star, red] table[x=MeshSize,y=EnergyError] {dat/convergence/Random-Hexahedra_k0_sv1_sp1/data_rates.dat};
        \addlegendentry{$k=0$;}
        \logLogSlopeTriangle{0.90}{0.4}{0.1}{2}{black};
        \addplot [mark=*, blue] table[x=MeshSize,y=EnergyError] {dat/convergence/Random-Hexahedra_k1_sv1_sp1/data_rates.dat};
        \addlegendentry{$k=1$;}
        \logLogSlopeTriangle{0.90}{0.4}{0.1}{3}{black};
        \addplot [mark=o, olive] table[x=MeshSize,y=EnergyError] {dat/convergence/Random-Hexahedra_k2_sv1_sp1/data_rates.dat};
        \addlegendentry{$k=2$}
        \logLogSlopeTriangle{0.90}{0.4}{0.1}{4}{black};
        \addplot [mark=o, black] table[x=MeshSize,y=EnergyError] {dat/convergence/Random-Hexahedra_k3_sv1_sp1/data_rates.dat};
        \addlegendentry{$k=3$}
      \end{loglogaxis}            
    \end{tikzpicture}
    \subcaption{$\mu=\nu=1$}
  \end{minipage}
  \begin{minipage}{0.45\textwidth}
    \begin{tikzpicture}[scale=0.85]
      \begin{loglogaxis}
        \logLogSlopeTriangle{0.90}{0.4}{0.1}{1}{black};
        \addplot [mark=star, red] table[x=MeshSize,y=EnergyError] {dat/convergence/Random-Hexahedra_k0_sv1_sp0/data_rates.dat};
        \logLogSlopeTriangle{0.90}{0.4}{0.1}{2}{black};
        \addplot [mark=*, blue] table[x=MeshSize,y=EnergyError] {dat/convergence/Random-Hexahedra_k1_sv1_sp0/data_rates.dat};
        \logLogSlopeTriangle{0.90}{0.4}{0.1}{3}{black};
        \addplot [mark=o, olive] table[x=MeshSize,y=EnergyError] {dat/convergence/Random-Hexahedra_k2_sv1_sp0/data_rates.dat};
        \logLogSlopeTriangle{0.90}{0.4}{0.1}{4}{black};
        \addplot [mark=o, black] table[x=MeshSize,y=EnergyError] {dat/convergence/Random-Hexahedra_k3_sv1_sp0/data_rates.dat};
      \end{loglogaxis}            
    \end{tikzpicture}
    \subcaption{$\mu=1$, $\nu=0$}
  \end{minipage}
  \\[0.5em]
  \begin{minipage}{0.45\textwidth}
    \begin{tikzpicture}[scale=0.85]
      \begin{loglogaxis}
        \logLogSlopeTriangle{0.90}{0.4}{0.1}{1}{black};
        \addplot [mark=star, red] table[x=MeshSize,y=EnergyError] {dat/convergence/Random-Hexahedra_k0_sv0_sp1/data_rates.dat};
        \logLogSlopeTriangle{0.90}{0.4}{0.1}{2}{black};
        \addplot [mark=*, blue] table[x=MeshSize,y=EnergyError] {dat/convergence/Random-Hexahedra_k1_sv0_sp1/data_rates.dat};
        \logLogSlopeTriangle{0.90}{0.4}{0.1}{3}{black};
        \addplot [mark=o, olive] table[x=MeshSize,y=EnergyError] {dat/convergence/Random-Hexahedra_k2_sv0_sp1/data_rates.dat};
        \logLogSlopeTriangle{0.90}{0.4}{0.1}{4}{black};
        \addplot [mark=o, black] table[x=MeshSize,y=EnergyError] {dat/convergence/Random-Hexahedra_k3_sv0_sp1/data_rates.dat};
      \end{loglogaxis}            
    \end{tikzpicture}
    \subcaption{$\mu=0$, $\nu=1$}
  \end{minipage}
  \caption{Tests of Section \ref{sec:conv.various.regimes}, random hexahedral meshes: errors $E_{u,p}$ with respect to $h$}
  \label{fig:conv.hexa}
\end{figure}

Table \ref{tab:condnum} presents the condition numbers for $k=1$, on one member of each mesh family (the second for the Voronoi and tetrahedral meshes, the first for the random hexahedral meshes). These numbers show that the conditioning of the scheme is also robust in the limits $\mu\to 0$ or $\nu\to 0$; actually, this conditioning appears to be mostly driven by the strength of the Stokes terms: the magnitudes of the condition numbers are comparable in the balanced Stokes--Darcy and the pure Stokes regime, and much lower in the pure Darcy regime.
This observation is consistent with the fact that, based on standard estimates for pure diffusion problems (see, e.g., \cite{Ern.Guermond:06} for finite elements and \cite{Badia.Droniou.ea:22} for the HHO scheme), the conditioning for the Darcy problem is expected to scale with $h^{-1}$, as opposed to $h^{-2}$ for the Stokes problem.

\begin{table}\centering
  \begin{tabular}{c|c|c|c}
    \toprule
    Mesh & Voronoi & Tetrahedral & Random Hex.\\
    Num. of elements &  125 & 216 & 176 \\
    \midrule
    $\mu=\nu=1$ (balanced) & $1.4\times 10^4$ & $1.5\times 10^{5}$ & $1.1\times 10^4$\\
    $\mu=1$, $\nu=0$ (pure Stokes) & $1.3\times 10^{4}$ & $1.5\times 10^5$ & $1.1\times 10^4$\\
    $\mu=0$, $\nu=1$ (pure Darcy) & $190$ & $1.6\times 10^3$ & $412$\\
    \bottomrule
  \end{tabular}
  \caption{Condition numbers of the matrices in the tests of Section \ref{sec:conv.various.regimes}, for $k=1$ and one member of each mesh family.\label{tab:condnum}}
\end{table}

\begin{remark}[Numerical handling of $\Cf{T}=+\infty$]
  In the numerical implementation, the case $\Cf{T}=+\infty$ is handled using a threshold. Specifically, we fix $\epsilon=10^{-14}$ and use the following numerical value for $\Cf{T}$:
  \[
  \hCf{T}=\begin{cases}
  \max\left(\epsilon; \frac{\nu_T h_T^2}{\mu_T}\right) & \text{if $\mu_T>\epsilon$},
  \\
  \epsilon^{-1} & \text{if $\mu_T\le \epsilon$}.
  \end{cases}
  \]
  This choice ensures that $\hCf{T}$ is always well-defined (no division by $0$ occurs), and remains larger than $\epsilon$. This second constraint is required because the scheme uses $\hCf{T}^{-1}$, which needs to remain computable and not lead to a division by 0. Using $\hCf{T}$ ensures that we have a computable term in all circumstances, without having to do specific tests each time $\Cf{T}$ or $\Cf{T}^{-1}$ is required. 
  
  We note that the scheme only depends on the numerical friction coefficients through $\min(1,\hCf{T})$, $\min(1,\hCf{T}^{-1})$, $\tv{\hCf{T}\ge 1}$ and $\tv{\hCf{T}<1}$. The truth values computed with $\Cf{T}$ or $\hCf{T}$ are strictly the same provided that $\epsilon<1$. The substitution of $\hCf{T}$ for $\Cf{T}$ in the minima terms (which leads, for example, to using $\min(1,\hCf{T})=\epsilon=10^{-14}$ instead of 0 when $\nu_T=0$) has almost no impact on the computed numerical solution.
\end{remark}

\subsection{Convergence for discontinuous viscosity and permeability}\label{sec:conv.disc}

In this section, we still present convergence results towards a manufactured analytical solution, but in a more challenging setting than in the previous section: the viscosity and permeability are discontinuous in the domain, and the regime degenerates to a full Darcy limit in part of the domain. Specifically, we consider $\Omega=(0,1)^3$ split into a Stokes-dominated subdomain $\Omega_{\stokes}=(0,\nicefrac12)\times(0,1)^2$ and a pure Darcy subdomain $\Omega_{\darcy}=(\nicefrac12,1)\times(0,1)^2$. The exact pressure is still chosen as \eqref{eq:trigo.p}, while the exact velocity is $\bvec{u}=\bvec{u}_0+\chi_{\stokes}\bvec{u}_{\stokes} + \chi_{\darcy}\bvec{u}_{\darcy}$, where $\chi_i$ is the characteristic function of $\Omega_i$ and, for all $(x,y,z)\in\Omega$,
\[
\begin{gathered}
  \bvec{u}_0(x,y,z)=\begin{bmatrix}
  \exp(-y-z) \\ \sin(\pi y)\sin(\pi z) \\ yz
  \end{bmatrix},\quad
  \bvec{u}_{\stokes}(x,y,z)=\cos(\pi x)(x-0.5)\begin{bmatrix}
    y+z \\ y +\cos(\pi z) \\ \sin(\pi y)
  \end{bmatrix},
  \\
  \bvec{u}_{\darcy}(x,y,z)=\cos(\pi x)(x-0.5)\begin{bmatrix}
    \sin(\pi y)\sin(\pi z) \\ z^3 \\ y^2z^2
  \end{bmatrix}.
\end{gathered}
\]
The medium parameters are $(\mu,\nu)=(1,10^7)$ in $\Omega_{\stokes}$, and $(\mu,\nu)=(0,10^2)$ in $\Omega_{\darcy}$. The simulations are run on a family of Cartesian meshes (compatible with the interface $x=\nicefrac12$) respectively made of $2^3,4^3,8^3,16^3$ and $32^3$ cubes.

As $\bvec{u}_0$ does not depend on $x$, and thanks to the presence of the term $\cos(\pi x)(x-0.5)$, $\bvec{u}$ is continuous across the interface $x=\nicefrac12$ and $\GRAD \bvec{u}\normal=\bvec{0}$ on that interface; hence, $\bvec{u}\in \bvec{H}^1(\Omega;\Real^d)\cap \bvec{H}^\infty(\Th;\Real^d)$ and the source term $\bvec{f}=-\VDIV(\mu\GRAD u) + \nu\bvec{u}+\GRAD p$ belongs to $\bvec{L}^2(\Omega;\Real^d)$ (no singularity appears at the interface $x=\nicefrac12$). 

The Stokes stabilisation in \eqref{eq:a.stokes.T} has not been scaled (as scaling did not show in this case a significant impact on the magnitudes of the errors), but we have applied a scaling of $10^{-(k+1)}$ to the Darcy stabilisation in \eqref{eq:a.darcy.T}.
In Figure~\ref{fig:conv.disc}, we display the errors $E_{u,p}$ for polynomial degrees $k\in\{0,1,2\}$ as a function of the meshsize.
We notice a super-convergence effect linked to the use of Cartesian meshes: the energy error decays as $h^{k+2}$ instead of $h^{k+1}$ (the rate is even closer to $h^6$ than $h^5$ for $k=3$). However, on coarse meshes and/or for a small degree $k$, the magnitude of the error is quite large, polluted by a bad approximation of the pressure.

\begin{figure}[h!]\centering
   \ref{br.conv.disc}
   \vspace{0.50cm}\\  
  \begin{tikzpicture}[scale=0.85]
      \begin{loglogaxis}[legend columns=4, legend to name=br.conv.disc]  
        \logLogSlopeTriangle{0.90}{0.4}{0.1}{1}{black};
        \addplot [mark=star, red] table[x=MeshSize,y=EnergyError] {dat/convergence-discontinuous/Cubic-Cells_k0_sv1_sp1/data_rates.dat};
        \addlegendentry{$k=0$;}
        \logLogSlopeTriangle{0.90}{0.4}{0.1}{2}{black};
        \addplot [mark=*, blue] table[x=MeshSize,y=EnergyError] {dat/convergence-discontinuous/Cubic-Cells_k1_sv1_sp1/data_rates.dat};
        \addlegendentry{$k=1$;}
        \logLogSlopeTriangle{0.90}{0.4}{0.1}{3}{black};
        \addplot [mark=o, olive] table[x=MeshSize,y=EnergyError] {dat/convergence-discontinuous/Cubic-Cells_k2_sv1_sp1/data_rates.dat};
        \addlegendentry{$k=2$}
        \logLogSlopeTriangle{0.90}{0.4}{0.1}{4}{black};
        \addplot [mark=o, black] table[x=MeshSize,y=EnergyError] {dat/convergence-discontinuous/Cubic-Cells_k3_sv1_sp1/data_rates.dat};
        \addlegendentry{$k=3$}
        \logLogSlopeTriangle{0.90}{0.4}{0.1}{5}{black};
      \end{loglogaxis}            
    \end{tikzpicture}
  \caption{Tests of Section \ref{sec:conv.disc} (discontinuous viscosity and permeability): errors $E_{u,p}$ with respect to the mesh size $h$}
  \label{fig:conv.disc}
\end{figure}
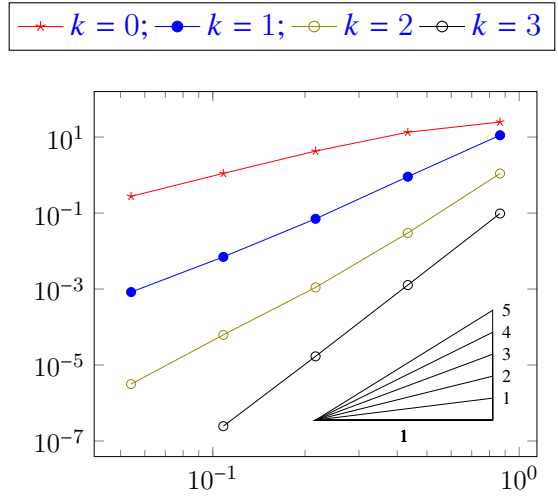

\subsection{Lid-driven cavity in porous medium}\label{sec:lid.cavity}

The tests in this section are inspired by situations described in \cite{Bernardi.Hecht.ea:10,Alvarez.Gatica.ea:16}. In these references, a V-crack is realised at the top of a homogeneous porous medium, and plays the role of a lid-driven cavity (with a Stokes-dominated model in this cavity, while the rest of the medium is modelled using pure Darcy flow), and low-order mixed finite elements on triangles/tetrahedra are used to simulate the flow. 

We consider here a cavity, where a pure Stokes flow occurs with viscosity $10^{-2}$, sitting in a porous medium with pure Darcy flow; the porous medium is heterogeneous, with permeability equal to $10^{-7}$ in the surrounding ``box'' and $10^{-2}$ in a ``wedge'' at the outset of the cavity; see Figure \ref{fig:cavity-domain}, left. The domain is $\Omega=(-1,2)\times (-1,2)\times(-2,0)$, with the cavity being $(0,1)^3$ and the wedge $\left\{(x,y,z)\in\Real^3\,:\,1<x<2\,,\;0<y<1\,,\;0.25(x-1)-0.75<z<0\right\}$. The domain has been meshed using \texttt{gmsh} (\href{https://gmsh.info/}{https://gmsh.info/}), with cubic elements in the cavity, and mostly tetrahedral elements in the porous medium (together with a few pyramidal elements around the interface between these two regions); see Figure \ref{fig:cavity-domain}, right, for an example of mesh, and Table \ref{tab:meshes} for more insight into the features of the mesh family. Notice, in particular, that the meshes are not quasi-uniform, and include small tetrahedra and pyramids, mostly located at the interfaces between the various subregions in the domain. The files describing the geometry are available in the \texttt{HArDCore} repository.
The scalings of the stabilisation terms are 3 and 0.3, as in Section \ref{sec:conv.various.regimes}.

\begin{figure}\centering
  \begin{minipage}{0.45\textwidth}
    \includegraphics[width=\textwidth]{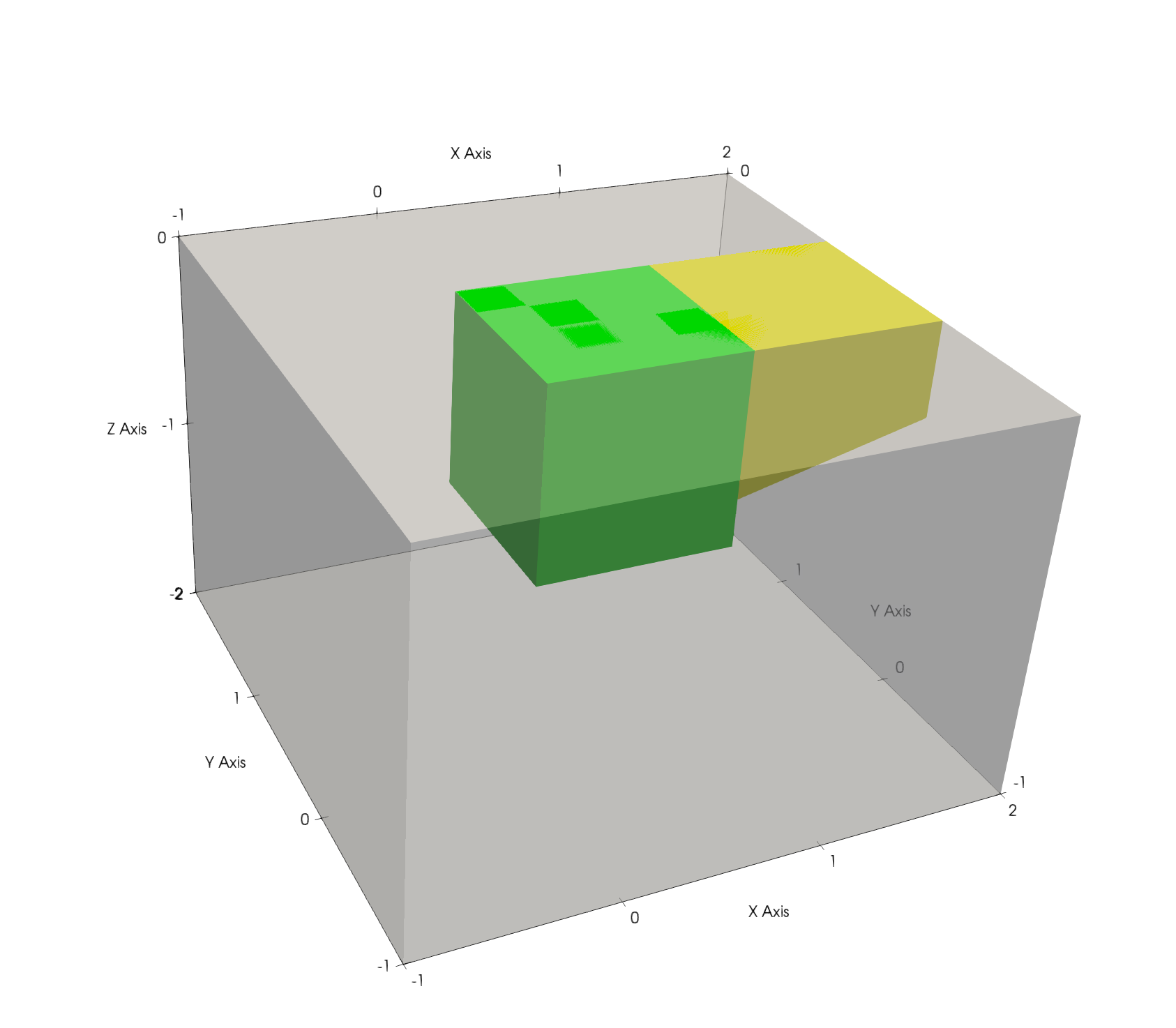}
  \end{minipage}
  \hspace{0.25cm}
  \begin{minipage}{0.45\textwidth}
    \includegraphics[width=\textwidth]{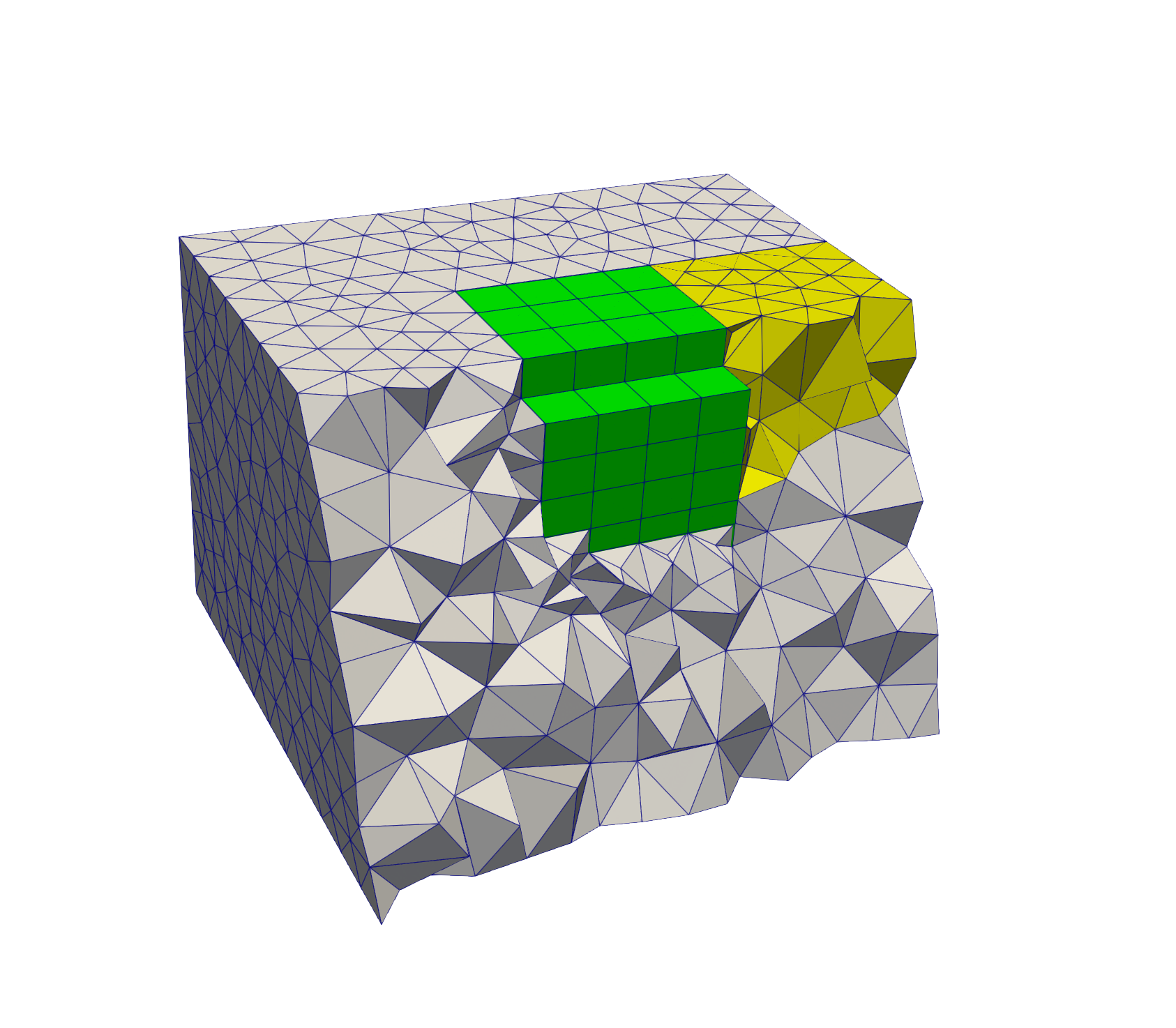}
  \end{minipage}
  \caption{Left: geometry of the cavity (green) inside the porous medium, comprising a wedge (green) and the surrounding box (shadow). Right: example of mesh used in the simulations.}
  \label{fig:cavity-domain}
\end{figure}

\begin{table}\centering
  \begin{tabular}{c|c|c|c|c|c}
    \toprule
    Mesh index & 1 & 2 & 3 & 4 & 5 \\
    \midrule
    Mesh size $h$ & 0.95 & 0.61 & 0.54 & 0.22 & 0.17\\
    $\min_{T\in\Th}h_T$ & 0.31 & 0.15 & 0.15 & 0.05 & 0.05\\
    Num. of elements & 1,326 & 5,935 & 7,963 & 99,748 & 201,653\\
    \bottomrule
  \end{tabular}
  \caption{Characteristics of the mesh family for the tests in Section \ref{sec:lid.cavity}.\label{tab:meshes}}
\end{table}

The forcing term $\bvec{f}=(0,0,-0.98)$ represents the gravity, while we fix $g=0$. The boundary conditions on the velocity are $\bvec{u}(x,y,z)=(x(1-x),0,0)$ on top of the cavity, and $\bvec{u}=\bvec{0}$ elsewhere. Figure \ref{fig:cavity-stream} displays the streamlines obtained on the third mesh in the family with $k=2$. These streamlines show the usual form of circulation inside the cavity for a pure Stokes lid-driven cavity, which drives some (slower) motion inside the wedge section of the porous medium; given the very low permeability of the rest of the medium, little material is transferred into this medium, in which the velocity remains almost zero; in the region $z<-1$ below the cavity, for example, the maximum of the vertex values (obtained by averaging the potential reconstructions in each element surrounding the vertices) of the velocity is below $6\times 10^{-5}$.

\begin{figure}\centering
    \includegraphics[width=.75\textwidth]{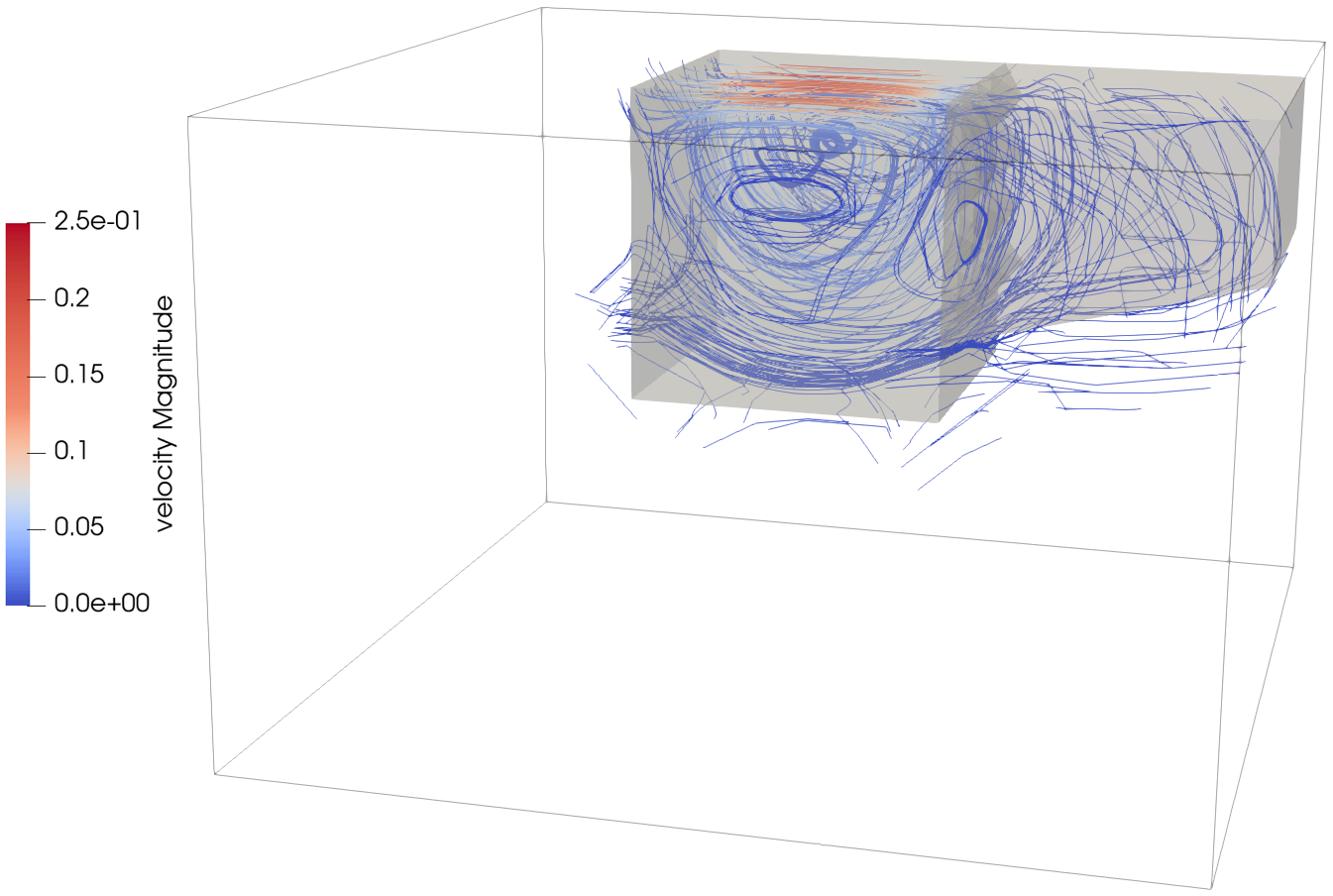}
  \caption{Streamlines for the test case of Section \ref{sec:lid.cavity} (cavity and wedge displayed in shadow).}
  \label{fig:cavity-stream}
\end{figure}

To qualitatively assess the impact of increasing the degree of approximation $k$ of the method, we evaluate for various meshes and degrees the flux across the interface $\Gamma=\{1\}\times (0,1)\times (-0.75,0)$ between the cavity and the wedge. All the meshes $\Mh$ we consider are compatible with this interface, that is, setting $\Gamma_h=\{F\in\Fh\,:\,F\subset \Gamma\}$ we have $\overline{\Gamma} = \bigcup_{F\in\Gamma_h} \overline{F}$.
We then consider the numerical convergence of the numerical flux defined by
\[
\sum_{F\in\Gamma_h}\int_F \bvec{u}_F\cdot\normal_{\Gamma},
\]
where $\normal_\Gamma=(1,0,0)$ is the unit normal to $\Gamma$ pointing inside the wedge.
The values of this flux for different degrees of approximations $k$ are provided in Figure \ref{fig:conv.flux} (left: w.r.t.\ the mesh size; right: w.r.t.\ the total wall time, including assembly and solution time -- notice that the \texttt{HArDCore} library uses multi-threading processes). 
These results show that the lowest order of approximation struggles to provide what seems to be a correct value of the flux, and that the mesh must be extremely fine to get close to this value; on the contrary, for $k\ge 1$, all results, even on coarse meshes and with a low computational cost, seem to  be very close to a given value, indicating that convergence has already occurred. These results corroborate a conclusion already highlighted in \cite{Anderson.Droniou:18}: even on a problem where the solution is not expected to be very regular, slightly increasing the order of approximation of the scheme (here, going from $k=0$ to $k=1$) can lead to a vastly improved accuracy of the numerical outputs at a very low computational cost.

\begin{figure}[!h]\centering
  \ref{br.flux.conv}
  \vspace{0.50cm}\\
  \begin{minipage}{0.45\textwidth}
    \begin{tikzpicture}[scale=0.85]
      \begin{axis} [legend columns=4, legend to name=br.flux.conv]  
        \addplot [mark=star, red] table[x=MeshSize,y=Flux] {dat/cavity/CubeCavityWedge-tets-hexas_k0/flux_file.dat};
        \addlegendentry{$k=0$;}
        \addplot [mark=*, blue] table[x=MeshSize,y=Flux] {dat/cavity/CubeCavityWedge-tets-hexas_k1/flux_file.dat};
        \addlegendentry{$k=1$;}
        \addplot [mark=o, olive] table[x=MeshSize,y=Flux] {dat/cavity/CubeCavityWedge-tets-hexas_k2/flux_file.dat};
        \addlegendentry{$k=2$}
        \addplot [mark=o, black] table[x=MeshSize,y=Flux] {dat/cavity/CubeCavityWedge-tets-hexas_k3/flux_file.dat};
        \addlegendentry{$k=3$}
      \end{axis}            
    \end{tikzpicture}
    \subcaption{w.r.t.~mesh size}
  \end{minipage}
  \begin{minipage}{0.45\textwidth}
    \begin{tikzpicture}[scale=0.85]
      \begin{loglogaxis} 
        \addplot [mark=star, red] table[x=TwallTotal,y=Flux] {dat/cavity/CubeCavityWedge-tets-hexas_k0/flux_file.dat};
        \addplot [mark=*, blue] table[x=TwallTotal,y=Flux] {dat/cavity/CubeCavityWedge-tets-hexas_k1/flux_file.dat};
        \addplot [mark=o, olive] table[x=TwallTotal,y=Flux] {dat/cavity/CubeCavityWedge-tets-hexas_k2/flux_file.dat};
        \addplot [mark=o, black] table[x=TwallTotal,y=Flux] {dat/cavity/CubeCavityWedge-tets-hexas_k3/flux_file.dat};
      \end{loglogaxis}            
    \end{tikzpicture}
    \subcaption{w.r.t.~wall time (seconds)}
  \end{minipage}
  \caption{Convergence of flux values from the cavity to the wedge.}
  \label{fig:conv.flux}
\end{figure}
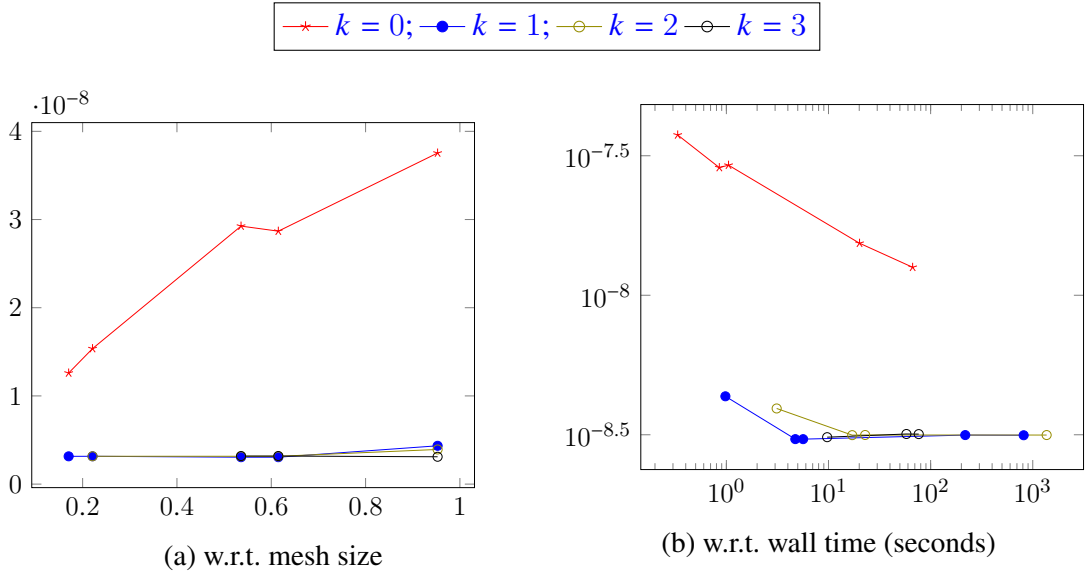


\section{Analysis}\label{sec:analysis}

\subsection{Stability}\label{sec:analysis:stability}

\begin{proposition}[$\norm{\mu,\nu,h}{{\cdot}}$-boundedness of the interpolator]
  With $\beta$ as in Lemma \ref{lem:inf-sup}, it holds, for all $\bvec{v}\in\bvec{H}^1(\Omega;\Real^d)$,
  \begin{equation}\label{eq:boundedness:IT:norm.mu.nu}
    \beta\norm{\mu,\nu,h}{\Ih\bvec{v}}\lesssim\norm{\bvec{H}^1(\Omega;\Real^d)}{\bvec{v}}.
  \end{equation}
\end{proposition}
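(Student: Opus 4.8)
The plan is to estimate the squared energy norm $\norm{\mu,\nu,h}{\Ih\bvec{v}}^2 = \norm{\mu,h}{\Ih\bvec{v}}^2 + \norm{\nu,h}{\Ih\bvec{v}}^2 = \sum_{T\in\Th}\big(\mu_T\norm{\stokes,T}{\IT\bvec{v}}^2 + \nu_T\norm{\darcy,T}{\IT\bvec{v}}^2\big)$ element by element, exploiting the fact that all the regime-dependent cutoff factors appearing in $a_{\stokes,T}$ and $a_{\darcy,T}$ are bounded by $1$. Writing $\uvec{v}_h\coloneq\Ih\bvec{v}$, so that $\bvec{v}_T = \vlproj{k}{T}\bvec{v}$ and $\bvec{v}_F = \vlproj{k}{F}\bvec{v}$, the first observation is that $\mu_T\le\overline{\mu}\le\overline{\mu}+\overline{\nu} = \beta^{-2}$ and, likewise, $\nu_T\le\beta^{-2}$, so that $\beta^2\norm{\mu,\nu,h}{\Ih\bvec{v}}^2\le\sum_{T\in\Th}\big(\norm{\stokes,T}{\IT\bvec{v}}^2 + \norm{\darcy,T}{\IT\bvec{v}}^2\big)$. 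It therefore suffices to bound each local term by $\norm{\bvec{H}^1(T;\Real^d)}{\bvec{v}}^2$ with a hidden constant independent of $\Cf{T}$ (hence of $\mu$ and $\nu$), and to sum over $T\in\Th$.

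For the Stokes seminorm I would invoke the upper bound \eqref{eq:seminorm.1.T:upper.bound} of Lemma~\ref{lem:hho.stabilisation}, which already absorbs $\min(1,\Cf{T}^{-1})\le 1$, thereby reducing matters to estimating $\norm{\btens{L}^2(T;\Real^{d\times d})}{\GRAD\vlproj{k}{T}\bvec{v}}^2 + h_T^{-1}\sum_{F\in\FT}\norm{\bvec{L}^2(F;\Real^d)}{\vlproj{k}{T}\bvec{v} - \vlproj{k}{F}\bvec{v}}^2$. The first term is controlled by $\seminorm{\bvec{H}^1(T;\Real^d)}{\bvec{v}}^2$ through the approximation/$H^1$-stability properties of $\vlproj{k}{T}$; for the face terms, inserting $\pm\bvec{v}$ on each $F\in\FT$ and using continuous trace inequalities (cf.\ \cite[Lemma~1.31]{Di-Pietro.Droniou:20}) together with the approximation properties of $\vlproj{k}{T}$ and $\vlproj{k}{F}$ gives $h_T^{-1}\norm{\bvec{L}^2(F;\Real^d)}{\vlproj{k}{T}\bvec{v} - \vlproj{k}{F}\bvec{v}}^2\lesssim\seminorm{\bvec{H}^1(T;\Real^d)}{\bvec{v}}^2$, and $\card(\FT)\lesssim 1$ closes this part.

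For the Darcy seminorm, the stabilisation contribution carries the factor $\min(1,\Cf{T})\le 1$, so it is enough to bound $\norm{\bvec{U},T}{\IT(\bvec{v} - \PDT\IT\bvec{v})}^2$ (using linearity of $\IT$ to rewrite $\IT\bvec{v} - \IT\PDT\IT\bvec{v}$): applying the interpolator bound \eqref{eq:boundedness:IT:norm.U} to $\bvec{v} - \PDT\IT\bvec{v}$ and then the approximation estimate \eqref{eq:approximation:PDT} with $r=0$ (for $m=0$ and $m=1$), together with $h_T\le 1$, yields a bound by $\seminorm{\bvec{H}^1(T;\Real^d)}{\bvec{v}}^2$. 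For the consistency contribution $\norm{\bvec{L}^2(T;\Real^d)}{\tPDT\IT\bvec{v}}^2$ I would split along the definition \eqref{eq:tPDT} of $\tPDT$: if $\Cf{T}<1$ then $\tPDT\IT\bvec{v} = \vlproj{k}{T}\bvec{v}$, bounded by $\norm{\bvec{L}^2(T;\Real^d)}{\bvec{v}}$ by $L^2$-stability of the projector; if $\Cf{T}\ge 1$ then $\tPDT\IT\bvec{v} = \PDT\IT\bvec{v}$, bounded by $\norm{\bvec{L}^2(T;\Real^d)}{\bvec{v}} + h_T\seminorm{\bvec{H}^1(T;\Real^d)}{\bvec{v}}$ via \eqref{eq:approximation:PDT:condition.m=0} and again $h_T\le 1$. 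In both regimes the bound is $\lesssim\norm{\bvec{H}^1(T;\Real^d)}{\bvec{v}}^2$, uniformly in $\Cf{T}$.

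Summing all the local bounds over $T\in\Th$ and using that the family $\{\norm{\bvec{H}^1(T;\Real^d)}{\bvec{v}}^2\}_{T\in\Th}$ sums to $\norm{\bvec{H}^1(\Omega;\Real^d)}{\bvec{v}}^2$ gives $\beta^2\norm{\mu,\nu,h}{\Ih\bvec{v}}^2\lesssim\norm{\bvec{H}^1(\Omega;\Real^d)}{\bvec{v}}^2$, and taking square roots yields \eqref{eq:boundedness:IT:norm.mu.nu}. The only point demanding care is the bookkeeping of the cutoff factors — checking that in each term (Stokes vs.\ Darcy, consistency vs.\ stabilisation) the regime-dependent coefficient is genuinely $\le 1$, so that the resulting constants do not depend on $\mu$ or $\nu$ — together with the harmless but necessary observation that the Darcy stabilisation argument is $\IT$ applied to the approximation error $\bvec{v} - \PDT\IT\bvec{v}$; none of this is deep, so I do not anticipate a real obstacle.
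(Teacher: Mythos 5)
Your proof is correct and follows essentially the same route as the paper: the same element-wise decomposition into Stokes consistency/stabilisation and Darcy consistency/stabilisation contributions, the same use of \eqref{eq:seminorm.1.T:upper.bound}, \eqref{eq:boundedness:IT:norm.U}, \eqref{eq:approximation:PDT}, and \eqref{eq:approximation:PDT:condition.m=0} with the same regime split and the same absorption of $\mu_T,\nu_T\le\beta^{-2}$. The only cosmetic difference is that you spell out by hand the bound on the right-hand side of \eqref{eq:seminorm.1.T:upper.bound} evaluated at $\IT\bvec{v}$ (via $\pm\bvec{v}$ insertions and trace inequalities), where the paper simply cites \cite[Eq.~(8.25)]{Di-Pietro.Droniou:20}.
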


\begin{proof}
  It holds, by definition,
  $
  \norm{\mu,\nu,h}{\Ih\bvec{v}}^2
  = \sum_{T\in\Th}\left[
  \mu_T\term_1(T) + \nu_T\term_2(T) + \nu_T\term_3(T)
  \right]
  $
  with
  \[
  \begin{gathered}
    \term_1(T) \coloneq
    \norm{\btens{L}^2(T;\Real^{d\times d})}{\GT\IT\bvec{v}}^2
    + \frac{\min(1,\Cf{T}^{-1})}{h_T^2}\norm{\bvec{U},T}{\IT(\bvec{v} - \PST\IT\bvec{v})}^2,
    \\
    \term_2(T) \coloneq
    \norm{\bvec{L}^2(T;\Real^d)}{\tPDT\IT\bvec{v}}^2,\qquad
    \term_3(T) \coloneq
    \min(1,\Cf{T})\norm{\bvec{U},T}{\IT(\bvec{v} - \PDT\IT\bvec{v})}^2.
  \end{gathered}
  \]
  For the first term, combining \eqref{eq:seminorm.1.T:upper.bound} and the fact that the right-hand side of this expression written for $\uvec{v}_T = \IT\bvec{v}$ is $\lesssim \seminorm{\bvec{H}^1(T;\Real^d)}{\bvec{v}}$ by \cite[Eq.~(8.25)]{Di-Pietro.Droniou:20}, we obtain $\term_1(T) \lesssim\seminorm{\bvec{H}^1(T;\Real^d)}{\bvec{v}}^2$.
  For the second term, if $\Cf{T}<1$, we can write $\term_2(T)= \norm{\bvec{L}^2(T;\Real^d)}{\vlproj{k}{T}\bvec{v}}^2\le\norm{\bvec{L}^2(T;\Real^d)}{\bvec{v}}^2$ using the boundedness of $\vlproj{k}{T}$, while, if $\Cf{T}\ge 1$,
  \eqref{eq:approximation:PDT:condition.m=0} gives $\term_2(T)\lesssim\norm{\bvec{L}^2(T;\Real^d)}{\bvec{v}}^2 + h_T^2\seminorm{\bvec{H}^1(T;\Real^d)}{\bvec{v}}^2\le\norm{\bvec{H}^1(T;\Real^d)}{\bvec{v}}^2$, where the conclusion follows observing that $h_T\le 1$ since $\Omega$ has unit diameter by assumption.
  Finally, for the third term, using $\min(1,\Cf{T})\le 1$ and invoking the boundedness \eqref{eq:boundedness:IT:norm.U} of the interpolator in the $\norm{\bvec{U},T}{{\cdot}}$-norm followed by the approximation properties \eqref{eq:approximation:PDT} of $\PDT\IT$ with $(r,m) = (0,0)$ and $(r,m)=(0,1)$ yields
  \[
  \term_3(T)
  \lesssim\norm{\bvec{L}^2(T;\Real^d)}{\bvec{v} - \PDT\IT\bvec{v}}^2
  + h_T^2\seminorm{\bvec{H}^1(T;\Real^d)}{\bvec{v} - \PDT\IT\bvec{v}}^2
  \lesssim
  h_T^2\seminorm{\bvec{H}^1(T;\Real^d)}{\bvec{v}}^2.
  \]
  Gathering the above estimates and recalling the bounds \eqref{eq:mu.nu:bounds} on $\mu$ and $\nu$, the result follows.
\end{proof}

\begin{proof}[Proof of Lemma~\ref{lem:inf-sup}]
  Classical consequence of the continuous inf-sup condition for the divergence $\DIV:\bvec{H}^1_0(\Omega;\Real^d)\to L^2_0(\Omega)$ (see, e.g., \cite{Girault.Raviart:86,Bogovskii:80,Solonnikov:01,Duran.Muschietti:01}) along with the Fortin properties for the interpolator corresponding to \eqref{eq:consistency:bh} and \eqref{eq:boundedness:IT:norm.mu.nu};
  see, e.g., \cite[Section~5.4.3]{Boffi.Brezzi.ea:13} for further details.
\end{proof}

\subsection{Convergence}\label{sec:analysis:convergence}

The purpose of this section is to prove Theorem \ref{thm:error.estimate}.
The proof rests on consistency results for the Stokes, Darcy, and coupling bilinear forms as well as the forcing  term linear form which make the object of the following subsections.

\subsubsection{Consistency of the Stokes bilinear form}

\begin{lemma}[Consistency of the Stokes bilinear form]
  Given $\bvec{w}\in\bvec{H}_0^1(\Omega;\Real^d)$ such that ${\VDIV(\mu\GRAD\bvec{w})}\in\bvec{L}^2(\Omega;\Real^d)$, let the Stokes consistency error linear form $\Errsto(\bvec{w};\cdot):\UhD\to\Real$ be such that, for all $\uvec{v}_h\in\UhD$,
  \begin{equation}\label{eq:Errsto}
    \Errsto(\bvec{w};\uvec{v}_h)
    \coloneq
    -\sum_{T\in\Th}\int_T\VDIV(\mu_T\GRAD\bvec{w})\cdot\bvec{v}_T
    - a_{\mu,h}(\Ih\bvec{w},\uvec{v}_h).
  \end{equation}
  Then, further assuming $\bvec{w}\in\bvec{H}^{r+2}(\Th;\Real^d)$ for some $r\in\{0,\ldots,k\}$, it holds
  \begin{equation}\label{eq:estimate:Errsto}
    \norm{\mu,\nu,h,*}{\Errsto(\bvec{w};\cdot)}
    \lesssim\left(
    \sum_{T\in\Th}\mu_T\min(1,\Cf{T}^{-1})h_T^{2(r+1)}\seminorm{\bvec{H}^{r+2}(T;\Real^d)}{\bvec{w}}^2
    \right)^{\nicefrac12}.
  \end{equation}
\end{lemma}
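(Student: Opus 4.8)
The standard route for HHO consistency proofs applies. First I would rewrite the target quantity by testing the strong equation against the element components of $\uvec{v}_h$ and integrating by parts. The key move is to exploit the definition \eqref{eq:GT} of the discrete gradient: taking $\btens{\tau}=\GT\uvec{v}_T$ in that definition relates $\int_T \GT\IT\bvec{w}:\GT\uvec{v}_T$ to volume and face integrals involving $\vlproj{k}{T}(\mu_T\GRAD\bvec{w})$ and $\vlproj{k}{F}(\mu_T\GRAD\bvec{w}\,\normal_F)$; similarly, integrating $-\int_T \VDIV(\mu_T\GRAD\bvec{w})\cdot\bvec{v}_T$ by parts produces corresponding untruncated volume and face integrals involving the exact $\mu_T\GRAD\bvec{w}$. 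Since $\bvec{w}\in\bvec{H}^1_0(\Omega)$, the jumps of $\mu_T\GRAD\bvec{w}\,\normal_F$ telescope across interfaces and vanish on $\partial\Omega$ when combined with $\bvec{v}_F$ single-valued (resp.\ zero on $\Fhb$). After this bookkeeping, $\Errsto(\bvec{w};\uvec{v}_h)$ reduces to a sum over $T\in\Th$ of two pieces: (i) a consistency defect of the discrete-gradient term, expressible through $(\mathrm{Id}-\vlproj{k}{T})(\mu_T\GRAD\bvec{w})$ on $T$ and $(\mathrm{Id}-\vlproj{k}{F})(\mu_T\GRAD\bvec{w}\,\normal_F)$ on the faces, tested against $\GT\uvec{v}_T$ and $\bvec{v}_F-\bvec{v}_T$ respectively; and (ii) the stabilisation term $-\mu_T\frac{\min(1,\Cf{T}^{-1})}{h_T^2}(\IT(\bvec{w}-\PST\IT\bvec{w}),\uvec{v}_T-\IT\PST\uvec{v}_T)_{\bvec{U},T}$.

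Next I would estimate each piece. For (i), Cauchy--Schwarz, the approximation properties of $\vlproj{k}{T}$ and $\vlproj{k}{F}$ (with trace inequalities to pass from face to element norms), and the regularity $\bvec{w}\in\bvec{H}^{r+2}(T;\Real^d)$ give a bound of order $\mu_T^{1/2}h_T^{r+1}\seminorm{\bvec{H}^{r+2}(T;\Real^d)}{\bvec{w}}$ times $\big(\norm{\btens{L}^2(T)}{\GT\uvec{v}_T}^2 + h_T^{-1}\sum_{F\in\FT}\norm{\bvec{L}^2(F)}{\bvec{v}_T-\bvec{v}_F}^2\big)^{1/2}$. Here is where the cut-off must be produced: the control of $h_T^{-1}\sum_F\norm{\bvec{L}^2(F)}{\bvec{v}_T-\bvec{v}_F}^2$ by $\norm{\stokes,T}{\uvec{v}_T}^2$ only holds when $\Cf{T}<1$ (Lemma~\ref{lem:hho.stabilisation}, \eqref{eq:seminorm.1.T:lower.bound}), so one either splits according to $\Cf{T}<1$ or $\ge1$, or—cleaner—one keeps the factor $\min(1,\Cf{T}^{-1})$ by noting that when $\Cf{T}\ge1$ one should instead only use the face terms that \emph{are} present in $(\cdot,\cdot)_{\bvec{U},T}$, i.e.\ bound the face contributions via the interior stabilisation or via $\GT$. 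Writing $\mu_T h_T^{2(r+1)}\seminorm{}{\bvec{w}}^2 = \mu_T\min(1,\Cf{T}^{-1})\cdot\max(1,\Cf{T})\cdot h_T^{2(r+1)}\seminorm{}{\bvec{w}}^2$ and absorbing $\max(1,\Cf{T})$ by switching from face-difference norms to $\norm{\bvec{U},T}{\cdot}$-type norms (which carry no $\Cf{T}$) in the Darcy-dominated case is the mechanism that yields the stated right-hand side. For (ii), Cauchy--Schwarz on $(\cdot,\cdot)_{\bvec{U},T}$, the boundedness \eqref{eq:boundedness:IT:norm.U} of $\IT$, and the approximation properties of $\PST\IT$ (an $\bvec{H}^{r+2}\to\vPoly{k+1}$ projector, by the usual argument) bound $\frac{\min(1,\Cf{T}^{-1})}{h_T^2}\norm{\bvec{U},T}{\IT(\bvec{w}-\PST\IT\bvec{w})}^2$ by $\min(1,\Cf{T}^{-1})h_T^{2(r+1)}\seminorm{\bvec{H}^{r+2}(T;\Real^d)}{\bvec{w}}^2$, and the remaining factor $\frac{\min(1,\Cf{T}^{-1})^{1/2}}{h_T}\norm{\bvec{U},T}{\uvec{v}_T-\IT\PST\uvec{v}_T}$ is exactly (part of) $\norm{\stokes,T}{\uvec{v}_T}$.

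Finally, I would collect the two bounds, use $\norm{\stokes,T}{\uvec{v}_T}\le\mu_T^{-1/2}\norm{\mu,h}{\uvec{v}_h}\le\mu_T^{-1/2}\norm{\mu,\nu,h}{\uvec{v}_h}$ element-by-element, apply discrete Cauchy--Schwarz over $T\in\Th$, and divide by $\norm{\mu,\nu,h}{\uvec{v}_h}$ to pass to the dual norm, obtaining \eqref{eq:estimate:Errsto}. \textbf{The main obstacle} is the careful handling of the face-difference terms so that the factor $\min(1,\Cf{T}^{-1})$—rather than just $1$—appears in the bound: this requires, in the $\Cf{T}\ge1$ regime, to avoid the lower bound \eqref{eq:seminorm.1.T:lower.bound} (which fails there) and instead control the relevant face quantities either through $\GT\uvec{v}_T$ alone or through the interior part of the stabilisation, at the cost of a clean re-derivation of the consistency identity that isolates precisely which face terms are genuinely needed.
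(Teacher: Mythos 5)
Your proposal is correct and follows essentially the same route as the paper: reformulate $\Errsto$ as a face consistency defect plus the stabilisation contribution, bound the $\bvec{w}$-factors by projector approximation properties, and—in the Darcy-dominated regime where \eqref{eq:seminorm.1.T:lower.bound} fails—extract the factor $\min(1,\Cf{T}^{-1})$ by rewriting $\mu_T/h_T=\nu_Th_T\Cf{T}^{-1}$ and controlling the face differences through the Darcy norm (the paper does this by inserting $\pm\PDT\uvec{v}_T$ and using the $(\cdot,\cdot)_{\bvec{U},T}$ part of $\norm{\darcy,T}{{\cdot}}$, treating boundary faces separately via $\bvec{v}_F=\bvec{0}$ and a discrete trace inequality since they are absent from $(\cdot,\cdot)_{\bvec{U},T}$ when $\Cf{T}\ge1$). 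The only detail left implicit in your sketch is this boundary-face split, but it fits squarely within the mechanism you describe.
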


\begin{proof}
  Let $\uvec{v}_h\in\UhD\setminus\{\uvec{0}\}$.
  Proceeding as in \cite[Point~(ii) in Lemma~2.18]{Di-Pietro.Droniou:20} using an integration by parts for the first term in the definition of $\Errsto$ along with the definitions \eqref{eq:a.mu.h} of $a_{\mu,h}$ and \eqref{eq:GT} of $\GT$ for the second term, we get the following reformulation of the error:
  \[
  \begin{aligned}
    \Errsto(\bvec{w};\uvec{v}_h)
    &= \sum_{T\in\Th}\sum_{F\in\FT}\omega_{TF}\int_F\mu_T(\GRAD\bvec{w} - \GT\IT\bvec{w})\normal_F\cdot(\bvec{v}_F - \bvec{v}_T)
    \\
    &\quad
    - \sum_{T\in\Th}\frac{\mu_T\min(1,\Cf{T}^{-1})}{h_T^2}
    (\IT(\bvec{w} - \PST\IT\bvec{w}), \uvec{v}_T - \IT\PST\uvec{v}_T)_{\bvec{U},T}.
  \end{aligned}
  \]
  Using Cauchy--Schwarz and H\"older inequalities along with $\norm{\bvec{L}^\infty(F;\Real^d)}{\normal_F}\le 1$ for all $F\in\Fh$, we can write
  \begin{equation}\label{eq:consistency.stokes:basic}
    \Errsto(\bvec{w};\uvec{v}_h)\lesssim\sum_{T\in\Th}\left[
      \term_1(T) + \term_2(T)
      \right]
  \end{equation}
  with
  \[
  \begin{aligned}
    \term_1(T)&\coloneq
    \mu_T^{\nicefrac12} h_T^{\nicefrac12}\norm{\btens{L}^2(\partial T;\Real^{d\times d})}{\GRAD\bvec{w} - \GT\IT\bvec{w}}
    ~\left(
    \frac{\mu_T}{h_T}\sum_{F\in\FT}\norm{\bvec{L}^2(F;\Real^d)}{\bvec{v}_F - \bvec{v}_T}^2
    \right)^{\nicefrac12},
    \\
    \term_2(T)&\coloneq
    \frac{\mu_T\min(1,\Cf{T}^{-1})}{h_T^2}
    \norm{\bvec{U},T}{\IT(\bvec{w} - \PST\IT\bvec{w})}
    \norm{\bvec{U},T}{\uvec{v}_T - \IT\PST\uvec{v}_T}.
  \end{aligned}
  \]
  
  Let us estimate $\term_1(T)$.
  Recalling that $\GT\IT\bvec{w} = \tlproj{k}{T}\GRAD\bvec{w}$ and using the approximation properties of the $L^2$-orthogonal projector (cf.~\cite{Di-Pietro.Droniou:17} and \cite[Chapter~1]{Di-Pietro.Droniou:20} concerning the extension to non-star-shaped elements), it is readily inferred for the first factor
  \begin{equation}\label{eq:est.err.grad}
    \mu_T^{\nicefrac12} h_T^{\nicefrac12}\norm{\btens{L}^2(\partial T;\Real^{d\times d})}{\GRAD\bvec{w} - \GT\IT\bvec{w}}
    \lesssim
    \mu_T^{\nicefrac12}h_T^{r+1}\seminorm{\bvec{H}^{r+2}(T;\Real^d)}{\bvec{w}}.
  \end{equation}
  The estimate of the second factor depends on the regime.
  If $\Cf{T}<1$, using \eqref{eq:seminorm.1.T:lower.bound} we write
  \begin{equation}\label{eq:est.seminorm.1.pT:CfT<1}
    \frac{\mu_T}{h_T}\sum_{F\in\FT}\norm{\bvec{L}^2(F;\Real^d)}{\bvec{v}_F - \bvec{v}_T}^2
    \lesssim\mu_T\norm{\stokes,T}{\uvec{v}_T}^2
    = \mu_T\min(1,\Cf{T}^{-1})\norm{\stokes,T}{\uvec{v}_T}^2,
  \end{equation}
  where the conclusion follows observing that $1 = \min(1,\Cf{T}^{-1})$.
  If, on the other hand, $\Cf{T}\ge 1$ (which implies, in particular, $\nu_T>0$), we
  split the sum separating the contributions from internal and boundary faces:
  \begin{equation}\label{eq:bnd.term:split}
    \frac{\mu_T}{h_T}\sum_{F\in\FT}\norm{\bvec{L}^2(F;\Real^d)}{\bvec{v}_F - \bvec{v}_T}^2
    = \frac{\mu_T}{h_T}\sum_{F\in\FT\setminus\Fhb}\norm{\bvec{L}^2(F;\Real^d)}{\bvec{v}_F - \bvec{v}_T}^2
    + \frac{\mu_T}{h_T}\sum_{F\in\FT\cap\Fhb}\norm{\bvec{L}^2(F;\Real^d)}{\bvec{v}_T}^2,
  \end{equation}
  where we have additionally accounted for the fact that $\bvec{v}_F = \bvec{0}$ whenever $F\in\FT\cap\Fhb$ since $\uvec{v}_h\in\UhD$.
  For $F\in\FT\setminus\Fhb$, we insert $\pm\PDT\uvec{v}_T$ into the norm and use triangle and discrete trace inequalities to write
  \begin{equation}\label{eq:bnd.term:est.internal}
    \begin{aligned}
      \frac{\mu_T}{h_T}\norm{\bvec{L}^2(F;\Real^d)}{\bvec{v}_F - \bvec{v}_T}^2
      &\lesssim
      \nu_T\Cf{T}^{-1}\left(
      h_T\norm{\bvec{L}^2(F;\Real^d)}{\bvec{v}_F - \PDT\uvec{v}_T}^2
      + \norm{\bvec{L}^2(T;\Real^d)}{\bvec{v}_T - \PDT\uvec{v}_T}^2
      \right)
      \\
      &\lesssim
      \nu_T\Cf{T}^{-1}
      \norm{\bvec{U},T}{\uvec{v}_T - \IT\PDT\uvec{v}_T}^2\\
      &\lesssim
      \nu_T\Cf{T}^{-1}
      \norm{\darcy,T}{\uvec{v}_T}^2,
      \end{aligned}
    \end{equation}
    where we have additionally used the definition \eqref{eq:CfT} of $\Cf{T}$ in the first inequality,
    invoked the definition of $\norm{\bvec{U},T}{{\cdot}}$ to pass to the second inequality (see \eqref{eq:prod.U}--\eqref{eq:norm.U}, and notice that the term corresponding to $F$ appears in this norm since $F\not\in\Fhb$),
      and concluded using the definition \eqref{eq:norm.nu.darcy} of the $\norm{\darcy,T}{{\cdot}}$-norm together with $1 = \min(1,\Cf{T})$.
      For $F\in\FT\cap\Fhb$, on the other hand, using a discrete trace inequality to write $\norm{\bvec{L}^2(F;\Real^d)}{\bvec{v}_T}\lesssim h_T^{-\nicefrac12}\norm{\bvec{L}^2(T;\Real^d)}{\bvec{v}_T}$,
      inserting $\pm\PDT\uvec{v}_T$ into the norm in the right-hand side, and concluding with a triangle inequality along with the definition of $\norm{\bvec{U},T}{{\cdot}}$, we get
    \begin{equation}\label{eq:bnd.term:est.bnd}
      \frac{\mu_T}{h_T}\norm{\bvec{L}^2(F;\Real^d)}{\bvec{v}_T}^2
      \lesssim\nu_T\Cf{T}^{-1}\left(
      \norm{\bvec{L}^2(T;\Real^d)}{\PDT\uvec{v}_T}^2+\norm{\bvec{U},T}{\uvec{v}_T - \IT\PDT\uvec{v}_T}^2
      \right)
      \lesssim\nu_T\Cf{T}^{-1}\norm{\darcy,T}{\uvec{v}_T}^2,
    \end{equation}
    where the last passage follows recalling the definitions \eqref{eq:norm.nu.darcy} of the $\norm{\darcy,T}{{\cdot}}$-norm, \eqref{eq:tPDT} of $\tPDT$ (which is equal to $\PDT$ since $\Cf{T}\ge 1$), and observing again that $1 = \min(1,\Cf{T})$.
    Hence, plugging \eqref{eq:bnd.term:est.internal} and \eqref{eq:bnd.term:est.bnd} into \eqref{eq:bnd.term:split}, using $\card(\FT)\lesssim 1$, and observing that $\Cf{T}^{-1} = \min(1,\Cf{T}^{-1})$, we can go on writing
  \begin{equation}\label{eq:est.seminorm.1.pT:CfT>=1}
      \frac{\mu_T}{h_T}\sum_{F\in\FT}\norm{\bvec{L}^2(F;\Real^d)}{\bvec{v}_F - \bvec{v}_T}^2
      \lesssim\nu_T\min(1,\Cf{T}^{-1})\norm{\darcy,T}{\uvec{v}_T}^2.
    \end{equation}
  Gathering \eqref{eq:est.err.grad}, \eqref{eq:est.seminorm.1.pT:CfT<1}, and \eqref{eq:est.seminorm.1.pT:CfT>=1}, we arrive at
  \begin{equation}\label{eq:consistency.stokes:T1}
    \term_1(T)
    \lesssim\mu_T^{\nicefrac12}\min(1,\Cf{T}^{-1})^{\nicefrac12}h_T^{r+1}\seminorm{\bvec{H}^{r+2}(T;\Real^d)}{\bvec{w}}\left(
    \mu_T\norm{\stokes,T}{\uvec{v}_T}^2
    + \nu_T\norm{\darcy,T}{\uvec{v}_T}^2
    \right)^{\nicefrac12}.
  \end{equation}

  Moving to $\term_2(T)$, using the $\norm{\bvec{U},T}{{\cdot}}$-boundedness \eqref{eq:boundedness:IT:norm.U} of $\IT$ followed by the approximation properties of $\PST\IT$ (consequence, for each of its components, of \cite[Eq.~(2.14) and Theorem~1.48]{Di-Pietro.Droniou:20}), we have
  \[
  \norm{\bvec{U},T}{\IT(\bvec{w} - \PST\IT\uvec{w})}  
  \lesssim\norm{\bvec{L}^2(T;\Real^d)}{\bvec{w} - \PST\IT\uvec{w}}
  + h_T\seminorm{\bvec{H}^1(T;\Real^d)}{\bvec{w} - \PST\IT\uvec{w}}
  \lesssim h_T^{r+2}\seminorm{\bvec{H}^{r+2}(T;\Real^d)}{\bvec{w}}.
  \]
  Plugging this estimate into the definition of $\term_2(T)$ and recalling the definition \eqref{eq:norm.mu.stokes} of $\norm{\stokes,T}{{\cdot}}$, we get
  \begin{equation}\label{eq:consistency.stokes:T2}
    \term_2(T)
    \lesssim \mu_T^{\nicefrac12}\min(1,\Cf{T}^{-1})^{\nicefrac12}h_T^{r+1}\seminorm{\bvec{H}^{r+2}(T;\Real^d)}{\bvec{w}}~
    \mu_T^{\nicefrac12}\norm{\stokes,T}{\uvec{v}_T}.
  \end{equation}

  Using \eqref{eq:consistency.stokes:T1} and \eqref{eq:consistency.stokes:T2} to estimate the right-hand side of \eqref{eq:consistency.stokes:basic}, we obtain
  \[
  \begin{aligned}
    \Errsto(\bvec{w};\uvec{v}_h)
    &\lesssim\sum_{T\in\Th}\mu_T^{\nicefrac12}\min(1,\Cf{T}^{-1})^{\nicefrac12}h_T^{r+1}\seminorm{\bvec{H}^{r+2}(T;\Real^d)}{\bvec{w}}\left(
    \mu_T\norm{\stokes,T}{\uvec{v}_T}^2
    + \nu_T\norm{\darcy,T}{\uvec{v}_T}^2
    \right)^{\nicefrac12}
    \\
    &\le
    \left(
    \sum_{T\in\Th}\mu_T\min(1,\Cf{T}^{-1})h_T^{2(r+1)}\seminorm{\bvec{H}^{r+2}(T;\Real^d)}{\bvec{w}}^2
    \right)^{\nicefrac12}\norm{\mu,\nu,h}{\uvec{v}_h},
  \end{aligned}
  \]
  where the conclusion follows using a discrete Cauchy--Schwarz inequality on the sum over $T\in\Th$ along with the definition \eqref{eq:norm.mu.nu.h} of $\norm{\mu,\nu,h}{{\cdot}}$.
  Dividing by $\norm{\mu,\nu,h}{\uvec{v}_h}$ and passing to the supremum concludes the proof of \eqref{eq:estimate:Errsto}.
\end{proof}

\subsubsection{Consistency of the Darcy bilinear form}

\begin{lemma}[Consistency of the Darcy bilinear form]
  Given $\bvec{w}\in\bvec{H}^1(\Omega;\Real^d)$, let the Darcy consistency error linear form $\Errdar(\bvec{w};\cdot):\Uh\to\Real$ be such that, for all $\uvec{v}_h\in\Uh$,
  \begin{equation}\label{eq:Errdar}
    \Errdar(\bvec{w};\uvec{v}_h)
    \coloneq
    \sum_{T\in\Th}\int_T\nu_T\bvec{w}\cdot\tPDT\uvec{v}_T
    - a_{\nu,h}(\Ih\bvec{w},\uvec{v}_h).
  \end{equation}
  Then, further assuming $\bvec{w}\in\bvec{H}^{r+1}(\Th;\Real^d)$ for some $r\in\{0,\ldots,k\}$, it holds
  \begin{equation}\label{eq:estimate:Errdar}
    \norm{\mu,\nu,h,*}{\Errdar(\bvec{w};\cdot)}
    \lesssim\left(
    \sum_{T\in\Th}\nu_T\min(1,\Cf{T}) h_T^{2(r+1)}\seminorm{\bvec{H}^{r+1}(T;\Real^d)}{\bvec{w}}^2
    \right)^{\nicefrac12}.
  \end{equation}
\end{lemma}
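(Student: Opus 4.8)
The plan is to follow the template of the Stokes consistency proof above, splitting the local contributions of $\Errdar$ into a \emph{consistency} part (the $L^2$-products of $\tPDT$) and a \emph{stabilisation} part, and treating each separately and regime by regime. Fix $\uvec{v}_h\in\UhD\setminus\{\uvec{0}\}$. Expanding the definitions \eqref{eq:Errdar}, \eqref{eq:a.nu} and \eqref{eq:a.darcy.T}, and using the linearity of $\IT$ to write $\IT\bvec{w} - \IT\PDT\IT\bvec{w} = \IT(\bvec{w} - \PDT\IT\bvec{w})$, I would first recast the error as $\Errdar(\bvec{w};\uvec{v}_h) = \sum_{T\in\Th}\nu_T(\term_1(T) - \term_2(T))$, with $\term_1(T)\coloneq\int_T(\bvec{w} - \tPDT\IT\bvec{w})\cdot\tPDT\uvec{v}_T$ and $\term_2(T)\coloneq\min(1,\Cf{T})(\IT(\bvec{w} - \PDT\IT\bvec{w}),\uvec{v}_T - \IT\PDT\uvec{v}_T)_{\bvec{U},T}$. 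The goal is to show that, for every $T\in\Th$,
\[
\nu_T\big(|\term_1(T)| + |\term_2(T)|\big)\lesssim\nu_T^{\nicefrac12}\min(1,\Cf{T})^{\nicefrac12}h_T^{r+1}\seminorm{\bvec{H}^{r+1}(T;\Real^d)}{\bvec{w}}\cdot\nu_T^{\nicefrac12}\norm{\darcy,T}{\uvec{v}_T},
\]
after which a discrete Cauchy--Schwarz inequality over $T\in\Th$, the identity $\sum_{T\in\Th}\nu_T\norm{\darcy,T}{\uvec{v}_T}^2 = \norm{\nu,h}{\uvec{v}_h}^2\le\norm{\mu,\nu,h}{\uvec{v}_h}^2$ (cf.~\eqref{eq:a.nu} and \eqref{eq:norm.nu.darcy}--\eqref{eq:norm.mu.nu.h}), division by $\norm{\mu,\nu,h}{\uvec{v}_h}$, and passage to the supremum give \eqref{eq:estimate:Errdar}.

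For $\term_1(T)$ I would distinguish the two regimes. If $\Cf{T}<1$, then by \eqref{eq:tPDT} (and the definition \eqref{eq:def.Ih} of $\IT$) one has $\tPDT\IT\bvec{w} = \vlproj{k}{T}\bvec{w}$ and $\tPDT\uvec{v}_T = \bvec{v}_T\in\vPoly{k}(T;\Real^d)$, whence $\term_1(T) = \int_T(\bvec{w} - \vlproj{k}{T}\bvec{w})\cdot\bvec{v}_T = 0$ by definition of the $L^2$-orthogonal projector; only the stabilisation part then remains. If instead $\Cf{T}\ge 1$, then $\tPDT\IT\bvec{w} = \PDT\IT\bvec{w}$ and $\tPDT\uvec{v}_T = \PDT\uvec{v}_T$, and a Cauchy--Schwarz inequality together with $\norm{\bvec{L}^2(T;\Real^d)}{\PDT\uvec{v}_T}\le\norm{\darcy,T}{\uvec{v}_T}$ (immediate from \eqref{eq:a.darcy.T}--\eqref{eq:norm.nu.darcy} and $\tPDT = \PDT$ in this regime) and the approximation estimate \eqref{eq:approximation:PDT} with $m = 0$ (i.e.\ $\norm{\bvec{L}^2(T;\Real^d)}{\bvec{w} - \PDT\IT\bvec{w}}\lesssim h_T^{r+1}\seminorm{\bvec{H}^{r+1}(T;\Real^d)}{\bvec{w}}$) yield $\nu_T|\term_1(T)|\lesssim\nu_T h_T^{r+1}\seminorm{\bvec{H}^{r+1}(T;\Real^d)}{\bvec{w}}\,\norm{\darcy,T}{\uvec{v}_T}$, which is of the announced form since $\min(1,\Cf{T}) = 1$ here.

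For $\term_2(T)$: if $\nu_T = 0$ (hence $\Cf{T} = 0$) then $\nu_T\term_2(T) = 0$, so one may assume $\min(1,\Cf{T})>0$. A Cauchy--Schwarz inequality for the product $(\cdot,\cdot)_{\bvec{U},T}$, together with $\norm{\bvec{U},T}{\uvec{v}_T - \IT\PDT\uvec{v}_T}\le\min(1,\Cf{T})^{-\nicefrac12}\norm{\darcy,T}{\uvec{v}_T}$ (again from \eqref{eq:a.darcy.T}--\eqref{eq:norm.nu.darcy}) and the bound $\norm{\bvec{U},T}{\IT(\bvec{w} - \PDT\IT\bvec{w})}\lesssim\norm{\bvec{L}^2(T;\Real^d)}{\bvec{w} - \PDT\IT\bvec{w}} + h_T\seminorm{\bvec{H}^1(T;\Real^d)}{\bvec{w} - \PDT\IT\bvec{w}}\lesssim h_T^{r+1}\seminorm{\bvec{H}^{r+1}(T;\Real^d)}{\bvec{w}}$ obtained by combining the interpolator boundedness \eqref{eq:boundedness:IT:norm.U} with \eqref{eq:approximation:PDT} for $m = 0$ and $m = 1$, gives $\nu_T|\term_2(T)|\lesssim\nu_T\min(1,\Cf{T})^{\nicefrac12}h_T^{r+1}\seminorm{\bvec{H}^{r+1}(T;\Real^d)}{\bvec{w}}\,\norm{\darcy,T}{\uvec{v}_T}$, again of the announced form. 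Summing the bounds on $\nu_T(|\term_1(T)|+|\term_2(T)|)$ over $T\in\Th$ and proceeding as indicated in the first paragraph completes the proof.

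I expect the only genuinely substantive point to be the observation that $\term_1(T)$ vanishes identically in the Stokes-dominated regime (because there $\tPDT\uvec{v}_T = \bvec{v}_T$ is a polynomial of degree $\le k$, hence orthogonal to $\bvec{w} - \vlproj{k}{T}\bvec{w}$): this is precisely what makes the estimate carry the sharp cutoff weight $\nu_T\min(1,\Cf{T})$ rather than merely $\nu_T$, mirroring the role of the corresponding cancellation in the Stokes consistency analysis. Everything else is routine bookkeeping of the factors $\min(1,\Cf{T})^{\pm\nicefrac12}$, the only trap being to isolate the $\nu_T = 0$ corner before invoking $\min(1,\Cf{T})^{-\nicefrac12}$.
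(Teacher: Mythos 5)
Your proposal is correct and follows essentially the same route as the paper: the same splitting into the consistent term and the stabilisation term, the same observation that the consistent term vanishes by $L^2$-orthogonality when $\Cf{T}<1$, and the same use of the approximation properties of $\PDT\IT$ (with $m=0$ and $m=1$) together with the $\norm{\bvec{U},T}{{\cdot}}$-boundedness of $\IT$ for the remaining contributions. The only cosmetic difference is that you isolate the $\nu_T=0$ case before writing $\min(1,\Cf{T})^{-\nicefrac12}$, whereas the paper sidesteps the inverse by distributing $\min(1,\Cf{T})=\min(1,\Cf{T})^{\nicefrac12}\min(1,\Cf{T})^{\nicefrac12}$ across the two factors of the Cauchy--Schwarz inequality; both are fine.
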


\begin{proof}
  Let $\uvec{v}_h\in\UhD\setminus\{\uvec{0}\}$.
  Expanding $a_{\nu,h}$ according to its definition \eqref{eq:a.nu}, we get
  \begin{equation}\label{eq:estimate:Errdar:basic}
    \Errdar(\bvec{w};\uvec{v}_h)
    = \sum_{T\in\Th}\left[
    \term_1(T) + \term_2(T)
    \right],
  \end{equation}
  with
  \[
  \begin{aligned}
    \term_1(T) &\coloneq
    \int_T\nu_T(\bvec{w} - \tPDT\IT\bvec{w})\cdot\tPDT\uvec{v}_T,
    \\
    \term_2(T) &\coloneq
    -\nu_T\min(1,\Cf{T})(\IT(\bvec{w} - \PDT\IT\bvec{w}), \uvec{v}_T - \IT\PDT\uvec{v}_T)_{\bvec{U},T}.
  \end{aligned}
  \]

  The estimate of $\term_1(T)$ depends on the regime.
  In the case $\Cf{T}<1$, the definitions \eqref{eq:tPDT} of $\tPDT$ and \eqref{eq:def.Ih} of the interpolator yield
  \begin{equation}\label{eq:est.T1.Cf<1}
    \term_1(T)=\int_T\nu_T(\bvec{w} - \vlproj{k}{T}\bvec{w})\cdot\bvec{v}_T=0
  \end{equation}
  since $\nu_T\bvec{v}_T\in\vPoly{k}(T;\Real^d)$.
  Let us now consider the case $\Cf{T}\ge 1$.
  Using again the definition \eqref{eq:tPDT} to replace $\tPDT$ with $\PDT$ and applying a Cauchy--Schwarz inequality, we get
  \begin{equation}\label{eq:est.T1.Cf>1}
    \begin{aligned}
      |\term_1(T)|
      &\lesssim\nu_T\norm{\bvec{L}^2(T;\Real^d)}{\bvec{w} - \PDT\IT\bvec{w}}
      \norm{\bvec{L}^2(T;\Real^d)}{\PDT\uvec{v}_T}
      \\
      &\lesssim\nu_T^{\nicefrac12}\min(1,\Cf{T})^{\nicefrac12}h_T^{r+1}\seminorm{\bvec{H}^{r+1}(T;\Real^d)}{\bvec{w}}~\nu_T^{\nicefrac12}\norm{\darcy,T}{\uvec{v}_T},
    \end{aligned}
  \end{equation}
  where, to pass to the second line, we have used the approximation properties \eqref{eq:approximation:PDT} of $\PDT\IT$ with $m=0$,
  the definition \eqref{eq:norm.nu.darcy} of the $\norm{\darcy,T}{{\cdot}}$-norm,
  and observed that $1 = \min(1,\Cf{T})$.  
  Gathering \eqref{eq:est.T1.Cf<1} and \eqref{eq:est.T1.Cf>1}, we thus have, for any value of $\Cf{T}$,
  \begin{equation}\label{eq:estimate:Errdar:T1}
    |\term_1(T)|
    \lesssim
    \nu_T^{\nicefrac12}\min(1,\Cf{T})^{\nicefrac12}h_T^{r+1}\seminorm{\bvec{H}^{r+1}(T;\Real^d)}{\bvec{w}}
    \nu_T^{\nicefrac12}\norm{\darcy,T}{\uvec{v}_T}.
  \end{equation}
  
  To estimate $\term_2(T)$, we use a Cauchy--Schwarz inequality to write
  \begin{align}
      &|\term_2(T)|
      \nonumber\\
      &\quad
      \le\nu_T^{\nicefrac12}\min(1,\Cf{T})^{\nicefrac12}\norm{\bvec{U},T}{\IT(\bvec{w} - \PDT\IT\bvec{w})}
      ~\nu_T^{\nicefrac12}\min(1,\Cf{T})^{\nicefrac12}\norm{\bvec{U},T}{\uvec{v}_T - \IT\PDT\uvec{v}_T}
      \nonumber\\
      &\quad
      \lesssim
      \nu_T^{\nicefrac12}\min(1,\Cf{T})^{\nicefrac12}\left(
      \norm{\bvec{L}^2(T;\Real^d)}{\bvec{w} - \PDT\IT\bvec{w}}
      + h_T\seminorm{\bvec{H}^1(T;\Real^d)}{\bvec{w} - \PDT\IT\bvec{w}}
      \right)
      ~\nu_T^{\nicefrac12}\norm{\darcy,T}{\uvec{v}_T}
      \nonumber\\
      &\quad
      \lesssim
      \nu_T^{\nicefrac12}\min(1,\Cf{T})^{\nicefrac12}h_T^{r+1}\seminorm{\bvec{H}^{r+1}(T;\Real^d)}{\bvec{w}}
      ~\nu_T^{\nicefrac12}\norm{\darcy,T}{\uvec{v}_T},
    \label{eq:estimate:Errdar:T2}
  \end{align}
  where we have used the $\norm{\bvec{U},T}{{\cdot}}$-boundedness \eqref{eq:boundedness:IT:norm.U} of $\IT$ along with the definition \eqref{eq:norm.nu.darcy} of the $\norm{\darcy,T}{{\cdot}}$-norm in the second inequality and
  the approximation properties \eqref{eq:approximation:PDT} of $\PDT\IT$ with $m = 0$ and $m = 1$ to conclude.
  Plugging \eqref{eq:estimate:Errdar:T1} and \eqref{eq:estimate:Errdar:T2} into \eqref{eq:estimate:Errdar:basic}, using discrete Cauchy--Schwarz inequalities, dividing by $\norm{\mu,\nu,h}{\uvec{v}_h}$, and passing to the supremum, the conclusion follows.
\end{proof}

\subsubsection{Consistency of the coupling bilinear form}

The quantity estimated in the following lemma can be interpreted as an adjoint consistency error for the discrete divergence.

\begin{lemma}[Consistency of the coupling bilinear form]
  Given $q\in H^1(\Omega)$, let the coupling consistency error linear form $\Errc(q;\cdot):\UhD\to\Real$ be such that, for all $\uvec{v}_h\in\UhD$,
  \begin{equation}\label{eq:Errc}
    \Errc(q;\uvec{v}_h)
    \coloneq
    \sum_{T\in\Th}\int_T\GRAD q\cdot\tPDT\uvec{v}_T - b_h(\uvec{v}_h,\lproj{k}{h}q).
  \end{equation}
  Then, further assuming, for some $r\in\{0,\ldots,k\}$, $q\in H^{r+1+\tv{\Cf{T}\ge 1}}(T)$ for all $T\in\Th$, it holds
  \begin{equation}\label{eq:estimate:Errc}
    \norm{\mu,\nu,h,*}{\Errc(q;\cdot)}
    \\
    \lesssim\left[
      \sum_{T\in\Th}\left(
      \mu_T^{-1}\tv{\Cf{T}<1} h_T^{2(r+1)}\seminorm{H^{r+1}(T)}{q}^2
      + \nu_T^{-1}\tv{\Cf{T}\ge 1} h_T^{2(r+1)}\seminorm{H^{r+2}(T)}{q}^2
      \right)
      \right]^{\nicefrac12},
  \end{equation}
  where $\nu_T^{-1}\tv{\Cf{T}\ge 1}\coloneq 0$ if $\nu_T = 0$, as in Theorem~\ref{thm:error.estimate}.
\end{lemma}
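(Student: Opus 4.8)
The plan is to recast $\Errc(q;\cdot)$ as a purely "stabilisation-type" remainder by repeated integration by parts, and then bound it elementwise. Recalling the definition \eqref{eq:bh} of $b_h$, we have $\Errc(q;\uvec{v}_h) = \sum_{T\in\Th}\big(\int_T\GRAD q\cdot\tPDT\uvec{v}_T + \int_T\DT\uvec{v}_T\,\lproj{k}{T}q\big)$, and we treat each element according to its regime. If $\Cf{T}<1$, so that $\tPDT\uvec{v}_T=\bvec{v}_T$, taking $\btens{\tau}=(\lproj{k}{T}q)\Id\in\tPoly{k}(T;\Real^{d\times d})$ in \eqref{eq:GT} (and using $\DT=\tr\GT$ from \eqref{eq:DT}) gives $\int_T\DT\uvec{v}_T\,\lproj{k}{T}q = -\int_T\bvec{v}_T\cdot\GRAD\lproj{k}{T}q + \sum_{F\in\FT}\omega_{TF}\int_F(\bvec{v}_F\cdot\normal_F)\lproj{k}{T}q$; integrating by parts $\int_T\bvec{v}_T\cdot\GRAD(q-\lproj{k}{T}q)$, killing the volume term since $\DIV\bvec{v}_T\in\Poly{k}(T)$, and then writing $\lproj{k}{T}q = q-(q-\lproj{k}{T}q)$ in the face contribution, yields $\Errc(q;\uvec{v}_h)|_T = \sum_{F\in\FT}\omega_{TF}\int_F\big((\bvec{v}_T-\bvec{v}_F)\cdot\normal_F\big)(q-\lproj{k}{T}q) + \sum_{F\in\FT}\omega_{TF}\int_F(\bvec{v}_F\cdot\normal_F)q$. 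If instead $\Cf{T}\ge1$, so $\tPDT\uvec{v}_T=\PDT\uvec{v}_T$, we write $\int_T\DT\uvec{v}_T\,\lproj{k}{T}q = \int_T\DT\uvec{v}_T\,\lproj{k+1}{T}q$ (since $\DT\uvec{v}_T\in\Poly{k}(T)$), apply \eqref{eq:PDT} with $(q,\bvec{w})=(\lproj{k+1}{T}q,\bvec{0})$, integrate by parts $\int_T\PDT\uvec{v}_T\cdot\GRAD(q-\lproj{k+1}{T}q)$ (the volume term vanishing since $\DIV\PDT\uvec{v}_T\in\Poly{k+1}(T)$), and again split $\lproj{k+1}{T}q$ in the face term, getting the analogous identity with $\PDT\uvec{v}_T$ and $\lproj{k+1}{T}q$ in place of $\bvec{v}_T$ and $\lproj{k}{T}q$. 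Summing over $T\in\Th$, the terms $\sum_{T\in\Th}\sum_{F\in\FT}\omega_{TF}\int_F(\bvec{v}_F\cdot\normal_F)q$ cancel: interior faces contribute zero by the opposite orientations of the two adjacent elements together with the single-valuedness of the $H^1(\Omega)$-function $q$, and boundary faces contribute zero because $\bvec{v}_F=\bvec{0}$ for $\uvec{v}_h\in\UhD$. This leaves the compact reformulation $\Errc(q;\uvec{v}_h) = \sum_{T\in\Th}\sum_{F\in\FT}\omega_{TF}\int_F\big((\tPDT\uvec{v}_T-\bvec{v}_F)\cdot\normal_F\big)\big(q-\lproj{k+\tv{\Cf{T}\ge 1}}{T}q\big)$.

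It then remains to estimate this face by face, separating regimes. For $\Cf{T}<1$, a Cauchy--Schwarz inequality on $\FT$ combined with the lower bound \eqref{eq:seminorm.1.T:lower.bound} (valid since $\Cf{T}<1$) gives $\big(\sum_{F\in\FT}\norm{\bvec{L}^2(F;\Real^d)}{\bvec{v}_T-\bvec{v}_F}^2\big)^{\nicefrac12}\lesssim h_T^{\nicefrac12}\norm{\stokes,T}{\uvec{v}_T}$, while the approximation properties of the $L^2$-orthogonal projector on traces (cf.\ \cite[Chapter~1]{Di-Pietro.Droniou:20}) give $\norm{L^2(F)}{q-\lproj{k}{T}q}\lesssim h_T^{r+\nicefrac12}\seminorm{H^{r+1}(T)}{q}$; since $\card(\FT)\lesssim1$, this produces $|\Errc(q;\uvec{v}_h)|_T|\lesssim \big(\mu_T^{-\nicefrac12}h_T^{r+1}\seminorm{H^{r+1}(T)}{q}\big)\big(\mu_T^{\nicefrac12}\norm{\stokes,T}{\uvec{v}_T}\big)$. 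For $\Cf{T}\ge1$ (so $\nu_T>0$, $\min(1,\Cf{T})=1$, and $\tPDT\uvec{v}_T=\PDT\uvec{v}_T$), the key observation is that $\PDT\uvec{v}_T\in\vPoly{k}(T;\Real^d)$, hence its trace on any $F\in\FT$ equals $\vlproj{k}{F}\PDT\uvec{v}_T$: for interior faces, $\norm{\bvec{L}^2(F;\Real^d)}{\PDT\uvec{v}_T-\bvec{v}_F}=\norm{\bvec{L}^2(F;\Real^d)}{\vlproj{k}{F}\PDT\uvec{v}_T-\bvec{v}_F}$ is bounded by $h_T^{-\nicefrac12}$ times the interior-face part of the stabilisation term in $a_{\darcy,T}$ (interior faces being present in $(\cdot,\cdot)_{\bvec{U},T}$ precisely because $\Cf{T}\ge1$, see \eqref{eq:prod.U}), whereas for boundary faces $\bvec{v}_F=\bvec{0}$ and a discrete trace inequality together with $\norm{\bvec{L}^2(T;\Real^d)}{\PDT\uvec{v}_T}\le\norm{\darcy,T}{\uvec{v}_T}$ give the same bound; thus $\big(\sum_{F\in\FT}\norm{\bvec{L}^2(F;\Real^d)}{\PDT\uvec{v}_T-\bvec{v}_F}^2\big)^{\nicefrac12}\lesssim h_T^{-\nicefrac12}\norm{\darcy,T}{\uvec{v}_T}$. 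Combined with $\norm{L^2(F)}{q-\lproj{k+1}{T}q}\lesssim h_T^{r+\nicefrac32}\seminorm{H^{r+2}(T)}{q}$ (legitimate since $q\in H^{r+2}(T)$ when $\tv{\Cf{T}\ge1}=1$), this yields $|\Errc(q;\uvec{v}_h)|_T|\lesssim\big(\nu_T^{-\nicefrac12}h_T^{r+1}\seminorm{H^{r+2}(T)}{q}\big)\big(\nu_T^{\nicefrac12}\norm{\darcy,T}{\uvec{v}_T}\big)$.

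Finally, summing the two elementwise bounds over $T\in\Th$ and applying a discrete Cauchy--Schwarz inequality, the data factors assemble into exactly the bracket on the right-hand side of \eqref{eq:estimate:Errc}, while the energy factors satisfy $\sum_{T\in\Th}\mu_T\norm{\stokes,T}{\uvec{v}_T}^2 + \sum_{T\in\Th}\nu_T\norm{\darcy,T}{\uvec{v}_T}^2 = \norm{\mu,h}{\uvec{v}_h}^2+\norm{\nu,h}{\uvec{v}_h}^2 = \norm{\mu,\nu,h}{\uvec{v}_h}^2$; dividing by $\norm{\mu,\nu,h}{\uvec{v}_h}$ and passing to the supremum gives \eqref{eq:estimate:Errc}. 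I expect the reformulation of the first paragraph to be the delicate point: one must route both regimes so that they produce the \emph{same} boundary term $\sum_{T}\sum_{F}\omega_{TF}\int_F(\bvec{v}_F\cdot\normal_F)q$ that cancels globally, one must use the degree-$(k{+}1)$ projection in the Darcy regime both to legitimately invoke \eqref{eq:PDT} and to match the assumed $H^{r+2}$-regularity and the optimal rate, and the appearance of boundary faces in the Darcy-regime estimate --- which are invisible to the stabilisation bilinear form $(\cdot,\cdot)_{\bvec{U},T}$ when $\Cf{T}\ge1$ --- forces the separate argument relying on $\uvec{v}_h\in\UhD$.
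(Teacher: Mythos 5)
Your proposal is correct and follows essentially the same route as the paper: the same regime split, the same use of \eqref{eq:GT} with $\btens{\tau}=\lproj{k}{T}q\,\Id$ when $\Cf{T}<1$ and of \eqref{eq:PDT} with $(\lproj{k+1}{T}q,\bvec{0})$ when $\Cf{T}\ge 1$, the same global cancellation of $\sum_{T}\sum_{F}\omega_{TF}\int_F(\bvec{v}_F\cdot\normal_F)q$, and the same final bounds via \eqref{eq:seminorm.1.T:lower.bound} and the control of $\PDT\uvec{v}_T$ and of the face differences by $\norm{\darcy,T}{{\cdot}}$. The only (harmless) deviation is the extra integration by parts in the Darcy regime, which moves the volume residual $\int_T\GRAD(q-\lproj{k+1}{T}q)\cdot\PDT\uvec{v}_T$ onto the faces and yields your unified face-based identity, at the cost of having to handle the trace of $\PDT\uvec{v}_T$ on boundary faces separately --- which you do correctly.
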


\begin{proof}
  Let $\uvec{v}_h\in\UhD\setminus\{\uvec{0}\}$.
  We start by noticing that, expanding the bilinear form $b_h$ according to its definition \eqref{eq:bh},
  \begin{equation}\label{eq:Errc:estimate:basic}
    \Errc(q;\uvec{v}_h)
    = \sum_{T\in\Th}
    \left(
    \int_T\GRAD q\cdot\tPDT\uvec{v}_T
    + \int_T\lproj{k}{T}q~\DT\uvec{v}_T
    - \sum_{F\in\FT}\omega_{TF}\int_Fq~(\bvec{v}_F\cdot\normal_F)
    \right),
  \end{equation}
  where the insertion of the last term in parenthesis is made possible by the single-valuedness of $q$ at interfaces along with the fact that $\bvec{v}_F\cdot\normal_F = 0$ for all $F\in\Fhb$.
  Denote by $\term(T)$ the argument of the summation in \eqref{eq:Errc:estimate:basic}.
  To estimate this quantity, we distinguish two cases based on the value of $\Cf{T}$.
  
  If $\Cf{T}<1$, $\tPDT\uvec{v}_T = \bvec{v}_T$ by \eqref{eq:tPDT}, so that
  \[
  \begin{aligned}
    \term(T)
    &= \int_T\GRAD q\cdot\bvec{v}_T
    + \int_T\lproj{k}{T}q~\DT\uvec{v}_T
    - \sum_{F\in\FT}\omega_{TF}\int_Fq~(\bvec{v}_F\cdot\normal_F)
    \\
    &= -\int_T\GRAD(\lproj{k}{T}q - q)\cdot\bvec{v}_T
    + \sum_{F\in\FT}\omega_{TF}\int_F(\lproj{k}{T}q - q)~(\bvec{v}_F\cdot\normal_F)
    \\
    &= \int_T\cancel{(\lproj{k}{T}q - q)}~\DIV\bvec{v}_T
    + \sum_{F\in\FT}\omega_{TF}\int_F(\lproj{k}{T}q - q)~(\bvec{v}_F - \bvec{v}_T)\cdot\normal_F,
  \end{aligned}
  \]
  where, to obtain the second equality, recalling \eqref{eq:DT}, we have expanded $\DT\uvec{v}_T$ according to \eqref{eq:GT} with $\btens{\tau} = \lproj{k}{T}q\Id$,
  while the third equality is obtained integrating by parts the first term in the right-hand side, with the cancellation resulting from the definition of the $L^2$-orthogonal projector along with $\DIV\bvec{v}_T\in\Poly{k-1}(T)\subset\Poly{k}(T)$.
  Using H\"older and Cauchy--Schwarz inequalities along with the fact that $\norm{\bvec{L}^\infty(F;\Real^d)}{\normal_F} \le 1$, we obtain
  \begin{align}
    \term(T)
    \lesssim{}&
    h_T^{\nicefrac12} \norm{L^2(\partial T)}{q - \lproj{k}{T}q}
    \left(
    \frac{1}{h_T}\sum_{F\in\FT}\norm{\bvec{L}^2(F;\Real^d)}{\bvec{v}_T - \bvec{v}_F}^2
    \right)^{\nicefrac12}
    \nonumber\\
    \lesssim{}&
    h_T^{r+1}\seminorm{H^{r+1}(T)}{q}\norm{\stokes,T}{\uvec{v}_T}
    =
    \mu_T^{-\nicefrac12}\tv{\Cf{T}<1}^{\nicefrac12}
    h_T^{r+1}\seminorm{H^{r+1}(T)}{q}~\mu_T^{\nicefrac12}\norm{\stokes,T}{\uvec{v}_T},
    \label{eq:termT.estimate:CfT<1}
  \end{align}
  where we have used
  the approximation properties of $\lproj{k}{T}$ and \eqref{eq:seminorm.1.T:lower.bound} to pass to the second line.

  If $\Cf{T}\ge 1$, on the other hand, we have $\tPDT = \PDT$ (cf.\ \eqref{eq:tPDT}), so that
   \[
   \term(T)
   =
   \int_T\GRAD q\cdot\PDT\uvec{v}_T
   + \int_T\cancel{\lproj{k}{T}}\,q~\DT\uvec{v}_T
   - \sum_{F\in\FT}\omega_{TF}\int_Fq~(\bvec{v}_F\cdot\normal_F),
   \]
   where the cancellation of the projector follows from its definition.
   We next proceed as in \cite[Theorem~11]{Di-Pietro.Droniou:21*1}.
   The definition \eqref{eq:PDT} of $\PDT$ with $(q,\bvec{w})\gets(\lproj{k+1}{T}q,\bvec{0})$ gives
   \[
   \int_T\PDT\uvec{v}_T\cdot\GRAD \lproj{k+1}{T}q + \int_T\DT\uvec{v}_T~\lproj{k+1}{T}q
   - \sum_{F\in\FT}\omega_{TF}\int_F(\bvec{v}_F\cdot\normal_F)~\lproj{k+1}{T}q=0.
   \]
   Subtracting this quantity from $\term(T)$ and rearranging the terms yields
   \begin{multline*}
     \term(T)
     = \int_T\GRAD(q - \lproj{k+1}{T}q)\cdot\PDT\uvec{v}_T
     \\
     + \cancel{\int_T (q - \lproj{k+1}{T}q)~\DT\uvec{v}_T}
     + \sum_{F\in\FT}\omega_{TF}\int_F(\lproj{k+1}{T}q - q)~(\bvec{v}_F\cdot\normal_F),
   \end{multline*}
   where the cancellation comes from the definition of $\lproj{k+1}{T}$ together with $\DT\uvec{v}_T\in\Poly{k}(T)\subset\Poly{k+1}(T)$.
   Applying Cauchy--Schwarz and H\"older inequalities, we go on writing
   \begin{multline*}
     \term(T)
     \lesssim\left(
     \norm{\bvec{L}^2(T;\Real^d)}{\GRAD(q - \lproj{k+1}{T}q)}^2
     + h_T^{-1} \norm{L^2(\partial T)}{q - \lproj{k+1}{T}q}^2
     \right)^{\nicefrac12}
     \\
     \qquad\times
     \left(
     \norm{\bvec{L}^2(T;\Real^2)}{\PDT\uvec{v}_T}^2
     + h_T\sum_{F\in\FT}\norm{\bvec{L}^2(F;\Real^d)}{\bvec{v}_F\cdot\normal_F}^2
     \right)^{\nicefrac12}.
   \end{multline*}
   Finally, using the approximation properties of $\lproj{k+1}{T}$ for the first factor and noticing that the second factor is $\lesssim\norm{\darcy,T}{\uvec{v}_T}$ (this estimate is analogous to the one of the second factor in \cite[Eq.~(6.37)]{Di-Pietro.Droniou:21*1}, and can easily be derived from the definition \eqref{eq:a.darcy.T} of $a_{\darcy,T}$ by triangle and discrete trace inequalities), we get   
   \begin{equation}\label{eq:termT.estimate:CfT>=1}
     \term(T)
     \lesssim h_T^{r+1}\seminorm{H^{r+2}(T)}{q}~\norm{\darcy,T}{\uvec{v}_T}
     \le \nu_T^{-\nicefrac12}\tv{\Cf{T}\ge 1}^{\nicefrac12}
     h_T^{r+1}\seminorm{H^{r+2}(T)}{q}~\nu_T^{\nicefrac12}\norm{\darcy,T}{\uvec{v}_T},
   \end{equation}
   where we have additionally noticed that $\Cf{T}\ge 1$ implies $\nu_T>0$.

   To conclude, we plug \eqref{eq:termT.estimate:CfT<1} and \eqref{eq:termT.estimate:CfT>=1} into \eqref{eq:Errc:estimate:basic},
   use a Cauchy--Schwarz inequality on the sum over $T\in\Th$,
   recall the definition \eqref{eq:norm.mu.nu.h} of the $\norm{\mu,\nu,h}{{\cdot}}$-norm,
  and pass to the supremum after dividing by $\norm{\mu,\nu,h}{\uvec{v}_h}$.
\end{proof}

\begin{remark}[Discretisation of the source term]\label{rem:disc.source}
  The use of $\PDT$ in the discretisation of the source term when $\Cf{T}\ge 1$ (see \eqref{eq:discrete:variational} and \eqref{eq:tPDT}) is crucial to ensure that, in this case, the consistency error of the coupling bilinear form can be bounded from above using the Darcy norm instead of the Stokes norm; compare \eqref{eq:termT.estimate:CfT>=1} and \eqref{eq:termT.estimate:CfT<1}. This bound is  key to establishing an error estimate in $h^{r+1}$ that remains robust in the Darcy limit. 
\end{remark}

\subsubsection{Consistency of the forcing term linear form}

The following lemma estimates the difference between the standard HHO right-hand side linear form and the one obtained, as in \eqref{eq:strong:momentum}, using $\tPDT\uvec{v}_T$ instead of $\bvec{v}_T$ as a test function.

\begin{lemma}[Consistency of the forcing term]
  For any $\bvec{\varphi}\in\bvec{L}^2(\Omega;\Real^d)$, define the right-hand side consistency error linear form $\Errrhs(\bvec{\varphi};\cdot):\Uh\to\Real$ such that, for all $\uvec{v}_h\in\Uh$,
  \begin{equation}\label{eq:Errrhs}
    \Errrhs(\bvec{\varphi};\uvec{v}_h)\coloneq
    \sum_{T\in\Th}\int_T\bvec{\varphi}\cdot(\bvec{v}_T - \tPDT\uvec{v}_T).
  \end{equation}
  Further assuming $\bvec{\varphi}\in\bvec{H}^r(\Th;\Real^d)$ for some $r\in\{0,\ldots,k\}$, it holds
  \begin{equation}\label{eq:estimate:Errrhs}
    \norm{\mu,\nu,h,*}{\Errrhs(\bvec{\varphi};\cdot)}
    \lesssim\left(
    \sum_{T\in\Th}\mu_T^{-1}\min(1,\Cf{T}^{-1}) h_T^{2(r+1)}\seminorm{\bvec{H}^r(T;\Real^d)}{\bvec{\varphi}}^2
    \right)^{\nicefrac12}.
  \end{equation}
\end{lemma}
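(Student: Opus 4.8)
The plan is to first notice that the cut-off factor in the definition \eqref{eq:tPDT} of $\tPDT$ makes $\Errrhs$ supported on the Darcy-dominated elements only: if $\Cf{T}<1$ then $\tPDT\uvec{v}_T=\bvec{v}_T$ and the element integrand in \eqref{eq:Errrhs} vanishes, while if $\Cf{T}\ge 1$ then $\bvec{v}_T-\tPDT\uvec{v}_T=\bvec{v}_T-\PDT\uvec{v}_T$. Hence, for any $\uvec{v}_h\in\UhD\setminus\{\uvec{0}\}$,
\[
\Errrhs(\bvec{\varphi};\uvec{v}_h)=\sum_{\substack{T\in\Th\\\Cf{T}\ge 1}}\int_T\bvec{\varphi}\cdot(\bvec{v}_T-\PDT\uvec{v}_T),
\]
and it suffices to estimate each summand. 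The key observation is the orthogonality property \eqref{eq:lproj.k-1.PDT}, which gives $\vlproj{k-1}{T}(\bvec{v}_T-\PDT\uvec{v}_T)=\bvec{0}$; since $r\le k$, the polynomial $\vlproj{r-1}{T}\bvec{\varphi}$ (understood to be $\bvec{0}$ when $r=0$) lies in $\vPoly{k-1}(T;\Real^d)$, so it may be subtracted for free, yielding
\[
\int_T\bvec{\varphi}\cdot(\bvec{v}_T-\PDT\uvec{v}_T)=\int_T(\bvec{\varphi}-\vlproj{r-1}{T}\bvec{\varphi})\cdot(\bvec{v}_T-\PDT\uvec{v}_T)\le\norm{\bvec{L}^2(T;\Real^d)}{\bvec{\varphi}-\vlproj{r-1}{T}\bvec{\varphi}}\,\norm{\bvec{L}^2(T;\Real^d)}{\bvec{v}_T-\PDT\uvec{v}_T}.
\]

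Next I would bound the two factors separately. For the first, approximation properties of the $L^2$-orthogonal projector (cf.\ \cite[Chapter~1]{Di-Pietro.Droniou:20}; trivial when $r=0$) give $\norm{\bvec{L}^2(T;\Real^d)}{\bvec{\varphi}-\vlproj{r-1}{T}\bvec{\varphi}}\lesssim h_T^r\seminorm{\bvec{H}^r(T;\Real^d)}{\bvec{\varphi}}$. For the second, I would note that the element component of $\uvec{v}_T-\IT\PDT\uvec{v}_T$ is exactly $\bvec{v}_T-\PDT\uvec{v}_T$ (since $\PDT\uvec{v}_T\in\vPoly{k}(T;\Real^d)$), so by the definition \eqref{eq:prod.U} of $(\cdot,\cdot)_{\bvec{U},T}$ and $1\lesssim\regT$ we have $\norm{\bvec{L}^2(T;\Real^d)}{\bvec{v}_T-\PDT\uvec{v}_T}\lesssim\norm{\bvec{U},T}{\uvec{v}_T-\IT\PDT\uvec{v}_T}$, and since $\min(1,\Cf{T})=1$ when $\Cf{T}\ge 1$, the definition \eqref{eq:a.darcy.T} of $a_{\darcy,T}$ gives $\norm{\bvec{U},T}{\uvec{v}_T-\IT\PDT\uvec{v}_T}\le\norm{\darcy,T}{\uvec{v}_T}$. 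Combining, $\int_T\bvec{\varphi}\cdot(\bvec{v}_T-\PDT\uvec{v}_T)\lesssim h_T^r\seminorm{\bvec{H}^r(T;\Real^d)}{\bvec{\varphi}}\,\norm{\darcy,T}{\uvec{v}_T}$.

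The last step is the algebraic manipulation that converts the weight from the Darcy to the Stokes scaling: since $\Cf{T}\ge1$, one has $\Cf{T}^{-1}=\mu_T/(\nu_Th_T^2)=\min(1,\Cf{T}^{-1})$, so $\nu_T^{-\nicefrac12}h_T^r=\mu_T^{-\nicefrac12}\min(1,\Cf{T}^{-1})^{\nicefrac12}h_T^{r+1}$, whence
\[
\int_T\bvec{\varphi}\cdot(\bvec{v}_T-\PDT\uvec{v}_T)\lesssim\mu_T^{-\nicefrac12}\min(1,\Cf{T}^{-1})^{\nicefrac12}h_T^{r+1}\seminorm{\bvec{H}^r(T;\Real^d)}{\bvec{\varphi}}\,\nu_T^{\nicefrac12}\norm{\darcy,T}{\uvec{v}_T}.
\]
Summing over $T$ with $\Cf{T}\ge 1$, a discrete Cauchy--Schwarz inequality then bounds $\Errrhs(\bvec{\varphi};\uvec{v}_h)$ by the square root of the right-hand side of \eqref{eq:estimate:Errrhs} times $\big(\sum_{T\in\Th}\nu_T\norm{\darcy,T}{\uvec{v}_T}^2\big)^{\nicefrac12}=\norm{\nu,h}{\uvec{v}_h}\le\norm{\mu,\nu,h}{\uvec{v}_h}$ (recall \eqref{eq:a.nu}, \eqref{eq:norm.nu.darcy}, \eqref{eq:norm.mu.nu.h}); dividing by $\norm{\mu,\nu,h}{\uvec{v}_h}$ and passing to the supremum gives \eqref{eq:estimate:Errrhs}. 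The only genuinely non-routine point is recognising that the extra factor $h_T$ (beyond the naive $h_T^r$ from Cauchy--Schwarz) must be extracted from the orthogonality \eqref{eq:lproj.k-1.PDT} of the Darcy potential, and that the $\nu_T^{-1}$ produced this way is precisely the Stokes-type weight $\mu_T^{-1}\min(1,\Cf{T}^{-1})h_T^{-2}$ in the Darcy regime; everything else is bookkeeping.
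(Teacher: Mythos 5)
Your proof is correct and follows essentially the same route as the paper's: split on the sign of $\Cf{T}-1$, exploit the orthogonality \eqref{eq:lproj.k-1.PDT} to subtract a projection of $\bvec{\varphi}$, bound $\norm{\bvec{L}^2(T;\Real^d)}{\bvec{v}_T-\PDT\uvec{v}_T}$ by $\norm{\darcy,T}{\uvec{v}_T}$, and convert the weight via $\Cf{T}^{-1}=\min(1,\Cf{T}^{-1})$. The only (immaterial) difference is that you subtract $\vlproj{r-1}{T}\bvec{\varphi}$ where the paper subtracts $\vlproj{k-1}{T}\bvec{\varphi}$; both lie in $\vPoly{k-1}(T;\Real^d)$ so the orthogonality applies either way and the same $h_T^r$ approximation rate results.
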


\begin{proof}
  Denote by $\term(T)$ the argument of the summation in \eqref{eq:Errrhs}.
  If $\Cf{T}<1$, the definition \eqref{eq:tPDT} of $\tPDT$ yields $\term(T) = 0$.
  Consider now the case $\Cf{T}\ge 1$ (which implies, in particular, $\nu_T>0$). We first notice that, letting $\vlproj{k-1}{T}\bvec{\varphi} \coloneq \bvec{0}$ if $k=0$,
  \begin{equation}\label{eq:approx.pik}
    \norm{\bvec{L}^2(T;\Real^d)}{\bvec{\varphi}-\vlproj{k-1}{T}\bvec{\varphi}}\lesssim h_T^r\seminorm{\bvec{H}^r(T;\Real^d)}{\bvec{\varphi}},
  \end{equation}
  where the result is trivial if $k=0$ (which imposes $r=0$) and otherwise follows from the approximation properties of $\vlproj{k-1}{T}$, see \cite[Theorem 1.45]{Di-Pietro.Droniou:20}. Recalling that, for $\Cf{T}\ge 1$, we have $\tPDT = \PDT$ by \eqref{eq:tPDT} and invoking \eqref{eq:lproj.k-1.PDT} (which trivially holds also for $k=0$), we then write
  \[
  \begin{aligned}
    \term(T)
    &= \int_T(\bvec{\varphi} - \vlproj{k-1}{T}\bvec{\varphi})\cdot(\bvec{v}_T - \PDT\uvec{v}_T)
    \\
    &\le\norm{\bvec{L}^2(T;\Real^d)}{\bvec{\varphi} - \vlproj{k-1}{T}\bvec{\varphi}}
    \norm{\bvec{L}^2(T;\Real^d)}{\bvec{v}_T - \PDT\uvec{v}_T}
    \\
    &\lesssim
    \mu_T^{-\nicefrac12} h_T^r \seminorm{\bvec{H}^r(T;\Real^d)}{\bvec{\varphi}}
    ~h_T\mu_T^{\nicefrac12}h_T^{-1}\norm{\darcy,T}{\uvec{v}_T}
    \\
    &=
    \mu_T^{-\nicefrac12} h_T^{r+1} \seminorm{\bvec{H}^r(T;\Real^d)}{\bvec{\varphi}}
    ~\nu_T^{\nicefrac12}\min(1,\Cf{T}^{-1})^{\nicefrac12}\norm{\darcy,T}{\uvec{v}_T},
  \end{aligned}
  \]
  where we have used Cauchy--Schwarz inequalities in the first inequality,
  the approximation properties \eqref{eq:approx.pik} of the $L^2$-orthogonal projector for the first factor together with the definitions \eqref{eq:norm.U} and \eqref{eq:norm.nu.darcy} of $\norm{\bvec{U},T}{{\cdot}}$ and $\norm{\darcy,T}{{\cdot}}$ to write $\norm{\bvec{L}^2(T;\Real^d)}{\bvec{v}_T - \PDT\uvec{v}_T}\le\norm{\bvec{U},T}{\uvec{v}_T-\IT\PDT\uvec{v}_T}\le\norm{\darcy,T}{\uvec{v}_T}$ in the second inequality,
  while the conclusion follows from the definition \eqref{eq:CfT} of $\Cf{T}$ along with $\Cf{T}^{-1} = \min(1,\Cf{T}^{-1})$.
  Using the above estimate in \eqref{eq:Errrhs}, applying a Cauchy--Schwarz inequality on the sum over $T\in\Th$, and recalling the definition \eqref{eq:norm.mu.nu.h} of $\norm{\mu,\nu,h}{{\cdot}}$, \eqref{eq:estimate:Errrhs} follows.
\end{proof}

\subsubsection{Proof of Theorem \ref{thm:error.estimate}}

\begin{proof}[Proof of Theorem \ref{thm:error.estimate}]
  Since $a_{\mu,h}+a_{\nu,h}$ is $1$-coercive and has norm $1$ for the $\norm{\mu,\nu,h}{{\cdot}}$ norm, Lemma~\ref{lem:inf-sup} and \cite[Lemma A.11]{Di-Pietro.Droniou:20} show that $\mathcal{A}_h$ is $\gamma$-inf-sup stable for the norm in the left-hand side of \eqref{eq:error.estimate}. Hence, in the spirit of the third Strang lemma \cite{Di-Pietro.Droniou:18}, this error estimate follows if we bound the consistency error by the bracketed term in the right-hand side.
  The consistency error for the scheme \eqref{eq:discrete:variational} is
  \begin{align}
    \Err(\bvec{u},p;\uvec{v}_h)
    &\coloneq
    \sum_{T\in\Th}\int_T\bvec{f}\cdot\tPDT\uvec{v}_T + \int_\Omega gq_h
    -\mathcal{A}_h((\Ih\bvec{u},\lproj{h}{k}p),(\uvec{v}_h,q_h))    
    \nonumber\\
    &=
    \sum_{T\in\Th}\int_T\VDIV(\mu_T\GRAD\bvec{u})\cdot(\bvec{v}_T - \tPDT\uvec{v}_T)
    -
    \sum_{T\in\Th}\int_T\VDIV(\mu_T\GRAD\bvec{u})\cdot\bvec{v}_T - a_{\mu,h}(\Ih\bvec{u},\uvec{v}_h)
    \nonumber\\
    &\quad
    + \sum_{T\in\Th}\int_T\nu_T\bvec{u}\cdot\tPDT\uvec{v}_T
    - a_{\nu,h}(\Ih\bvec{u},\uvec{v}_h)
    + \sum_{T\in\Th}\int_T\GRAD p\cdot\tPDT\uvec{v}_T
    - b_h(\uvec{v}_h,\lproj{k}{h}p)
    \nonumber\\
    &\quad
    + \cancel{%
      \int_\Omega g q_h
      + b_h(\Ih\bvec{u}, q_h)%
    }
    \nonumber\\
    &=
    \Errrhs(\VDIV(\mu\GRAD\bvec{u});\uvec{v}_h)
    + \Errsto(\bvec{u};\uvec{v}_h)
    + \Errdar(\bvec{u};\uvec{v}_h)
    + \Errc(p;\uvec{v}_h),
  \label{eq:error.estimate:basic}
  \end{align}
  where we have we have replaced $\bvec{f}$ with the left-hand side of \eqref{eq:strong:momentum}, expanded $\mathcal{A}_h$ according to its definition \eqref{eq:Ah}, and used \eqref{eq:consistency:bh} along with \eqref{eq:strong:mass} to cancel the last term in the first passage,
  and concluded using the definitions of the consistency errors, i.e.: \eqref{eq:Errrhs} with $\bvec{\varphi} = \VDIV(\mu\GRAD\bvec{u})$, \eqref{eq:Errsto} and \eqref{eq:Errdar} with $\bvec{w} = \bvec{u}$, and \eqref{eq:Errc} with $q = p$.

  Using, respectively, \eqref{eq:estimate:Errrhs}
  (further noticing that $\seminorm{\bvec{H}^r(T;\Real^d)}{\VDIV(\mu_T\GRAD\bvec{u})}\lesssim \mu_T\seminorm{\bvec{H}^{r+2}(T;\Real^d)}{\bvec{u}}$ for all $T\in\Th$),
  \eqref{eq:estimate:Errsto},
  \eqref{eq:estimate:Errdar},
  and \eqref{eq:estimate:Errc} to estimate the terms in the right-hand side of \eqref{eq:error.estimate:basic}, the result follows.
\end{proof}

\section*{Acknowledgements}

This research received support from the ANR ``NEMESIS'' (ANR-20-MRS2-0004) and the Australian Research Council's Discovery Projects funding scheme (DP210103092). The authors would also like to thank Ricardo Ruiz-Baier for sharing Gmsh geometry files at the source of the tests in Section \ref{sec:lid.cavity}.


\printbibliography

\end{document}